\newtheorem{definition}{Definition}[section]
\newtheorem{theorem}[definition]{Theorem}
\newtheorem{lemma}[definition]{Lemma}
\newtheorem{corollary}[definition]{Corollary}
\newtheorem{note}[definition]{Note}
\newtheorem{notation}[definition]{Notation}
\def\N{\mathbb N}
\def\Z{\mathbb Z}
\def\K{\mathbb K}
\def\A{\mathcal A}
\renewcommand{\tilde}{\widetilde}
\renewcommand{\epsilon}{\varepsilon}
\numberwithin{equation}{section}
\newcommand{\cmark}{\ding{51}}%
\newcommand{\xmark}{\ding{55}}%
\begin{document}
\title{\bf
The positive even subalgebra of $U_q(\mathfrak{sl}_2)$ and its finite-dimensional irreducible modules}
\author{
Alison Gordon Lynch}
\date{}

\maketitle
\begin{abstract}
The equitable presentation of $U_q(\mathfrak{sl}_2)$ was introduced in 2006 by Ito, Terwilliger, and Weng. This presentation involves some generators $x, y, y^{-1}, z$. It is known that $\{x^r y^s z^t : r, t \in \mathbb{N}, s \in \mathbb{Z}\}$ is a basis for the $\K$-vector space $U_q(\mathfrak{sl}_2)$.  In 2013, Bockting-Conrad and Terwilliger introduced a subalgebra $\mathcal{A}$ of $U_q(\mathfrak{sl}_2)$ spanned by the elements $\{x^r y^s z^t : r, s, t \in \mathbb{N}, r+s+t \ {\rm even}\}$.  We give a presentation of $\A$ by generators and relations. We also classify up to isomorphism the finite-dimensional irreducible $\A$-modules, under the assumption that $q$ is not a root of unity.

\bigskip
\noindent
{\bf Keywords}.
Quantum group, quantum algebra
\hfil\break
\noindent {\bf 2010 Mathematics Subject Classification}.
Primary: 17B37.
 \end{abstract}

\section{Introduction}
Throughout this paper, let $\K$ denote a field and let $q$ denote a nonzero scalar in $\K$ such that $q^2 \ne 1$.
\medskip
We recall the quantum algebra $U_q(\mathfrak{sl}_2)$.  We will use the equitable presentation for $U_q(\mathfrak{sl}_2)$, which was introduced in \cite{Ito2006}. By \cite[Theorem 2.1]{Ito2006}, the equitable presentation of $U_q(\mathfrak{sl}_2)$ has generators $x, y^{\pm 1}, z$ and relations $y y^{-1} = 1, y^{-1} y = 1$,
\begin{equation*}
    \frac{qxy-q^{-1}yx}{q-q^{-1}} = 1, \qquad \frac{qyz-q^{-1}zy}{q-q^{-1}} = 1, \qquad \frac{qzx-q^{-1}xz}{q-q^{-1}} = 1.
\end{equation*}

\medskip
 The equitable presentation for $U_q(\mathfrak{sl}_2)$ has connections with Leonard pairs \cite{Alnajjar2011}, Leonard triples \cite{Gao2013},\cite{Huang2012}, tridiagonal pairs \cite{Bockting-Conrad2014}, bidiagonal pairs \cite{Funk-Neubauer2013}, the $q$-tetrahedron algebra \cite{Ito2014},\cite{Ito2007}, the universal Askey-Wilson algebra \cite{Terwilliger2011}, Poisson algebras \cite{Jordan2010}, billiard arrays \cite{Terwilliger2014}, and distance-regular graphs \cite{Worawannotai2013}.

\medskip
By \cite[Lemma 10.7]{Terwilliger2011}, the $\K$-vector space $U_q(\mathfrak{sl}_2)$ has a basis
\begin{equation*}
    x^r y^s z^t \qquad r, t \in \N, \ s \in \Z.
\end{equation*}

We consider the subalgebra of $U_q(\mathfrak{sl}_2)$ spanned by the elements
\begin{equation*}
    x^r y^s z^t \qquad r, s, t \in \N, \ r+s+t \text{ even}.
\end{equation*}
This subalgebra was first discussed in \cite{Bockting-Conrad2013}. We call this subalgebra the {\it positive even subalgebra} of $U_q(\mathfrak{sl}_2)$, and denote it by $\A$.

\medskip
In this paper, we obtain two main results. In the first result, we give a presentation for $\A$ by generators and relations. In the second result, we classify up to isomorphism the finite-dimensional irreducible $\A$-modules, under the assumption that $q$ is not a root of unity.

\medskip
We now describe our first result in detail. We consider the elements $\nu_x, \nu_y, \nu_z$ of $U_q(\mathfrak{sl}_2)$, defined by
\begin{align*}
\nu_x = q(1-yz), \qquad \nu_y = q(1-zx), \qquad \nu_z = q(1-xy).
\end{align*}
By \cite[Proposition 5.4]{Bockting-Conrad2013}, the elements $\nu_x, \nu_y, \nu_z$ generate $\A$.
We also consider the elements $x^2, y^2, z^2$ of $U_q(\mathfrak{sl}_2)$. We show that $x^2, y^2, z^2$ generate $\A$ provided that $q^4 \ne 1$. We obtain some relations between $\nu_x, \nu_y, \nu_z$ and $x^2, y^2, z^2$. We also obtain some relations (\ref{eq:defrel1a})--(\ref{eq:defrel2f}) satisfied by $\nu_x, \nu_y, \nu_z$. We show that the unital associative $\K$-algebra with generators $\nu_x, \nu_y, \nu_z$ and relations (\ref{eq:defrel1a})--(\ref{eq:defrel2f}) is isomorphic to $\A$.

\medskip
We now describe our second result in detail. Assume that $q$ is not a root of unity. By \cite[Theorem 2.6]{Jantzen1996}, there exists a family of finite-dimensional irreducible $U_q(\mathfrak{sl}_2)$-modules
\begin{equation*}
    L(d, \epsilon), \qquad \epsilon \in \{1, -1\}, \ d \in \N.
\end{equation*}
For char $\K = 2$ we interpret the set $\{1, -1\}$ as $\{1\}$. By \cite[Theorem 2.6]{Jantzen1996}, every finite-dimensional irreducible $U_q(\mathfrak{sl}_2)$-module is isomorphic to exactly one of the modules $L(d, \epsilon)$. Let $d \in \N$. By restricting from $U_q(\mathfrak{sl}_2)$ to $\A$, each of $L(d,1)$ and $L(d,-1)$ becomes an $\A$-module. We show that the $\A$-modules $L(d,1)$ and $L(d,-1)$ are isomorphic and we denote the resulting $\A$-module by $L(d)$. We show that the $\A$-module $L(d)$ is irreducible.
We also show that each irreducible $\A$-module with dimension $d+1$ is isomorphic to $L(d)$.
Thus, we show that, up to isomorphism, $L(d)$ is the unique irreducible $\A$-module of dimension $d+1$.

% The Quantum Group Uq(sl2)
\section{The quantum algebra \texorpdfstring{$U_q(\mathfrak{sl}_2)$}{Uq(sl2)}}

In this section, we recall the quantum algebra $U_q(\mathfrak{sl}_2)$ and its equitable presentation.

\begin{definition}\rm
\cite[Definition 2.2]{Ito2006}
Let $U_q(\mathfrak{sl}_2)$ denote the unital associative $\K$-algebra with generators $x, y^{\pm 1}, z$ and the following relations:
\begin{align}
y y^{-1} = y^{-1}y &= 1,\\
\frac{qxy-q^{-1}yx}{q-q^{-1}} &= 1,\label{eq:equit1}\\
\frac{qyz-q^{-1}zy}{q-q^{-1}} &= 1,\label{eq:equit2}\\
\frac{qzx-q^{-1}xz}{q-q^{-1}} &= 1. \label{eq:equit3}
\end{align}
We call $x, y^{\pm 1}, z$ the {\it equitable generators} for $U_q(\mathfrak{sl}_2)$.
\end{definition}

\begin{lemma}\label{lem:reform}
The following relations hold in $U_q(\mathfrak{sl}_2)$:
\begin{align}
xy &= q^{-2}yx - q^{-2}+1, \qquad yx = q^2 xy - q^2 +1,\label{eq:flip1}\\
yz &= q^{-2}zy - q^{-2}+1, \qquad zy = q^2 yz - q^2 +1,\label{eq:flip2}\\
zx &= q^{-2}xz - q^{-2}+1, \qquad xz = q^2 zx - q^2 +1\label{eq:flip3}.
\end{align}
\end{lemma}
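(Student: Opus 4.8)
The plan is to derive all six relations directly from the three equitable relations \eqref{eq:equit1}--\eqref{eq:equit3} by elementary algebra, with no appeal to the basis of Section~1 or to any deeper structure. Since $q \ne 0$ and $q^2 \ne 1$ by the standing hypotheses, the scalar $q - q^{-1}$ is nonzero, so each equitable relation may legitimately be cleared of its denominator.

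First I would treat the pair \eqref{eq:flip1} involving $x$ and $y$. Multiplying \eqref{eq:equit1} through by $q - q^{-1}$ gives $qxy - q^{-1}yx = q - q^{-1}$. To obtain the left equation in \eqref{eq:flip1}, I multiply this identity by $q^{-1}$ and isolate $xy$, which yields $xy = q^{-2}yx - q^{-2} + 1$. To obtain the right equation, I instead multiply by $q$ and isolate $yx$, which yields $yx = q^2 xy - q^2 + 1$. Equivalently, the right equation can be read off from the left one by solving for $yx$, so the two are algebraically interchangeable.

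The remaining pairs \eqref{eq:flip2} and \eqref{eq:flip3} follow in exactly the same manner, starting from \eqref{eq:equit2} and \eqref{eq:equit3} respectively; each computation is word-for-word identical to the first after applying the cyclic substitution $x \to y \to z \to x$. I do not expect any genuine obstacle: the proof is purely computational, and the only points requiring attention are careful bookkeeping of the signs and of the powers $q^{\pm 2}$, together with the observation already noted that division by $q - q^{-1}$ is valid because $q^2 \ne 1$.
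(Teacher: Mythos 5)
Your proposal is correct and matches the paper's approach: the paper's proof simply states that \eqref{eq:flip1}--\eqref{eq:flip3} are reformulations of \eqref{eq:equit1}--\eqref{eq:equit3}, and you have carried out exactly that reformulation (clearing the denominator and scaling by $q^{\pm 1}$ to isolate each product). The computations check out, so no issues.
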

\begin{proof}
The equations (\ref{eq:flip1})--(\ref{eq:flip3}) are reformulations of (\ref{eq:equit1})--(\ref{eq:equit3}).
\end{proof}

Recall the natural numbers $\N = \{0, 1, 2, \ldots\}$ and the integers $\Z = \{0, \pm 1, \pm 2, \ldots\}$.
\begin{lemma}{\rm \cite[Lemma 10.7]{Terwilliger2011}}\label{lem:Uqsl2basis} The following is a basis for the $\K$-vector space $U_q(\mathfrak{sl}_2)$:
\begin{equation}\label{eq:Uqsl2basis}
    x^r y^s z^t \qquad r, t \in \N, \ s \in \Z.
\end{equation}
\end{lemma}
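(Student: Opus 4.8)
The plan is to establish two things: that the monomials $x^r y^s z^t$ ($r,t\in\N$, $s\in\Z$) span $U_q(\mathfrak{sl}_2)$ as a $\K$-vector space, and that they are linearly independent.

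For spanning, let $W$ denote the $\K$-span of the proposed basis. I would show $W$ is closed under left multiplication by each of $x$, $y$, $y^{-1}$, $z$; since $1 = x^0 y^0 z^0 \in W$ and $x, y^{\pm 1}, z$ generate $U_q(\mathfrak{sl}_2)$, this forces $W = U_q(\mathfrak{sl}_2)$. Closure under left multiplication by $x$ is immediate. For $y$, $y^{-1}$, $z$ one must push the new generator to its correct position in $x^r y^s z^t$, picking up error terms of strictly smaller $x$- or $y$-degree, and then conclude by induction on the relevant exponent. Concretely, from Lemma \ref{lem:reform} one obtains $yx^r = q^{2r}x^r y + (\text{polynomial in }x\text{ of degree}<r)$, and similarly $zx^r = q^{-2r}x^r z + (\text{degree}<r)$ and $zy^s = q^{2s}y^s z + (\text{lower }y\text{-degree})$ for $s\ge 0$; for the negative powers of $y$ one first derives $y^{-1}x = q^{-2}xy^{-1} + (1-q^{-2})y^{-2}$ and $zy^{-1} = q^{-2}y^{-1}z + (1-q^{-2})y^{-2}$ from the relations and iterates. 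Each of these is a routine induction.

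Linear independence is the substantive part. I would prove it with Bergman's Diamond Lemma applied to the reduction system given by the rewriting rules $yx\to q^2 xy - q^2 + 1$, $zy\to q^2 yz - q^2 + 1$, $zx\to q^{-2}xz - q^{-2} + 1$, together with $yy^{-1}\to 1$, $y^{-1}y\to 1$ (and the derived rules normalizing powers of $y$, e.g. $zy^{-1}\to q^{-2}y^{-1}z + (1-q^{-2})y^{-2}$), so that the reduction-free words are exactly the $x^r y^s z^t$. After fixing a semigroup well-ordering on words compatible with these reductions — taking some care because of the invertible generator $y$ — one checks that every overlap ambiguity is resolvable. The essential overlap is $zyx$ (reducing the $zy$ first versus the $yx$ first), and one verifies the two reductions converge to the same normal form; the remaining overlaps involve the $y$-inversion rules with the commutation rules and with one another, such as $zyy^{-1}$, $y^{-1}yx$, and $yy^{-1}y$. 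Once all overlaps resolve, the Diamond Lemma yields that the reduction-free words form a $\K$-basis, which is the claim. As an alternative to the Diamond Lemma, one could construct the $\K$-vector space $V$ with basis $\{v_{r,s,t}: r,t\in\N,\ s\in\Z\}$, define an action of $x,y^{\pm 1},z$ on $V$ (using the formulas already extracted in the spanning step) for which $x^r y^s z^t\cdot v_{0,0,0} = v_{r,s,t}$, verify that the defining relations of $U_q(\mathfrak{sl}_2)$ hold on $V$, and conclude that the $x^r y^s z^t$ are linearly independent because they send $v_{0,0,0}$ to distinct basis vectors.

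The main obstacle is the linear-independence step, and within it the bookkeeping forced by the invertible generator $y$: the standard Diamond Lemma is stated for free monoids, so one must either adapt it to the presence of $y^{-1}$ or first reduce to a free-monoid situation (for instance by passing through the subalgebra generated by $x,y,z$, which is an honest quotient of a free algebra, and then localizing at the Ore element $y$). Modulo that setup, the overlap computations and the spanning induction are routine.
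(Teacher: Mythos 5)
The paper does not actually prove this lemma: it is imported wholesale from \cite[Lemma 10.7]{Terwilliger2011}, so there is no internal argument to measure your proposal against. Taken on its own terms, your outline is correct and would give a self-contained proof. The spanning step is fine; the commutation formulas you extract from Lemma \ref{lem:reform} (e.g.\ $yx^r=q^{2r}x^ry+(1-q^{2r})x^{r-1}$, $y^{-1}x=q^{-2}xy^{-1}+(1-q^{-2})y^{-2}$, $zy^{-1}=q^{-2}y^{-1}z+(1-q^{-2})y^{-2}$) all check out. For linear independence, the Diamond Lemma setup also works, and the complication you flag around the invertible generator is less serious than you suggest: Bergman's lemma applies directly to the free algebra on the four letters $x,y,y^{-1},z$ with $yy^{-1}\to 1$ and $y^{-1}y\to 1$ simply included among the reduction rules; the degree-lexicographic order induced by $y^{-1}<x<y<z$ is a semigroup order compatible with all seven rules (the only length-preserving outputs are $xy$, $xz$, $yz$, $xy^{-1}$, $y^{-1}z$, $y^{-2}$, each below the corresponding left-hand side), and the irreducible words are exactly the monomials (\ref{eq:Uqsl2basis}). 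There are eight overlap ambiguities and no inclusion ambiguities; the essential one, $zyx$, resolves --- both reduction orders yield $q^2xyz+(1-q^2)x+(q^2-1)y+(1-q^2)z$ --- and those involving $y^{\pm1}$ are routine, so no localization or Ore-extension detour is needed. What your write-up still owes is the explicit verification of the remaining overlaps; as a sketch it is sound, and it is genuinely more self-contained than the paper, which simply cites the result.
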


%The Elements \nu_x, \nu_y, \nu_z
\section{The elements \texorpdfstring{$\nu_x, \nu_y, \nu_z$}{nu x, nu y, nu z}}\label{sec:nus}
In this section, we recall the elements $\nu_x, \nu_y, \nu_z$ of $U_q(\mathfrak{sl}_2)$ and we give relations involving $\nu_x, \nu_y, \nu_z$ and $x^2, y^2, z^2$.

\medskip
The relations (\ref{eq:equit1})--(\ref{eq:equit3}) can be reformulated as follows:
\[
q(1-yz) = q^{-1}(1-zy), \qquad q(1-zx) = q^{-1}(1-xz), \qquad q(1-xy) = q^{-1}(1-yx).
\]
\begin{definition}{\rm \cite[Definition 3.1]{Terwilliger2011}}\rm \label{def:nu}
Let $\nu_x, \nu_y, \nu_z$ denote the following elements of $U_q(\mathfrak{sl}_2)$:
\begin{align}
\nu_x &= q(1-yz) = q^{-1}(1-zy),\label{eq:nux}\\
\nu_y &= q(1-zx) = q^{-1}(1-xz),\label{eq:nuy}\\
\nu_z &= q(1-xy) = q^{-1}(1-yx). \label{eq:nuz}
\end{align}
\end{definition}

\begin{lemma}{\rm \cite[Lemma 3.3]{Terwilliger2011}}\label{lem:rel1}
The following relations hold in $U_q(\mathfrak{sl}_2)$:
\begin{align}
xy &= 1 - q^{-1}\nu_z, \qquad yx = 1-q\nu_z,\label{eq:pairs1}\\
yz &= 1 - q^{-1}\nu_x, \qquad zy = 1-q\nu_x,\label{eq:pairs2}\\
zx &= 1 - q^{-1}\nu_y, \qquad xz = 1-q\nu_y.\label{eq:pairs3}
\end{align}
\end{lemma}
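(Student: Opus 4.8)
The plan is to derive each of the six identities directly from the two expressions for the corresponding element $\nu_x$, $\nu_y$, $\nu_z$ recorded in Definition \ref{def:nu}; no structural input beyond those definitions is needed. Start with (\ref{eq:nuz}). The left equality $\nu_z = q(1-xy)$ multiplies through by $q^{-1}$ to give $q^{-1}\nu_z = 1 - xy$, which rearranges to $xy = 1 - q^{-1}\nu_z$. Similarly, the right equality $\nu_z = q^{-1}(1-yx)$ multiplies through by $q$ to give $q\nu_z = 1 - yx$, hence $yx = 1 - q\nu_z$. Together these are exactly (\ref{eq:pairs1}).

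Next I would repeat this verbatim for the other two elements. Solving (\ref{eq:nux}) for $yz$ and $zy$ yields $yz = 1 - q^{-1}\nu_x$ and $zy = 1 - q\nu_x$, which is (\ref{eq:pairs2}); solving (\ref{eq:nuy}) for $zx$ and $xz$ yields $zx = 1 - q^{-1}\nu_y$ and $xz = 1 - q\nu_y$, which is (\ref{eq:pairs3}). The only thing being used is that $q$ is invertible, which holds by the standing assumption that $q$ is a nonzero scalar.

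There is no genuine obstacle here: the lemma is a cosmetic reformulation of Definition \ref{def:nu}, obtained by solving the defining equations for the monomials $xy$, $yx$, $yz$, $zy$, $zx$, $xz$. If a sanity check were wanted, one could observe that the relations (\ref{eq:flip1})--(\ref{eq:flip3}) of Lemma \ref{lem:reform} are recovered by substituting (\ref{eq:pairs1})--(\ref{eq:pairs3}) into one another and eliminating the $\nu$'s, but this consistency check is not part of the proof and I would omit it.
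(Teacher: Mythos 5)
Your proposal is correct and matches the paper's proof, which simply states that these equations are reformulations of (\ref{eq:nux})--(\ref{eq:nuz}); you have merely spelled out the elementary rearrangement (multiplying by $q^{\pm 1}$ and solving for the monomial) that the paper leaves implicit. No further comment is needed.
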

\begin{proof}
These equations are reformulations of (\ref{eq:nux})--(\ref{eq:nuz}).
\end{proof}

\begin{lemma}{\rm \cite[Lemma 3.5]{Terwilliger2011}}\label{lem:rel2a}
The following relations hold in $U_q(\mathfrak{sl}_2)$:
\begin{align*}
x \nu_y &= q^2 \nu_y x, \qquad x^2\nu_z = q^{-2}\nu_z x, \\
y \nu_z &= q^2 \nu_z y, \qquad y^2\nu_x = q^{-2}\nu_x y, \\
z \nu_x &= q^2 \nu_x z, \qquad z^2\nu_y = q^{-2}\nu_y z.
\end{align*}
\end{lemma}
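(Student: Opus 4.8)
The plan is to verify each of the six relations by a direct computation: substitute the definitions (\ref{eq:nux})--(\ref{eq:nuz}) of $\nu_x,\nu_y,\nu_z$, and then reorder the resulting products in $x,y,z$ using the flip relations of Lemma~\ref{lem:reform}. Every such derivation uses only the flip relations (\ref{eq:flip1})--(\ref{eq:flip3}) together with the definitions (\ref{eq:nux})--(\ref{eq:nuz}), all of which are invariant under the cyclic relabelling $(x,y,z,\nu_x,\nu_y,\nu_z)\mapsto(y,z,x,\nu_y,\nu_z,\nu_x)$. Consequently the six relations split into two cyclic orbits --- the ``$q^2$'' column $\{x\nu_y,\,y\nu_z,\,z\nu_x\}$ and the other column --- so it suffices to establish one representative from each column and relabel for the remaining four.

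For the first column I would prove $x\nu_y = q^2\nu_y x$. Substituting $\nu_y = q(1-zx)$ gives $x\nu_y = q(x - xzx)$ and $\nu_y x = q(x - zx^2)$. Rewriting $xzx$ by means of $xz = q^2zx - q^2 + 1$ from (\ref{eq:flip3}) yields $xzx = q^2zx^2 - q^2 x + x$, and the stray terms cancel to leave $x\nu_y = q^3(x - zx^2) = q^2\nu_y x$. The relations $y\nu_z = q^2\nu_z y$ and $z\nu_x = q^2\nu_x z$ then follow by cyclic relabelling.

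For the second column I would work directly with the expression $x^2\nu_z$ as written. Substituting $\nu_z = q(1-xy)$ gives $x^2\nu_z = q(x^2 - x^3 y)$, while two applications of $yx = q^2 xy - q^2 + 1$ from (\ref{eq:flip1}) give $\nu_z x = q^3(x - x^2 y)$ and $\nu_z x^2 = q^5(x^2 - x^3 y)$. At this point the main obstacle appears, and it is one of bookkeeping rather than of genuine difficulty: expanded in the basis of Lemma~\ref{lem:Uqsl2basis}, the element $x^2\nu_z$ has a nonzero $x^3 y$ component, whereas $q^{-2}\nu_z x = q(x - x^2 y)$ has none, so by linear independence $x^2\nu_z = q^{-2}\nu_z x$ cannot hold once $q^2\neq 1$. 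The degrees instead force the balanced identity $x^2\nu_z = q^{-4}\nu_z x^2$, whose degree-one core is $x\nu_z = q^{-2}\nu_z x$, the right-column relation of \cite[Lemma~3.5]{Terwilliger2011}; indeed, applying $x\nu_z = q^{-2}\nu_z x$ twice recovers $x^2\nu_z = q^{-4}\nu_z x^2$ at once. I would therefore record the second column in this balanced (equivalently, linear) normalization and obtain $y^2\nu_x = q^{-4}\nu_x y^2$ and $z^2\nu_y = q^{-4}\nu_y z^2$ by cyclic relabelling.

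In summary, the only delicate point is fixing the correct normalization of the second column: matching a squared generator on the left against a power of the generator on the right is precisely what forces the degree count above and pins down the $q$-power ($q^{-4}$ in the squared form, $q^{-2}$ in the linear form). Once this is settled, every individual step is a single substitution followed by one application of a flip relation, and the cyclic symmetry disposes of four of the six cases.
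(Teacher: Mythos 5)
There is no proof in the paper to compare against: Lemma \ref{lem:rel2a} is quoted from \cite[Lemma 3.5]{Terwilliger2011} with no argument given, so your direct verification supplies something the paper omits, and it is correct. The most valuable part of your write-up is the diagnosis of the right-hand column: as printed, $x^2\nu_z = q^{-2}\nu_z x$, $y^2\nu_x = q^{-2}\nu_x y$, $z^2\nu_y = q^{-2}\nu_y z$ are typographical corruptions of the linear relations $x\nu_z = q^{-2}\nu_z x$, $y\nu_x = q^{-2}\nu_x y$, $z\nu_y = q^{-2}\nu_y z$, and your basis argument pins this down: $x^2\nu_z = q(x^2 - x^3y)$ has a nonzero $x^3y$-coefficient in the basis of Lemma \ref{lem:Uqsl2basis}, while $q^{-2}\nu_z x = q(x - x^2y)$ does not. (In fact the printed identity fails for every $q \ne 0$, so your qualifier ``once $q^2 \ne 1$'' is weaker than what your own computation shows, though harmless under the paper's standing hypothesis.) Your corrected normalization is also the only one consistent with the paper's internal use of the lemma: the proof of Lemma \ref{lem:rel2} reads ``Immediate from Lemma \ref{lem:rel2a}'', and the relations (\ref{eq:rel2a})--(\ref{eq:rel2c}) with exponents $q^{\pm 4}$ are exactly what two applications of the linear relations produce. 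Your two representative computations --- $x\nu_y = q^2\nu_y x$ via (\ref{eq:flip3}), and $x\nu_z = q^{-2}\nu_z x$, equivalently $x^2\nu_z = q^{-4}\nu_z x^2$, via (\ref{eq:flip1}) --- both check out.

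One caution on the symmetry step: the cyclic shift $x \mapsto y \mapsto z \mapsto x$ does \emph{not} extend to an automorphism of $U_q(\mathfrak{sl}_2)$, because the presentation inverts $y$ only, and $z$ has no inverse to receive $y^{-1}$. Your formulation --- relabel the \emph{derivation} itself, which uses nothing beyond the cyclically invariant relations (\ref{eq:flip1})--(\ref{eq:flip3}) and Definition \ref{def:nu} --- is sound precisely because it makes no appeal to such an automorphism, but you should state this explicitly, since a reader will otherwise read ``cyclic relabelling'' as invoking an automorphism that does not exist.
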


In the next few lemmas, we give some relations between $\nu_x, \nu_y, \nu_z$ and $x^2, y^2, z^2$.

\begin{lemma}\label{lem:rel2}
The following relations hold in $U_q(\mathfrak{sl}_2)$:
\begin{align}
x^2 \nu_y &= q^4 \nu_y x^2, \qquad x^2\nu_z = q^{-4}\nu_z x^2, \label{eq:rel2a}\\
y^2 \nu_z &= q^4 \nu_z y^2, \qquad y^2\nu_x = q^{-4}\nu_x y^2, \label{eq:rel2b}\\
z^2 \nu_x &= q^4 \nu_x z^2, \qquad z^2\nu_y = q^{-4}\nu_y z^2. \label{eq:rel2c}
\end{align}
\end{lemma}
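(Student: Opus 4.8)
The plan is to derive each relation in Lemma~\ref{lem:rel2} from the ``single-power'' relations in Lemma~\ref{lem:rel2a} by squaring the appropriate generator, i.e.\ multiplying the relevant single-power relation on one side by $x$, $y$, or $z$ and then applying the single-power relation a second time to move the extra factor past $\nu$. Concretely, for the first relation $x^2\nu_y = q^4\nu_y x^2$, I would start from $x\nu_y = q^2\nu_y x$ (Lemma~\ref{lem:rel2a}), multiply both sides on the left by $x$ to obtain $x^2\nu_y = q^2 x\nu_y x$, and then apply $x\nu_y = q^2\nu_y x$ again to the right-hand side to get $x^2\nu_y = q^2(q^2\nu_y x)x = q^4\nu_y x^2$. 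The same two-step pattern handles all six relations: each follows by applying the corresponding relation from Lemma~\ref{lem:rel2a} twice, picking up a factor of $q^2$ (or $q^{-2}$) each time.

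The only subtlety concerns the second column of Lemma~\ref{lem:rel2a}, which is written as $x^2\nu_z = q^{-2}\nu_z x$, $y^2\nu_x = q^{-2}\nu_x y$, $z^2\nu_y = q^{-2}\nu_y z$, rather than in the clean ``$x\nu_z = q^{-?}\nu_z x$'' shape. Here I would instead read these as already containing an $x^2$ on the left; to reach $x^2\nu_z = q^{-4}\nu_z x^2$ I need the commutation rule for a single $x$ past $\nu_z$. That single-power rule is obtainable from the quoted relation $x^2\nu_z = q^{-2}\nu_z x$ together with the first-column relation (or directly from Lemma~\ref{lem:rel1} and Lemma~\ref{lem:reform}); in fact combining $x^2 \nu_z = q^{-2}\nu_z x$ with the identity $x\nu_z = x^{-1}\cdot x^2\nu_z$ is circular, so the cleanest route is to note that Lemma~\ref{lem:rel2a} as stated is most naturally paired so that, say, $x\nu_z = q^{-1}\nu_z x$ holds (this is the genuine single-power relation, and the displayed ``$x^2\nu_z = q^{-2}\nu_z x$'' already reflects one application of it). Given $x\nu_z = q^{-1}\nu_z x$, multiplying on the left by $x$ and applying it again yields $x^2\nu_z = q^{-2}x\nu_z x = q^{-2}(q^{-1}\nu_z x)x \cdot q^{-1}$... — more carefully, $x^2\nu_z = q^{-1}x\nu_z x = q^{-2}\nu_z x^2$, which is only $q^{-2}$, not $q^{-4}$; so in fact if the single-power exponent on $\nu_z$ were $-1$ we would get $q^{-2}$, meaning the true single-power relation must have exponent giving $q^{-4}$ after squaring, consistent with the statement. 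I would resolve this bookkeeping by simply taking the relations of Lemma~\ref{lem:rel2a} at face value in the forms that are genuinely single-power (first column: $x\nu_y=q^2\nu_y x$ etc.) and, for the mixed cases, re-deriving the needed single-power commutation from Lemmas~\ref{lem:reform} and~\ref{lem:rel1} directly, which is a short computation.

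In more detail, the anticipated clean argument is: from (\ref{eq:pairs1})--(\ref{eq:pairs3}) and (\ref{eq:flip1})--(\ref{eq:flip3}) one checks the six single-power relations $x\nu_y=q^2\nu_y x$, $z\nu_x = q^2\nu_x z$, $y\nu_z = q^2\nu_z y$ and $x\nu_z = q^{-2}\nu_z x$, $y\nu_x = q^{-2}\nu_x y$, $z\nu_y = q^{-2}\nu_y z$ (these are exactly the content of Lemma~\ref{lem:rel2a} once the $x^2,y^2,z^2$ there are recognized as typographic abbreviations for $x,y,z$, or else derived afresh). Then each relation of Lemma~\ref{lem:rel2} is obtained by applying the matching single-power relation twice:
\begin{align*}
x^2\nu_y &= x(x\nu_y) = x(q^2\nu_y x) = q^2(x\nu_y)x = q^4\nu_y x^2,\\
x^2\nu_z &= x(x\nu_z) = x(q^{-2}\nu_z x) = q^{-2}(x\nu_z)x = q^{-4}\nu_z x^2,
\end{align*}
and cyclically $x\to y\to z\to x$ for the remaining four.

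The main obstacle is not conceptual but notational: correctly identifying the single-power commutation relations that the argument squares, given that Lemma~\ref{lem:rel2a} displays some of them with a square already present. Once those six single-power relations are pinned down unambiguously (either by accepting Lemma~\ref{lem:rel2a} as written in its single-power column and deriving the three ``$q^{-2}$'' single-power relations from Lemmas~\ref{lem:reform}--\ref{lem:rel1}, or by citing \cite[Lemma 3.5]{Terwilliger2011} in full), the proof of Lemma~\ref{lem:rel2} is a direct two-line computation per relation with no further difficulty. I would also remark that the exponents are forced by consistency: conjugating by $x$ scales $\nu_y$ by $q^2$ and $\nu_z$ by $q^{-2}$, so conjugating by $x^2$ scales them by $q^4$ and $q^{-4}$ respectively, which is a useful sanity check on the signs of the exponents in (\ref{eq:rel2a})--(\ref{eq:rel2c}).
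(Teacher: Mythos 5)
Your proposal is correct and is essentially the paper's own argument: the paper's proof of Lemma~\ref{lem:rel2} is simply ``Immediate from Lemma~\ref{lem:rel2a},'' i.e.\ apply each single-power commutation relation twice, exactly as in your displayed computation. You are also right that the second column of Lemma~\ref{lem:rel2a} as printed contains a typo (it should read $x\nu_z=q^{-2}\nu_z x$, $y\nu_x=q^{-2}\nu_x y$, $z\nu_y=q^{-2}\nu_y z$, matching \cite[Lemma 3.5]{Terwilliger2011}), and your rederivation of those single-power relations from Lemmas~\ref{lem:reform} and~\ref{lem:rel1} is the correct way to pin them down.
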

\begin{proof}
Immediate from Lemma \ref{lem:rel2a}.
\end{proof}

\begin{lemma}\label{lem:rel3}
The following relations hold in $U_q(\mathfrak{sl}_2)$:
\begin{align}
x^2y^2 &= 1-q^{-2}(q+q^{-1})\nu_z + q^{-4}\nu_z^2, \label{eq:squareda}\\
y^2z^2 &= 1-q^{-2}(q+q^{-1})\nu_x + q^{-4}\nu_x^2,\label{eq:squaredb}\\
z^2x^2 &= 1-q^{-2}(q+q^{-1})\nu_y + q^{-4}\nu_y^2,\label{eq:squaredc}
\end{align}
and
\begin{align}
y^2x^2 &= 1-q^2(q+q^{-1})\nu_z + q^4\nu_z^2, \label{eq:squaredd}\\
z^2y^2 &= 1-q^2(q+q^{-1})\nu_x + q^4\nu_x^2, \label{eq:squarede}\\
x^2z^2 &= 1-q^2(q+q^{-1})\nu_y + q^4\nu_y^2. \label{eq:squaredf}
\end{align}
\end{lemma}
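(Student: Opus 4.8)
The goal is to prove the six identities in Lemma \ref{lem:rel3}, which express the products $x^2y^2$, $y^2z^2$, $z^2x^2$ and their reverses in terms of $1$, $\nu_x$, $\nu_y$, $\nu_z$. By the cyclic symmetry $x \to y \to z \to x$ of the defining relations and of the definitions of $\nu_x, \nu_y, \nu_z$ (which permutes the three relations (\ref{eq:equit1})--(\ref{eq:equit3}) and correspondingly cycles $\nu_x \to \nu_y \to \nu_z \to \nu_x$), it suffices to prove one identity from each group, say (\ref{eq:squareda}) and (\ref{eq:squaredd}); the remaining four then follow by applying this symmetry. So the plan is to focus on $x^2y^2$ and $y^2x^2$.

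The plan is to compute $x^2y^2$ directly using Lemma \ref{lem:rel1}, specifically (\ref{eq:pairs1}), together with the commutation relation $x\nu_z = q^{-2}\nu_z x$ from Lemma \ref{lem:rel2a}. First I would write $x^2 y^2 = x(xy)(y) = x(1 - q^{-1}\nu_z)y$ using the left equation of (\ref{eq:pairs1}). Expanding gives $x^2y^2 = xy - q^{-1}x\nu_z y$. Now I would again apply (\ref{eq:pairs1}) to the first term, and for the second term use $x\nu_z = q^{-2}\nu_z x$ to get $x\nu_z y = q^{-2}\nu_z xy = q^{-2}\nu_z(1 - q^{-1}\nu_z) = q^{-2}\nu_z - q^{-3}\nu_z^2$. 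Collecting terms: $x^2y^2 = (1 - q^{-1}\nu_z) - q^{-1}(q^{-2}\nu_z - q^{-3}\nu_z^2) = 1 - q^{-1}\nu_z - q^{-3}\nu_z + q^{-4}\nu_z^2$. Since $q^{-1} + q^{-3} = q^{-2}(q + q^{-1})$, this is exactly (\ref{eq:squareda}). For (\ref{eq:squaredd}) I would run the parallel computation using the right equations $yx = 1 - q\nu_z$ from (\ref{eq:pairs1}) and the relation $x^2\nu_z = q^{-2}\nu_z x$ from Lemma \ref{lem:rel2a}—wait, I need the relation for $\nu_z$ past $y$ on the correct side; here one uses that $y\nu_z = q^2\nu_z y$ (from Lemma \ref{lem:rel2a}) is not quite what appears, so instead I would write $y^2x^2 = y(yx)x = y(1-q\nu_z)x = yx - qy\nu_z x = (1-q\nu_z) - q(q^2\nu_z y)x = 1 - q\nu_z - q^3\nu_z(1-q\nu_z) = 1 - q\nu_z - q^3\nu_z + q^4\nu_z^2$, and $q + q^3 = q^2(q+q^{-1})$ gives (\ref{eq:squaredd}).

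I do not expect any serious obstacle: every step uses only the linear relations of Lemma \ref{lem:rel1} and the commutation relations of Lemma \ref{lem:rel2a}, both of which are available. The one point requiring a little care is making sure that each time $\nu_z$ is moved past an $x$ or a $y$, the correct power of $q$ is used—the relations in Lemma \ref{lem:rel2a} are stated only for single generators ($x\nu_z = \cdots$ with $x^2\nu_z$, $y\nu_z = q^2\nu_z y$, etc.), so one must track whether $\nu_z$ is being pushed left or right and past which generator. The safest route is to always reduce a product to the form (polynomial in $\nu_z$) by first eliminating one pair $xy$ or $yx$ via Lemma \ref{lem:rel1}, then using a single commutation move, then eliminating the remaining pair. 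Finally I would remark that the six identities may alternatively be verified by expanding everything in the PBW basis of Lemma \ref{lem:Uqsl2basis}, but the computation above is shorter.
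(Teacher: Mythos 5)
Your proof is correct, and it takes a mildly different route from the paper's. The paper computes $(xy)^2$ in two ways --- once as $(1-q^{-1}\nu_z)^2$ via Lemma \ref{lem:rel1}, and once as $x(yx)y = q^2x^2y^2+(1-q^2)xy$ via Lemma \ref{lem:reform} --- and equates the results; it never invokes the commutation relations of Lemma \ref{lem:rel2a}. You instead compute $x^2y^2 = x(xy)y$ directly by substituting $xy = 1-q^{-1}\nu_z$ from (\ref{eq:pairs1}) and then pushing $\nu_z$ past $x$ with $x\nu_z = q^{-2}\nu_z x$; both arguments are two lines long, and yours simply trades Lemma \ref{lem:reform} for Lemma \ref{lem:rel2a}. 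Two small remarks. First, the relation you need, $x\nu_z = q^{-2}\nu_z x$, is what Lemma \ref{lem:rel2a} clearly intends (the displayed ``$x^2\nu_z=q^{-2}\nu_z x$'' is evidently a typo for ``$x\nu_z$'', as the derivation of Lemma \ref{lem:rel2} from it confirms), and in any case it follows in one line from Definition \ref{def:nu}: $x\nu_z = q^{-1}x(1-yx)=q^{-1}(1-xy)x=q^{-2}\nu_z x$. So the hesitation in your second paragraph is unnecessary --- $y\nu_z=q^2\nu_z y$ is stated verbatim in Lemma \ref{lem:rel2a} and is exactly what your computation of $y^2x^2$ uses. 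Second, your appeal to the cyclic symmetry $x\to y\to z\to x$, $\nu_x\to\nu_y\to\nu_z\to\nu_x$ to dispatch the remaining four identities is legitimate (it is the standard $\mathbb{Z}_3$-automorphism of the equitable presentation) and makes precise what the paper leaves implicit in ``the remaining relations are similarly obtained.''
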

\begin{proof}
To verify (\ref{eq:squareda}), we compute $(xy)^2$ in two ways. Using Lemma \ref{lem:rel1},
\begin{equation}
(xy)^2 = (1 - q^{-1}\nu_z)^2 = 1-2q^{-1}\nu_z + q^{-2}\nu_z^2. \label{eq:ver1}
\end{equation}
 Using Lemma \ref{lem:reform} and Lemma \ref{lem:rel1},
\begin{align}
    (xy)^2 &= x(yx)y\notag\\
            &= x(q^2xy - q^2 + 1)y\notag\\
            &= q^2 x^2 y^2 + (1-q^2)xy\notag\\
            &= q^2 x^2 y^2 + (1-q^2)(1-q^{-1}\nu_z).\label{eq:ver2}
\end{align}
By equating the right-hand sides of (\ref{eq:ver1}) and (\ref{eq:ver2}), we obtain (\ref{eq:squareda}). The remaining relations are similarly obtained.
\end{proof}

\begin{lemma}\label{lem:rel4}
The following relations hold in $U_q(\mathfrak{sl}_2)$:
\begin{align}
x^2 \nu_x &= q^{-1} x^2 - q^{-1} + q^2 \nu_y + q^{-2}\nu_z - q\nu_y\nu_z,\label{eq:x2nuxa}\\
y^2 \nu_y &= q^{-1} y^2 - q^{-1} + q^2 \nu_z + q^{-2}\nu_x - q\nu_z\nu_x,\label{eq:x2nuxb}\\
z^2 \nu_z &= q^{-1} z^2 - q^{-1} + q^2 \nu_x + q^{-2}\nu_y - q\nu_x\nu_y,\label{eq:x2nuxc}
\end{align}
and
\begin{align}
\nu_x x^2 &= q^{-1}x^2 - q^{-1}+q^{-2}\nu_y + q^2\nu_z - q\nu_y\nu_z, \label{eq:x2nuxd}\\
\nu_y y^2 &= q^{-1}y^2 - q^{-1}+q^{-2}\nu_z + q^2\nu_x - q\nu_z\nu_x, \label{eq:x2nuxe}\\
\nu_z z^2 &= q^{-1}z^2 - q^{-1}+q^{-2}\nu_x + q^2\nu_y - q\nu_x\nu_y. \label{eq:x2nuxf}
\end{align}
\end{lemma}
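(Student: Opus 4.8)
The plan is to verify equation (\ref{eq:x2nuxa}) directly by a computation analogous to the one used in Lemma \ref{lem:rel3}, and then note that (\ref{eq:x2nuxb}) and (\ref{eq:x2nuxc}) follow by the cyclic symmetry $x \to y \to z \to x$ (equivalently $\nu_x \to \nu_y \to \nu_z \to \nu_x$), which is a symmetry of the defining relations of $U_q(\mathfrak{sl}_2)$. Once (\ref{eq:x2nuxa})--(\ref{eq:x2nuxc}) are established, the relations (\ref{eq:x2nuxd})--(\ref{eq:x2nuxf}) should follow by applying a suitable anti-automorphism or by an independent but parallel computation; I will indicate below which I expect to be cleaner.

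To prove (\ref{eq:x2nuxa}), first I would express $\nu_x$ in terms of $x$, $y$, $z$ using (\ref{eq:nux}), so that $x^2 \nu_x = q x^2 (1 - yz) = q x^2 - q x^2 y z$. The task then reduces to rewriting $x^2 y z$ in the basis of Lemma \ref{lem:Uqsl2basis} and re-expressing the result in terms of $x^2$ and $\nu_y, \nu_z$. The key move is to use the flip relations of Lemma \ref{lem:reform} to move one factor of $x$ past $y$: from (\ref{eq:flip1}) we have $xy = q^{-2} yx - q^{-2} + 1$, so $x^2 y z = x(xy)z = x(q^{-2}yx - q^{-2}+1)z = q^{-2} (xy)(xz) + (1 - q^{-2}) xz$. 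Now each of $xy$, $xz$ can be replaced using Lemma \ref{lem:rel1}: $xy = 1 - q^{-1}\nu_z$ and $xz = 1 - q\nu_y$. Substituting gives $x^2 yz = q^{-2}(1 - q^{-1}\nu_z)(1 - q\nu_y) + (1-q^{-2})(1 - q\nu_y)$, and multiplying out, collecting the constant, the $\nu_y$, the $\nu_z$, and the $\nu_z \nu_y$ terms, then multiplying the whole thing by $-q$ and adding $qx^2$, should yield exactly the right-hand side of (\ref{eq:x2nuxa}) — provided one is careful about the order of $\nu_z$ and $\nu_y$ and uses Lemma \ref{lem:rel2a} or the relevant commutation relation if needed to match the stated monomial $\nu_y\nu_z$.

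For the second block (\ref{eq:x2nuxd})--(\ref{eq:x2nuxf}), the cleanest route is to run the mirror-image computation: write $\nu_x x^2 = q(1-yz)x^2 = qx^2 - q\, yzx^2$, then use $zx = q^{-2}xz - q^{-2}+1$ from (\ref{eq:flip3}) to move a factor of $x$ leftward past $z$, obtaining $yzx \cdot x = y(zx)x = y(q^{-2}xz - q^{-2}+1)x = q^{-2}(yx)(zx) + \cdots$ — wait, one must be careful here; alternatively write $yzx^2 = (yz)x \cdot x$ and push through using the flips, or better, observe that $U_q(\mathfrak{sl}_2)$ admits the anti-automorphism fixing each of $x, y, z$ only if it reverses the role of $q$ and $q^{-1}$, which is exactly the symmetry interchanging the two columns in Lemmas \ref{lem:reform}, \ref{lem:rel1}, \ref{lem:rel2a}; applying that anti-automorphism to (\ref{eq:x2nuxa})--(\ref{eq:x2nuxc}) sends $x^2 \nu_x$ to $\nu_x x^2$ and swaps $q \leftrightarrow q^{-1}$ and $\nu_y \leftrightarrow \nu_z$ appropriately, and should deliver (\ref{eq:x2nuxd})--(\ref{eq:x2nuxf}) at once.

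The main obstacle I anticipate is purely bookkeeping: keeping track of the noncommutativity of $\nu_y$ and $\nu_z$ so that the cross term comes out as $\nu_y\nu_z$ (and not $\nu_z\nu_y$ with a different power of $q$), and making sure the relations I invoke to pass factors of $x$ past $y$ or $z$ are applied in the correct order. There is no conceptual difficulty — every identity needed is already in Lemmas \ref{lem:reform}, \ref{lem:rel1}, and \ref{lem:rel2a} — but the sign of the $q$-powers in the final collected expression is easy to get wrong, so I would double-check the computation for (\ref{eq:x2nuxa}) by also expanding $x^2\nu_x$ a second way (e.g. as $x \cdot x\nu_x$ with $x\nu_x$ handled first) and confirming the two results agree.
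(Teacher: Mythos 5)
Your core strategy---expand $\nu_x$ via Definition \ref{def:nu}, use a flip relation from Lemma \ref{lem:reform} to rewrite the resulting degree-four monomial as a combination of $(xy)(xz)$ and a degree-two term, then substitute via Lemma \ref{lem:rel1}---is exactly the paper's, and the paper also disposes of the remaining five identities with ``similarly obtained.'' The one genuine gap is the issue you yourself flagged but propose to fix with the wrong tool. Starting from $\nu_x=q(1-yz)$, your computation gives $x^2yz=1-q\nu_y-q^{-3}\nu_z+q^{-2}\nu_z\nu_y$ and hence $x^2\nu_x=qx^2-q+q^2\nu_y+q^{-2}\nu_z-q^{-1}\nu_z\nu_y$. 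This is a correct identity, but it is not literally (\ref{eq:x2nuxa}): the coefficients of $x^2$ and of $1$ differ, and the cross term is $q^{-1}\nu_z\nu_y$ rather than $q\nu_y\nu_z$. Reconciling the two requires the identity $x^2=1-\frac{q\nu_y\nu_z-q^{-1}\nu_z\nu_y}{q-q^{-1}}$ of Lemma \ref{lem:x2innu}, which trades $q^{-1}\nu_z\nu_y$ for $q\nu_y\nu_z$ at the cost of shifting the $x^2$ and constant coefficients by $q-q^{-1}$. Lemma \ref{lem:rel2a}, which you cite as the fallback, only relates $\nu_y,\nu_z$ to the degree-one elements $x,y,z$ and cannot reorder $\nu_z\nu_y$ into $\nu_y\nu_z$. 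The painless fix is the paper's choice of decomposition: start from $\nu_x=q^{-1}(1-zy)$, so that $x^2zy=x(xz)y=q^2(xz)(xy)+(1-q^2)xy=1-q^3\nu_y-q^{-1}\nu_z+q^2\nu_y\nu_z$, and (\ref{eq:x2nuxa}) drops out in the stated form with no reordering needed.

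Your symmetry shortcuts also need care, because among the equitable generators only $y$ is invertible. There is no algebra automorphism of $U_q(\mathfrak{sl}_2)$ realizing $x\to y\to z\to x$ (it would force $z$ to be invertible); what is true, and what the paper implicitly uses, is that the computation only invokes the cyclically symmetric relations (\ref{eq:equit1})--(\ref{eq:equit3}), so the same computation with letters permuted proves (\ref{eq:x2nuxb}) and (\ref{eq:x2nuxc}). Phrase it that way. For the second block, there is no anti-automorphism fixing each of $x,y,z$, and ``swapping $q\leftrightarrow q^{-1}$'' is not available since $q$ is a fixed scalar of $\K$. The anti-automorphism that does exist fixes $y$ and swaps $x\leftrightarrow z$ (hence fixes $\nu_y$ and swaps $\nu_x\leftrightarrow\nu_z$, $x^2\leftrightarrow z^2$); it sends (\ref{eq:x2nuxa}) to (\ref{eq:x2nuxf}), (\ref{eq:x2nuxb}) to (\ref{eq:x2nuxe}), and (\ref{eq:x2nuxc}) to (\ref{eq:x2nuxd}), not (\ref{eq:x2nuxa}) to (\ref{eq:x2nuxd}) as you hoped. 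Your alternative of running the mirror-image computation directly for $\nu_x x^2$ is sound and is the safer route.
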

\begin{proof}
%To verify (\ref{eq:x2nuxa}), use $\nu_x = q^{-1}(1-zy)$ and simplify with Lemma \ref{lem:reform} and Lemma \ref{lem:rel1}.

We verify (\ref{eq:x2nuxa}). Using $\nu_x = q^{-1}(1-zy)$, we find
\[
x^2 \nu_x = q^{-2} x^2 - q^{-1} x^2 zy.
\]
By Lemma \ref{lem:reform} and Lemma \ref{lem:rel1},
\begin{align*}
x^2 z y &= x (q^2zx - q^2 + 1)y = q^2 (xz)(xy) + (1-q^2)xy\\
    &= q^2 (1-q\nu_y)(1-q^{-1}\nu_z) + (1-q^2)(1-q^{-1}\nu_z)\\
    &= 1 - q^3\nu_y - q^{-1}\nu_z +q^2\nu_y\nu_z.
\end{align*}

The equation (\ref{eq:x2nuxa}) follows from these comments. The remaining relations are similarly obtained.
\end{proof}

\begin{lemma}{\rm \cite[Lemma 3.10]{Terwilliger2011}}\label{lem:x2innu}
The following relations hold in $U_q(\mathfrak{sl}_2)$:
\begin{align}
x^2 &= 1 - \frac{q\nu_y\nu_z - q^{-1}\nu_z\nu_y}{q-q^{-1}},\label{eq:x2innua}\\
y^2 &= 1 - \frac{q\nu_z\nu_x - q^{-1}\nu_x\nu_z}{q-q^{-1}},\label{eq:x2innub}\\
z^2 &= 1 - \frac{q\nu_x\nu_y - q^{-1}\nu_y\nu_x}{q-q^{-1}}.\label{eq:x2innuc}
\end{align}
\end{lemma}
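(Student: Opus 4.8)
The plan is to derive each of (\ref{eq:x2innua})--(\ref{eq:x2innuc}) directly from the relations already in hand, namely Lemma \ref{lem:rel4}. I focus on (\ref{eq:x2innua}); the other two follow by the cyclic symmetry $x \to y \to z \to x$, $\nu_x \to \nu_y \to \nu_z \to \nu_x$ that permutes the displayed relations. The idea is to isolate the ``mixed'' terms $\nu_y \nu_z$ and $\nu_z \nu_y$ using the two relations in Lemma \ref{lem:rel4} that involve $x^2$, and then solve the resulting linear system for $x^2$.

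First I would take (\ref{eq:x2nuxa}) and (\ref{eq:x2nuxd}), which express $x^2 \nu_x$ and $\nu_x x^2$ in terms of $x^2$, the scalars, and the products $\nu_y \nu_z$ (appearing in both) together with the linear terms $\nu_y$ and $\nu_z$. The key observation is that by Lemma \ref{lem:rel2a} we have $x^2 \nu_y = q^{-2}\nu_y x^2$ restated, and more usefully $x \nu_y = q^2 \nu_y x$, so $\nu_y$ and $\nu_z$ commute with $x^2$ up to the fourth powers of $q$ recorded in Lemma \ref{lem:rel2}; this lets me pass between $x^2 \nu_x$ and $\nu_x x^2$ cleanly. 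Concretely, I would rewrite the left side of (\ref{eq:x2nuxa}) as $x^2 \nu_x$ and the left side of (\ref{eq:x2nuxd}) as $\nu_x x^2$, and then use the commutation relations between $\nu_x$ and $x^2$ — or, more directly, simply subtract a suitable scalar multiple of one relation from the other to eliminate $x^2$ entirely on the left. Subtracting (\ref{eq:x2nuxd}) from (\ref{eq:x2nuxa}) gives
\[
x^2 \nu_x - \nu_x x^2 = (q^2 - q^{-2})\nu_y - (q^2 - q^{-2})\nu_z,
\]
which is an auxiliary identity but does not yet isolate $x^2$; instead the productive move is to note that both right-hand sides contain the \emph{same} quadratic term $-q\nu_y\nu_z$, so forming an appropriate combination that also uses the symmetric relation (obtained by the substitution swapping the roles, i.e. the analogue with $\nu_z\nu_y$) will let me solve for $x^2$.

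The cleanest route, which I would actually carry out, is this: rearrange (\ref{eq:x2nuxa}) to solve for the scalar-free part, then substitute the expression $x^2 = 1 - q^{-2}(q+q^{-1})\nu_z + q^{-4}\nu_z^2$ is \emph{not} available — that is a product relation, not what we want. Rather, from (\ref{eq:x2nuxa}) I solve for $\nu_y\nu_z$:
\[
q\,\nu_y\nu_z = q^{-1}x^2 - q^{-1} + q^2\nu_y + q^{-2}\nu_z - x^2\nu_x,
\]
and similarly from the relation for $\nu_x x^2$, namely (\ref{eq:x2nuxd}), I again get an expression for $q\,\nu_y\nu_z$. But the decisive input is the companion relation that produces $\nu_z\nu_y$ with opposite-sign $q$-powers; multiplying one relation by $q$ and the other by $q^{-1}$ and subtracting makes the $x^2$-coefficients combine as $q^{-1}\cdot q^{-1} - q\cdot\ldots$ and, crucially, isolates $q\nu_y\nu_z - q^{-1}\nu_z\nu_y$ on one side while leaving a scalar multiple of $x^2$ minus scalars on the other. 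Dividing by $q - q^{-1}$ (which is nonzero since $q^2 \neq 1$) yields exactly (\ref{eq:x2innua}). The main obstacle is purely bookkeeping: one must track which of the six relations in Lemma \ref{lem:rel4} pair up correctly so that the linear terms $\nu_y$ and $\nu_z$ cancel at the same time as the $x^2\nu_x$ and $\nu_x x^2$ terms are converted to a plain multiple of $x^2$ via Lemma \ref{lem:rel2a}; I expect this to be a short but slightly delicate computation, after which the other two relations are immediate by the cyclic symmetry of the setup.
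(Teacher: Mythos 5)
There is a genuine gap here, and in fact the paper itself offers no proof of this lemma --- it is quoted directly from \cite[Lemma 3.10]{Terwilliger2011}. Your proposed derivation from Lemma \ref{lem:rel4} cannot work for two concrete reasons. First, every relation in Lemma \ref{lem:rel4} involves only the ordered products $\nu_y\nu_z$, $\nu_z\nu_x$, $\nu_x\nu_y$; the reversed product $\nu_z\nu_y$ never appears in (\ref{eq:x2nuxa})--(\ref{eq:x2nuxf}) (nor in Lemma \ref{lem:rel2}), so the ``companion relation that produces $\nu_z\nu_y$ with opposite-sign $q$-powers'' that your elimination hinges on does not exist among the relations you are allowed to use. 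No linear combination of (\ref{eq:x2nuxa}) and (\ref{eq:x2nuxd}) can produce the quantity $q\nu_y\nu_z - q^{-1}\nu_z\nu_y$. Second, you claim Lemma \ref{lem:rel2a} lets you convert $x^2\nu_x$ and $\nu_x x^2$ into a plain multiple of $x^2$; but Lemma \ref{lem:rel2a} and Lemma \ref{lem:rel2} only give $q$-commutation of $x$ (or $x^2$) past $\nu_y$ and $\nu_z$, never past $\nu_x$. Your own auxiliary identity $x^2\nu_x - \nu_x x^2 = (q^2-q^{-2})(\nu_y - \nu_z)$ already shows that $x^2$ and $\nu_x$ do not $q$-commute, so these left-hand sides cannot be eliminated in the way you describe.

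The intended argument is a direct computation from Definition \ref{def:nu} together with Lemmas \ref{lem:reform} and \ref{lem:rel1}. Choosing the representations $q\nu_y = 1-xz$, $\nu_z = q(1-xy)$ and $q^{-1}\nu_z = 1-xy$ (up to the appropriate factors), one expands
\begin{equation*}
q\nu_y\nu_z - q^{-1}\nu_z\nu_y = q(1-xz)(1-xy) - q^{-1}(1-xy)(1-xz),
\end{equation*}
rewrites $qxzxy - q^{-1}xyxz = x\bigl(q(zx)y - q^{-1}(yx)z\bigr)$ using (\ref{eq:flip1}) and (\ref{eq:flip3}) to pull an $x$ to the front, and then uses $q^{-1}zy - qyz = -(q-q^{-1})$ from Lemma \ref{lem:rel1}. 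All terms cancel except $(q-q^{-1})(1-x^2)$, which gives (\ref{eq:x2innua}); the other two relations follow by the cyclic symmetry you correctly identify. So the cyclic-symmetry remark is fine, but the core elimination step of your proposal would fail if carried out.
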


\begin{lemma}\label{lem:nuinx2}
Assume that $q^4 \ne 1$. Then the following relations hold in $U_q(\mathfrak{sl}_2)$:
\begin{align}
	(q+q^{-1})\nu_x &= q^2 + q^{-2} - \frac{q^4 y^2 z^2 - q^{-4}z^2y^2}{q^2-q^{-2}},\label{eq:nuinx2a}\\
	(q+q^{-1})\nu_y &= q^2 + q^{-2} - \frac{q^4 z^2 x^2 - q^{-4}x^2z^2}{q^2-q^{-2}},\label{eq:nuinx2b}\\
	(q+q^{-1})\nu_z &= q^2 + q^{-2} - \frac{q^4 x^2 y^2 - q^{-4}y^2x^2}{q^2-q^{-2}}.\label{eq:nuinx2c}
\end{align}
\end{lemma}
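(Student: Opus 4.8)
The plan is to obtain each of (\ref{eq:nuinx2a})--(\ref{eq:nuinx2c}) by combining the two expressions for a product of squares provided by Lemma \ref{lem:rel3}, so as to eliminate the quadratic term in $\nu_x$ (resp.\ $\nu_y$, $\nu_z$). I will carry out the argument for (\ref{eq:nuinx2a}); the other two follow in the same way after cyclically permuting $x,y,z$.

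First I would recall from (\ref{eq:squaredb}) and (\ref{eq:squarede}) that
\[
y^2 z^2 = 1 - q^{-2}(q+q^{-1})\nu_x + q^{-4}\nu_x^2, \qquad z^2 y^2 = 1 - q^{2}(q+q^{-1})\nu_x + q^{4}\nu_x^2.
\]
Multiplying the first equation by $q^4$ and the second by $q^{-4}$ produces two expressions with the same coefficient of $\nu_x^2$, so subtracting them cancels $\nu_x^2$ and yields
\[
q^4 y^2 z^2 - q^{-4} z^2 y^2 = (q^4 - q^{-4}) - (q^2 - q^{-2})(q+q^{-1})\nu_x.
\]

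Next, since $q^4 \ne 1$ and $q^2 \ne 1$, the scalar $q^2 - q^{-2} = (q^4-1)/q^2$ is nonzero, so I may divide the last display by $q^2 - q^{-2}$ and solve for $(q+q^{-1})\nu_x$. Using $(q^4-q^{-4})/(q^2-q^{-2}) = q^2+q^{-2}$, the result rearranges precisely to (\ref{eq:nuinx2a}). Applying the identical steps to the pair (\ref{eq:squaredc}),(\ref{eq:squaredf}) gives (\ref{eq:nuinx2b}), and to the pair (\ref{eq:squareda}),(\ref{eq:squaredd}) gives (\ref{eq:nuinx2c}).

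There is no genuine obstacle in this computation; the only point demanding care is the hypothesis $q^4 \ne 1$, which is exactly what is needed to invert $q^2 - q^{-2}$ and hence legitimize the final division. It is worth noting that this lemma is, in effect, an inverse to Lemma \ref{lem:x2innu}: together the two lemmas show that, when $q^4 \ne 1$, the sets $\{x^2, y^2, z^2\}$ and $\{\nu_x, \nu_y, \nu_z\}$ generate the same subalgebra of $U_q(\mathfrak{sl}_2)$.
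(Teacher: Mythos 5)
Your proposal is correct and is essentially the computation the paper intends: the paper's proof simply says to evaluate each right-hand side using Lemma \ref{lem:rel3}, which amounts to the same cancellation of the $\nu_x^2$ (resp.\ $\nu_y^2$, $\nu_z^2$) terms that you carry out explicitly. Your identification of the correct pairs of equations and the role of $q^4 \ne 1$ in inverting $q^2 - q^{-2}$ are both accurate.
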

\begin{proof}
For each equation, evaluate the right-hand side using Lemma \ref{lem:rel3}.
\end{proof}

We now display some relations satisfied by $\nu_x, \nu_y, \nu_z$ in $U_q(\mathfrak{sl}_2)$.
\begin{lemma}\label{lem:defrel1}
The following relations hold in $U_q(\mathfrak{sl}_2)$:
\begin{align}
q^3 \nu_x^2\nu_y-(q+q^{-1})\nu_x\nu_y\nu_x + q^{-3}\nu_y\nu_x^2 &= (q^2-q^{-2})(q-q^{-1})\nu_x,\label{eq:defrel1a}\\
q^3 \nu_y^2\nu_z-(q+q^{-1})\nu_y\nu_z\nu_y + q^{-3}\nu_z\nu_y^2 &= (q^2-q^{-2})(q-q^{-1})\nu_y,\label{eq:defrel1b}\\
q^3 \nu_z^2\nu_x-(q+q^{-1})\nu_z\nu_x\nu_z + q^{-3}\nu_x\nu_z^2 &= (q^2-q^{-2})(q-q^{-1})\nu_z,\label{eq:defrel1c}
\end{align}
and
\begin{align}
q^{-3} \nu_y^2\nu_x-(q+q^{-1})\nu_y\nu_x\nu_y + q^{3}\nu_x\nu_y^2 &= (q^2-q^{-2})(q-q^{-1})\nu_y,\label{eq:defrel1d}\\
q^{-3} \nu_z^2\nu_y-(q+q^{-1})\nu_z\nu_y\nu_z + q^{3}\nu_y\nu_z^2 &= (q^2-q^{-2})(q-q^{-1})\nu_z,\label{eq:defrel1e}\\
q^{-3} \nu_x^2\nu_z-(q+q^{-1})\nu_x\nu_z\nu_x + q^{3}\nu_z\nu_x^2 &= (q^2-q^{-2})(q-q^{-1})\nu_x.\label{eq:defrel1f}
\end{align}
\end{lemma}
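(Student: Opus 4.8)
The plan is to derive each of the six relations directly from the relations already established for $\nu_x, \nu_y, \nu_z$ together with $x^2, y^2, z^2$, rather than by a brute-force computation inside the basis of Lemma~\ref{lem:Uqsl2basis}. The natural engine is Lemma~\ref{lem:rel4}, which expresses each of $x^2\nu_x$, $\nu_x x^2$ (and cyclic variants) as a linear combination of $1$, $x^2$, $\nu_y$, $\nu_z$, and $\nu_y\nu_z$. The idea is that the three occurrences $q^3\nu_x^2\nu_y - (q+q^{-1})\nu_x\nu_y\nu_x + q^{-3}\nu_y\nu_x^2$ should collapse, after substituting in the commutation rules, to something visibly equal to $(q^2-q^{-2})(q-q^{-1})\nu_x$.

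First I would record the key intertwining relations in usable form: from Lemma~\ref{lem:rel2} we have $y^2\nu_x = q^{-4}\nu_x y^2$ and $z^2\nu_x = q^4\nu_x z^2$, and from Lemma~\ref{lem:rel2a} the first-power versions $z\nu_x = q^2\nu_x z$, $y^2\nu_x = q^{-2}\nu_x y$. The cleanest route to (\ref{eq:defrel1a}) is probably to start from Lemma~\ref{lem:x2innua}, namely $x^2 = 1 - (q\nu_y\nu_z - q^{-1}\nu_z\nu_y)/(q-q^{-1})$, so that $q\nu_y\nu_z - q^{-1}\nu_z\nu_y = (q-q^{-1})(1-x^2)$. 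I would then left- and right-multiply this identity by $\nu_x$ and use the commutation relations of Lemma~\ref{lem:rel2} to move $\nu_x$ past $x^2$ — but $x^2$ does not commute with $\nu_x$, so instead I would substitute Lemma~\ref{lem:rel4}(\ref{eq:x2nuxa}) and (\ref{eq:x2nuxd}) to rewrite $x^2\nu_x$ and $\nu_x x^2$. Combining $\nu_x(q\nu_y\nu_z - q^{-1}\nu_z\nu_y)$ and $(q\nu_y\nu_z - q^{-1}\nu_z\nu_y)\nu_x$ with the appropriate weighting (chosen so the $\nu_y\nu_z\nu_x$ and $\nu_x\nu_z\nu_y$ "impurity" terms cancel and only the $\nu_x^2\nu_y$, $\nu_x\nu_y\nu_x$, $\nu_y\nu_x^2$ terms survive) should produce exactly the left-hand side of (\ref{eq:defrel1a}), while the right-hand side assembles from the linear terms $q^{-1}x^2 - q^{-1} + q^2\nu_y + q^{-2}\nu_z - q\nu_y\nu_z$ and its mirror. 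The scalar bookkeeping here is the main thing to get right.

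Alternatively, and perhaps more transparently, I would prove (\ref{eq:defrel1a}) by computing the element $\nu_x(1-yz) = q^{-1}\nu_x^2$ or rather $\nu_x \cdot q(1-yz) = q\nu_x - q\nu_x yz$ in two ways, expanding $yz$ via Lemma~\ref{lem:rel1} as $1 - q^{-1}\nu_x$, which is circular, so the better move is to expand the product $\nu_x\nu_y\nu_x$ by first converting one of the outer $\nu_x$'s back into $q(1-yz)$ and pushing the $y,z$ factors through $\nu_y = q(1-zx)$ using the flip relations of Lemma~\ref{lem:reform}; this reduces everything to monomials in $x,y,z$ which can then be matched using the basis of Lemma~\ref{lem:Uqsl2basis}. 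The relations (\ref{eq:defrel1b}), (\ref{eq:defrel1c}) follow from (\ref{eq:defrel1a}) by the cyclic symmetry $x\to y\to z\to x$, which permutes $\nu_x\to\nu_y\to\nu_z\to\nu_x$ and preserves all the defining relations of $U_q(\mathfrak{sl}_2)$; and (\ref{eq:defrel1d})--(\ref{eq:defrel1f}) follow by applying the anti-automorphism that sends $x\to x$, $y\to z$, $z\to y$ and $q\to q^{-1}$ (which swaps $\nu_y\leftrightarrow\nu_z$, fixes $\nu_x$, and reverses products), turning (\ref{eq:defrel1a}) into (\ref{eq:defrel1d}) after relabeling.

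The step I expect to be the genuine obstacle is verifying that the "mixed" cubic terms — those involving all three of $\nu_x,\nu_y,\nu_z$, such as $\nu_x\nu_y\nu_z$ or $\nu_y\nu_z\nu_x$ — actually cancel when the two expansions are combined with the right coefficients; this is where a sign or a power of $q$ can easily slip, and it is also where one must be careful that $\nu_y$ and $\nu_z$ do \emph{not} commute (only $x^2,y^2,z^2$ have simple commutation with the $\nu$'s, via Lemma~\ref{lem:rel2}), so Lemma~\ref{lem:x2innua}--\ref{lem:x2innuc} must be used to control $\nu_y\nu_z - $ (scalar)$\,\nu_z\nu_y$. Once the coefficient matching in (\ref{eq:defrel1a}) is pinned down, the remaining five relations are immediate by symmetry, so essentially all the work is in one carefully organized computation.
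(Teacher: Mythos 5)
There is a genuine gap: the one computation on which everything rests is never carried out, and the route you sketch for it points at the wrong target. Relation (\ref{eq:defrel1a}) involves only $\nu_x$ and $\nu_y$, so the relevant member of Lemma \ref{lem:x2innu} is (\ref{eq:x2innuc}), which packages $q\nu_x\nu_y-q^{-1}\nu_y\nu_x$ as $(q-q^{-1})(1-z^2)$ --- not (\ref{eq:x2innua}), the one you propose to use, which involves $\nu_y\nu_z$ and $\nu_z\nu_y$. Multiplying (\ref{eq:x2innua}) by $\nu_x$ on either side and substituting (\ref{eq:x2nuxa}) and (\ref{eq:x2nuxd}) produces identities among the mixed cubics $\nu_x\nu_y\nu_z$, $\nu_x\nu_z\nu_y$, $\nu_y\nu_z\nu_x$, $\nu_z\nu_y\nu_x$; that is precisely how the paper proves Lemma \ref{lem:defrel2}, and no choice of weights can make those four monomials ``survive'' as $\nu_x^2\nu_y$, $\nu_x\nu_y\nu_x$, $\nu_y\nu_x^2$, since every one of them contains a $\nu_z$. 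The ``genuine obstacle'' you flag --- cancelling the mixed cubic terms --- is a symptom of this wrong choice rather than a hard step to be overcome: in the intended computation no mixed term ever appears. The paper's proof is two lines: (\ref{eq:defrel1a}) is equivalent to $q^2\nu_x\,\frac{q\nu_x\nu_y-q^{-1}\nu_y\nu_x}{q-q^{-1}} - q^{-2}\,\frac{q\nu_x\nu_y-q^{-1}\nu_y\nu_x}{q-q^{-1}}\,\nu_x=(q^2-q^{-2})\nu_x$, i.e.\ to $q^2\nu_x(1-z^2)-q^{-2}(1-z^2)\nu_x=(q^2-q^{-2})\nu_x$ by (\ref{eq:x2innuc}), and the two $z^2$ terms cancel because $z^2\nu_x=q^4\nu_x z^2$ by (\ref{eq:rel2c}). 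Your fallback of expanding everything into the monomial basis of Lemma \ref{lem:Uqsl2basis} would work in principle but is not executed, so it does not close the gap.

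A secondary problem is the symmetry reductions. The cyclic substitution $x\to y\to z\to x$ and the order-reversing substitution fixing $x$ and swapping $y\leftrightarrow z$ do permute the three equitable relations, but $y$ is invertible in $U_q(\mathfrak{sl}_2)$ while $x$ and $z$ are not, so neither substitution defines an (anti-)automorphism of $U_q(\mathfrak{sl}_2)$; to use them you would have to pass to the subalgebra generated by $x,y,z$ and justify its presentation, which the paper does not supply. (Also, ``$q\to q^{-1}$'' is not an operation on a $\K$-algebra with $q$ fixed, and your order-reversing map applied to (\ref{eq:defrel1a}) would yield (\ref{eq:defrel1f}) rather than (\ref{eq:defrel1d}).) The safe route, and the one the paper takes, is simply to repeat the short $z^2$-style computation with the roles of $x, y, z$ permuted for each of the six relations.
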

\begin{proof}
Observe that (\ref{eq:defrel1a}) is equivalent to
\begin{equation}\label{eq:defrel1}
q^2 \nu_x \left(\frac{q\nu_x\nu_y - q^{-1}\nu_y\nu_x}{q-q^{-1}}\right) - q^{-2}\left(\frac{q\nu_x\nu_y - q^{-1}\nu_y\nu_x}{q-q^{-1}}\right)\nu_x = (q^2-q^{-2})\nu_x.
\end{equation}

To verify (\ref{eq:defrel1}), simplify the left-hand side  using Lemma \ref{lem:x2innu} and Lemma \ref{lem:rel2}. The remaining relations are similarly obtained.
\end{proof}

\begin{lemma}\label{lem:defrel2}
The following relations hold in $U_q(\mathfrak{sl}_2)$:
\begin{align}
\nu_x \frac{q\nu_y\nu_z - q^{-1}\nu_z\nu_y}{q-q^{-1}} &= \nu_x - q^{-2}\nu_y - q^2\nu_z + \frac{q^2\nu_y\nu_z-q^{-2}\nu_z\nu_y}{q-q^{-1}},\label{eq:defrel2a}\\
\nu_y \frac{q\nu_z\nu_x - q^{-1}\nu_x\nu_z}{q-q^{-1}} &= \nu_y - q^{-2}\nu_z - q^2\nu_x + \frac{q^2\nu_z\nu_x-q^{-2}\nu_x\nu_z}{q-q^{-1}},\label{eq:defrel2b}\\
\nu_z \frac{q\nu_x\nu_y - q^{-1}\nu_y\nu_x}{q-q^{-1}} &= \nu_z - q^{-2}\nu_x - q^2\nu_y + \frac{q^2\nu_x\nu_y-q^{-2}\nu_y\nu_x}{q-q^{-1}},\label{eq:defrel2c}
\end{align}
and
\begin{align}
\frac{q\nu_y\nu_z - q^{-1}\nu_z\nu_y}{q-q^{-1}}\nu_x &= \nu_x - q^{2}\nu_y - q^{-2}\nu_z + \frac{q^2\nu_y\nu_z-q^{-2}\nu_z\nu_y}{q-q^{-1}},\label{eq:defrel2d}\\
\frac{q\nu_z\nu_x - q^{-1}\nu_x\nu_z}{q-q^{-1}}\nu_y &= \nu_y - q^{2}\nu_z - q^{-2}\nu_x + \frac{q^2\nu_z\nu_x-q^{-2}\nu_x\nu_z}{q-q^{-1}},\label{eq:defrel2e}\\
\frac{q\nu_x\nu_y - q^{-1}\nu_y\nu_x}{q-q^{-1}}\nu_z &= \nu_z - q^{2}\nu_x - q^{-2}\nu_y + \frac{q^2\nu_x\nu_y-q^{-2}\nu_y\nu_x}{q-q^{-1}}.\label{eq:defrel2f}
\end{align}
\end{lemma}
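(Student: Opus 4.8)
The plan is to reduce all six relations to Lemma~\ref{lem:x2innu} together with Lemma~\ref{lem:rel4}. By (\ref{eq:x2innua}) we have $\tfrac{q\nu_y\nu_z - q^{-1}\nu_z\nu_y}{q-q^{-1}} = 1-x^2$, so the left-hand side of (\ref{eq:defrel2a}) is $\nu_x(1-x^2) = \nu_x - \nu_x x^2$, while the left-hand side of (\ref{eq:defrel2d}) is $(1-x^2)\nu_x = \nu_x - x^2\nu_x$. Thus (\ref{eq:defrel2a})--(\ref{eq:defrel2c}) should follow by inserting the formulas (\ref{eq:x2nuxd})--(\ref{eq:x2nuxf}) for $\nu_x x^2,\ \nu_y y^2,\ \nu_z z^2$, and (\ref{eq:defrel2d})--(\ref{eq:defrel2f}) by inserting the formulas (\ref{eq:x2nuxa})--(\ref{eq:x2nuxc}) for $x^2\nu_x,\ y^2\nu_y,\ z^2\nu_z$; in each case one then applies Lemma~\ref{lem:x2innu} one more time to eliminate the remaining occurrence of $x^2$ (resp.\ $y^2$, $z^2$).

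Carrying this out for (\ref{eq:defrel2a}): after substituting (\ref{eq:x2nuxd}) and rewriting the leftover $q^{-1}x^2$ via (\ref{eq:x2innua}), the scalar terms cancel and the claim collapses to the purely formal identity
\[
q^{-1}\cdot\frac{q\nu_y\nu_z - q^{-1}\nu_z\nu_y}{q-q^{-1}} + q\nu_y\nu_z = \frac{q^2\nu_y\nu_z - q^{-2}\nu_z\nu_y}{q-q^{-1}},
\]
which holds because $q^{-1}(q\nu_y\nu_z - q^{-1}\nu_z\nu_y) + q(q-q^{-1})\nu_y\nu_z = q^2\nu_y\nu_z - q^{-2}\nu_z\nu_y$. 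The remaining five relations reduce, by the same manipulation, to the cyclic and reversed versions of this identity.

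I do not expect a real obstacle: the substantive content already lives in Lemmas~\ref{lem:x2innu} and~\ref{lem:rel4}, and what is left is elementary. The only points needing attention are bookkeeping ones: using $\nu_x x^2$ (not $x^2\nu_x$) for the first three relations and $x^2\nu_x$ (not $\nu_x x^2$) for the last three, and tracking powers of $q$ carefully when clearing the denominator $q-q^{-1}$. If a self-contained verification were preferred instead, each relation could be checked by expanding $\nu_x,\nu_y,\nu_z$ in the equitable generators and reducing to the basis of Lemma~\ref{lem:Uqsl2basis}, but the route through Lemma~\ref{lem:rel4} is far shorter.
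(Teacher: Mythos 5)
Your proposal is correct and follows essentially the same route as the paper: the paper also rewrites the left-hand side as $\nu_x - \nu_x x^2$ via Lemma~\ref{lem:x2innu}, substitutes (\ref{eq:x2nuxd}) from Lemma~\ref{lem:rel4}, and then eliminates the leftover $q^{-1}(1-x^2)$ with Lemma~\ref{lem:x2innu} again, arriving at exactly the scalar identity you display. Your bookkeeping (using $\nu_x x^2$ for (\ref{eq:defrel2a})--(\ref{eq:defrel2c}) and $x^2\nu_x$ for (\ref{eq:defrel2d})--(\ref{eq:defrel2f})) matches the paper's treatment.
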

\begin{proof}
We verify (\ref{eq:defrel2a}). By Lemma \ref{lem:x2innu} and Lemma \ref{lem:rel4},
\begin{align*}
\nu_x \frac{q\nu_y\nu_z - q^{-1}\nu_z\nu_y}{q-q^{-1}} &= \nu_x - \nu_x x^2\\
    &= \nu_x - q^{-2}\nu_y - q^2\nu_z + q^{-1}(1-x^2) +q\nu_y\nu_z\\
    &= \nu_x - q^{-2}\nu_y - q^2\nu_z + q^{-1}\frac{q\nu_y\nu_z - q^{-1}\nu_z\nu_y}{q-q^{-1}} + q\nu_y\nu_z\\
    &= \nu_x - q^{-2}\nu_y - q^2\nu_z + \frac{q^2\nu_y\nu_z - q^{-2}\nu_z\nu_y}{q-q^{-1}}.
\end{align*}

The remaining relations are similarly obtained.

\end{proof}

% The Positive Even Subalgebra A
\section{A subalgebra of \texorpdfstring{$U_q(\mathfrak{sl}_2)$}{Uq(sl2)}}

In the previous section, we considered the elements $\nu_x, \nu_y, \nu_z$ of $U_q(\mathfrak{sl}_2)$. As we will see, $\nu_x, \nu_y, \nu_z$ do not generate all of $U_q(\mathfrak{sl}_2)$. We will be concerned with the subalgebra that they do generate. In this section, we define a $\K$-subspace $\A$ of $U_q(\mathfrak{sl}_2)$. We show that $\A$ is the subalgebra of $U_q(\mathfrak{sl}_2)$ generated by $\nu_x, \nu_y, \nu_z$. We also give two bases for the $\K$-vector space $\A$.

\begin{definition}{\rm \cite[Definition 4.5]{Bockting-Conrad2013}}\rm \label{def:A}
Let $\A$ denote the $\K$-subspace of $U_q(\mathfrak{sl}_2)$ spanned by the elements
\begin{equation}\label{eq:Abasis}
    x^r y^s z^t \qquad r, s, t \in \N, \ r+s+t \text{ even}.
\end{equation}
\end{definition}

\begin{lemma}{\rm\cite[Lemma 4.6]{Bockting-Conrad2013}}
$\A$ is a $\K$-subalgebra of $U_q(\mathfrak{sl}_2)$.
\end{lemma}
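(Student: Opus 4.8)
The plan is to verify the two conditions that, together with the fact that $\A$ is by construction a $\K$-subspace, make $\A$ a unital subalgebra: that $1 \in \A$ and that $\A$ is closed under multiplication. The first is immediate, since $1 = x^0 y^0 z^0$ and $0+0+0$ is even. For the second, because multiplication is $\K$-bilinear and $\A$ is spanned by the monomials $x^r y^s z^t$ with $r,s,t \in \N$ and $r+s+t$ even, it suffices to show that the product of any two such spanning monomials again lies in $\A$.

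So I would take $u = x^{r_1} y^{s_1} z^{t_1}$ and $v = x^{r_2} y^{s_2} z^{t_2}$ with $r_i + s_i + t_i$ even, and rewrite the word $uv = x^{r_1} y^{s_1} z^{t_1} x^{r_2} y^{s_2} z^{t_2}$ in terms of the basis of Lemma \ref{lem:Uqsl2basis}. The only obstruction to normal form is the middle block $z^{t_1} x^{r_2} y^{s_2}$, and I would straighten it by repeatedly applying the relations of Lemma \ref{lem:reform}: first use $zx = q^{-2}xz - q^{-2} + 1$ and $yx = q^2 xy - q^2 + 1$ to carry the $x$'s of the second factor leftward past $z^{t_1}$ and $y^{s_1}$, and then use $zy = q^2 yz - q^2 + 1$ to sort the surviving $y$'s to the left of the $z$'s. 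Since Lemma \ref{lem:Uqsl2basis} already guarantees that the monomials $x^r y^s z^t$ form a basis, this rewriting terminates and expresses $uv$ as a $\K$-linear combination of such monomials.

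It remains to record two features of the relations in Lemma \ref{lem:reform} that confine this linear combination to $\A$. First, no negative power of $y$ is ever produced: in each relation the leading term preserves the exponent of $y$ while the constant correction only lowers it, so every monomial arising in the straightening has $y$-exponent in $\N$ (and likewise $x$- and $z$-exponents in $\N$). Second, and this is the crux, the parity of the total degree $r+s+t$ is invariant under the straightening. Indeed, each of (\ref{eq:flip1})--(\ref{eq:flip3}) rewrites a degree-two product of generators as a scalar multiple of another degree-two product plus a degree-zero scalar, and $2 \equiv 0 \pmod 2$. Equivalently, declaring each of $x, y^{\pm 1}, z$ to have degree $1$ in $\Z/2\Z$ makes every defining relation of $U_q(\mathfrak{sl}_2)$ homogeneous, so $U_q(\mathfrak{sl}_2)$ is $\Z/2\Z$-graded and the product of two even-degree elements is again even. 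Hence every monomial in the expansion of $uv$ has $r+s+t$ even, so $uv \in \A$, completing the proof.

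The main obstacle is purely bookkeeping: confirming that the rewriting neither introduces $y^{-1}$ nor disturbs the parity of the total degree. Both follow directly from the shape of (\ref{eq:flip1})--(\ref{eq:flip3}), and the parity statement is packaged most cleanly through the $\Z/2\Z$-grading, which removes any need to track the individual reduction steps.
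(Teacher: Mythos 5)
Your proof is correct, but there is nothing in the paper itself to compare it against: the paper states this lemma with a citation to \cite{Bockting-Conrad2013} and supplies no proof of its own, so your argument functions as a self-contained substitute. Its two key observations are exactly the right ones, and both are genuinely needed. The $\Z/2\Z$-grading (declaring $x, y^{\pm 1}, z$ odd and noting every defining relation is homogeneous) cleanly handles parity; but note the grading alone would not suffice, since the even component of $U_q(\mathfrak{sl}_2)$ is strictly larger than $\A$ --- it contains, for instance, $y^{-2}$ and $xy^{-1}$ --- so your companion observation, that straightening a word in $x, y, z$ via (\ref{eq:flip1})--(\ref{eq:flip3}) never introduces $y^{-1}$, is essential, and you correctly include it. One step deserves a better justification: you assert that the rewriting terminates ``since Lemma \ref{lem:Uqsl2basis} already guarantees that the monomials $x^r y^s z^t$ form a basis.'' That is a non-sequitur; the existence of a basis expansion says nothing about whether your particular reduction procedure reaches it, and you need the procedure itself to terminate in order to conclude that the resulting expansion inherits the nonnegativity and parity properties you track through it. Termination is true and routine to establish: each application of a relation from Lemma \ref{lem:reform} either swaps an adjacent out-of-order pair (length unchanged, number of inversions relative to the order $x < y < z$ drops by one) or deletes that pair (length drops by two), so the pair (length, inversions) strictly decreases in the lexicographic order on $\N \times \N$; equivalently, induct on word length and, within a fixed length, on the number of inversions. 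With that repair, your argument is complete and is essentially the standard straightening-plus-parity proof one would expect the cited reference to contain.
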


There are two generating sets of interest to us for the $\K$-algebra $\A$. The first generating set was given in \cite{Bockting-Conrad2013}. We include a proof for completeness.

\begin{lemma}{\rm \cite[Proposition 5.4]{Bockting-Conrad2013}}\label{lem:nugenset}
The $\K$-algebra $\A$ is generated by $\nu_x, \nu_y, \nu_z$.
\end{lemma}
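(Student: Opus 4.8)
The plan is to prove the two inclusions separately. Write $\mathcal{B}$ for the unital $\K$-subalgebra of $U_q(\mathfrak{sl}_2)$ generated by $\nu_x, \nu_y, \nu_z$; the goal is to show $\mathcal{B} = \A$.

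First I would check $\mathcal{B} \subseteq \A$, for which it suffices to see that each generator lies in $\A$. Using $\nu_x = q(1-yz)$, $\nu_y = q^{-1}(1-xz)$, and $\nu_z = q(1-xy)$ from Definition \ref{def:nu}, each $\nu$ is a $\K$-linear combination of $1 = x^0y^0z^0$ and a normal-ordered monomial ($yz$, $xz$, or $xy$) of total degree $2$. Since $0$ and $2$ are even, all three generators lie in the span (\ref{eq:Abasis}) defining $\A$. As $\A$ is a subalgebra, it contains the subalgebra they generate, giving $\mathcal{B}\subseteq\A$.

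The substantive direction is $\A \subseteq \mathcal{B}$. Since $\A$ is spanned by the monomials $x^r y^s z^t$ with $r+s+t$ even, it is enough to show each such monomial lies in $\mathcal{B}$. The key observation is that every two-letter word in the alphabet $\{x,y,z\}$ lies in $\mathcal{B}$: the words $xy,yx,yz,zy,zx,xz$ are expressed as $\K$-linear combinations of $1$ and a single $\nu$ by Lemma \ref{lem:rel1}, while $x^2,y^2,z^2$ are expressed as polynomials in $\nu_x,\nu_y,\nu_z$ by Lemma \ref{lem:x2innu}. Now view a basis monomial $x^r y^s z^t$ with $r+s+t$ even as a word of even length $n=r+s+t$ in $\{x,y,z\}$, and factor it into $n/2$ consecutive two-letter blocks. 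Each block lies in $\mathcal{B}$, and $\mathcal{B}$ is closed under multiplication, so the whole monomial lies in $\mathcal{B}$. This yields $\A\subseteq\mathcal{B}$, and combined with the first inclusion, $\A=\mathcal{B}$.

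I do not anticipate a serious obstacle here; the argument is essentially the pairing trick together with the previously established degree-$2$ identities. The only point requiring care is the parity bookkeeping: one must confirm that the two-letter blocks used (in particular the diagonal blocks $x^2,y^2,z^2$, which need Lemma \ref{lem:x2innu} rather than the simpler Lemma \ref{lem:rel1}) indeed cover every even-length monomial, and that restricting to even total degree is exactly what makes the blocking into length-two factors possible. An odd-length word could not be factored this way, which is consistent with $\nu_x,\nu_y,\nu_z$ generating only the even subalgebra rather than all of $U_q(\mathfrak{sl}_2)$.
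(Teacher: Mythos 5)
Your proposal is correct and matches the paper's own proof essentially step for step: both directions are handled the same way, with the containment $\A \subseteq \mathcal{B}$ established by factoring each even-degree monomial $x^r y^s z^t$ into two-letter blocks and invoking Lemma \ref{lem:rel1} for the mixed blocks and Lemma \ref{lem:x2innu} for $x^2, y^2, z^2$. The only cosmetic difference is that you note all nine two-letter words lie in $\mathcal{B}$, whereas the paper lists only the six blocks ($x^2, xy, y^2, yz, z^2, xz$) that actually arise from a normal-ordered monomial.
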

\begin{proof}% This proof is essentially copied from the source material and adapted to case (ii).
Let $W$ denote the subalgebra of $U_q(\mathfrak{sl}_2)$ generated by $\nu_x, \nu_y, \nu_z$. We show that $W = \A$. Certainly $W \subseteq \A$ since each of $\nu_x, \nu_y, \nu_z$ is contained in $\A$ by Definition \ref{def:A} and Definition \ref{def:nu}. We now show that $W \supseteq \A$. By Definition \ref{def:A}, $\A$ is spanned by the elements (\ref{eq:Abasis}). Let $x^ry^sz^t$ denote an element of (\ref{eq:Abasis}). Then $r+s+t = 2n$ is even. Write $x^r y^s z^t = g_1 g_2 \cdots g_n$ such that $g_i$ is among
\begin{equation}\label{eq:baseterms}
x^2, \quad xy, \quad y^2, \quad yz, \quad z^2, \quad xz
\end{equation}
for $1 \le i \le n$. By Lemma \ref{lem:rel1} and Lemma \ref{lem:x2innu}, each term in (\ref{eq:baseterms}) is contained in $W$. Therefore $W$ contains $g_i$ for $1\le i \le n$, so $W$ contains $x^r y^s z^t$. Consequently, $W \supseteq \A$, so $W = \A$.
\end{proof}

In the case where $q^4 \ne 1$, we have a second generating set of interest.
\begin{corollary}\label{cor:x2genset}
The $\K$-algebra $\A$ is generated by $x^2, y^2, z^2$ provided that $q^4 \ne 1$.
\end{corollary}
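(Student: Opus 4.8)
The plan is to deduce Corollary~\ref{cor:x2genset} from Lemma~\ref{lem:nugenset} together with Lemma~\ref{lem:nuinx2}. Since we already know from Lemma~\ref{lem:nugenset} that $\nu_x,\nu_y,\nu_z$ generate $\A$, it suffices to show that the subalgebra $W'$ of $U_q(\mathfrak{sl}_2)$ generated by $x^2,y^2,z^2$ contains $\nu_x,\nu_y,\nu_z$ and is contained in $\A$; then $W' \subseteq \A$ and $W' \supseteq \A$, forcing $W' = \A$.

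First I would note that $x^2, y^2, z^2$ lie in $\A$: each has the form $x^ry^sz^t$ with $r+s+t=2$, so by Definition~\ref{def:A} they are among the spanning elements~(\ref{eq:Abasis}), and hence $W' \subseteq \A$. Next, assuming $q^4 \ne 1$ so that $q^2 - q^{-2} \ne 0$ and $q + q^{-1} \ne 0$, I invoke Lemma~\ref{lem:nuinx2}: equation~(\ref{eq:nuinx2a}) expresses $(q+q^{-1})\nu_x$ as a $\K$-linear combination of $1$, $y^2z^2$, and $z^2y^2$, all of which lie in $W'$; dividing by the nonzero scalar $q+q^{-1}$ shows $\nu_x \in W'$. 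The same argument with~(\ref{eq:nuinx2b}) and~(\ref{eq:nuinx2c}) gives $\nu_y, \nu_z \in W'$. Therefore $W'$ contains the generating set $\{\nu_x,\nu_y,\nu_z\}$ of $\A$, so $\A \subseteq W'$, and combining the two inclusions yields $W' = \A$.

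There is no real obstacle here — the corollary is an immediate consequence of the already-established Lemmas~\ref{lem:nugenset} and~\ref{lem:nuinx2}, and the only point requiring the hypothesis $q^4 \ne 1$ is that it guarantees the denominators $q^2-q^{-2}$ and the coefficient $q+q^{-1}$ appearing in Lemma~\ref{lem:nuinx2} are invertible, which is exactly why that hypothesis is carried into the statement. The one thing to be slightly careful about is making the logical structure explicit: we are not re-proving that $\{x^2,y^2,z^2\}$ generates $\A$ from scratch via the basis~(\ref{eq:Abasis}), but rather bootstrapping off the $\nu$-generating set, which makes the proof a one-line reduction.
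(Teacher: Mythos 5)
Your proof is correct and follows essentially the same route as the paper: both arguments use Lemma~\ref{lem:nuinx2} (with $q+q^{-1}\ne 0$, guaranteed by $q^4\ne 1$) to place $\nu_x,\nu_y,\nu_z$ in the subalgebra generated by $x^2,y^2,z^2$, and then invoke Lemma~\ref{lem:nugenset}. The only cosmetic difference is that the paper establishes the reverse containment via Lemma~\ref{lem:x2innu} (showing the two sets generate the same subalgebra), whereas you observe directly from Definition~\ref{def:A} that $x^2,y^2,z^2\in\A$; both are valid.
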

\begin{proof}
By Lemma \ref{lem:x2innu}, each of $x^2, y^2, z^2$ is contained in the subalgebra of $U_q(\mathfrak{sl}_2)$ generated by $\nu_x, \nu_y, \nu_z$. By Lemma \ref{lem:nuinx2}, each of $\nu_x, \nu_y, \nu_z$ is contained in the subalgebra of $U_q(\mathfrak{sl}_2)$ generated by $x^2, y^2, z^2$. Thus, $\nu_x, \nu_y, \nu_z$ and $x^2, y^2, z^2$ generate the same subalgebra of $U_q(\mathfrak{sl}_2)$. By Lemma \ref{lem:nugenset}, this implies that $x^2, y^2, z^2$ generate $\A$.
\end{proof}

%\begin{lemma}
%The $\K$-algebra generated by $x^2, y^2, z^2$ is a proper subalgebra of $\A$ whenever $q^4 = 1$.
%\end{lemma}
%\begin{proof}
%
%\end{proof}

\begin{lemma}\label{lem:Abasis}
The elements {\rm (\ref{eq:Abasis})} are a basis for the $\K$-vector space $\A$.
\end{lemma}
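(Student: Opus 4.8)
The plan is to show that the elements (\ref{eq:Abasis}) are linearly independent in $U_q(\mathfrak{sl}_2)$; together with the fact that they span $\A$ (Definition \ref{def:A}), this will establish that they form a basis. Linear independence will follow immediately from Lemma \ref{lem:Uqsl2basis}, which asserts that the elements $x^ry^sz^t$ with $r,t\in\N$ and $s\in\Z$ form a basis for $U_q(\mathfrak{sl}_2)$. Indeed, the elements (\ref{eq:Abasis}) are precisely the subset of this basis obtained by restricting to $s\ge 0$ and imposing the parity condition $r+s+t$ even, so they are a subset of a linearly independent set and hence themselves linearly independent.

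Concretely, I would argue as follows. Let $S$ denote the set of elements in (\ref{eq:Abasis}), i.e.\ $S = \{x^ry^sz^t : r,s,t\in\N,\ r+s+t\ \text{even}\}$, and let $B = \{x^ry^sz^t : r,t\in\N,\ s\in\Z\}$ denote the basis from Lemma \ref{lem:Uqsl2basis}. Since $\N\subseteq\Z$, every element of $S$ lies in $B$, so $S\subseteq B$. As $B$ is linearly independent over $\K$, its subset $S$ is linearly independent over $\K$ as well. By Definition \ref{def:A}, $S$ spans $\A$. Hence $S$ is a linearly independent spanning set for $\A$, i.e.\ a basis for the $\K$-vector space $\A$, as claimed.

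There is essentially no obstacle here: the lemma is a bookkeeping consequence of the already-established basis theorem for $U_q(\mathfrak{sl}_2)$ and the definition of $\A$. The only point requiring (minimal) care is the observation that the defining spanning set for $\A$ in Definition \ref{def:A} is literally a subset of the PBW-type basis in Lemma \ref{lem:Uqsl2basis}, which is true because the parity-even positive exponent triples $(r,s,t)$ with $r,s,t\in\N$ form a subset of $\{(r,s,t):r,t\in\N,\ s\in\Z\}$. I would state the proof in two or three sentences without further computation.
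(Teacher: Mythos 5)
Your proof is correct and is exactly the argument the paper intends: the paper's own proof simply says ``Follows from Definition \ref{def:A} and Lemma \ref{lem:Uqsl2basis},'' and your write-up spells out precisely that reasoning (the spanning set of Definition \ref{def:A} is a subset of the basis in Lemma \ref{lem:Uqsl2basis}, hence linearly independent). No issues.
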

\begin{proof}
Follows from Definition \ref{def:A} and Lemma \ref{lem:Uqsl2basis}.
\end{proof}

It will be useful to have a basis for the $\K$-vector space $\A$ expressed in terms of the elements  $\nu_x, \nu_y, \nu_z$ and $x^2, y^2, z^2$ of $\A$.
\begin{lemma}
The following is a basis for the $\K$-vector space $\A$:
\begin{equation}\label{eq:allowedwords}
\begin{aligned}
    x^{2r} \nu_z^{\delta_1} y^{2s} \nu_x^{\delta_2} z^{2t} \qquad &r, s, t \in \N, \ \delta_1, \delta_2 \in \{0, 1\},\\
    x^{2r} \nu_y z^{2t} \qquad &r, t \in \N.
\end{aligned}
\end{equation}
\end{lemma}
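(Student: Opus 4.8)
The plan is to show that the elements in (\ref{eq:allowedwords}) both span $\A$ and are linearly independent, using the known basis (\ref{eq:Abasis}) from Lemma \ref{lem:Abasis} as a reference. Since the number of words of the first type with a fixed total degree $2r + 2\delta_1 + 2s + 2\delta_2 + 2t \le 2N$ is exactly... well, rather than counting, the cleaner route is to exhibit an invertible "change of basis" between (\ref{eq:Abasis}) and (\ref{eq:allowedwords}) that respects a suitable filtration, so that spanning alone suffices (a spanning set inside a vector space, which maps onto a basis under a degree-filtered triangular transformation, is automatically a basis).

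First I would set up a grading/filtration on $U_q(\mathfrak{sl}_2)$: assign degree $1$ to each of $x, y, z$, so that $x^r y^s z^t$ has degree $r+s+t$, and let $\A_{\le n}$ be the span of the basis elements (\ref{eq:Abasis}) of degree at most $n$. From (\ref{eq:nux})--(\ref{eq:nuz}) we have $\nu_x = q(1-yz)$, etc., so $\nu_x \equiv -q\, yz$, $\nu_y \equiv -q\, zx$, $\nu_z \equiv -q\, xy$ modulo lower-degree terms; and $x^2, y^2, z^2$ are homogeneous of degree $2$. Thus each word in (\ref{eq:allowedwords}) has a well-defined top degree, and its top-degree part is (a nonzero scalar times) a monomial in $x, y, z$ — but note $\nu_x \sim yz$, $\nu_y \sim zx$, $\nu_z \sim xy$ are \emph{not} in the standard order $x^r y^s z^t$, so I will need the reordering relations of Lemma \ref{lem:reform} to rewrite each top monomial in the standard form, which only introduces strictly-lower-degree corrections.

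Second, for the \textbf{spanning} claim, I would argue by induction on degree that every standard monomial $x^r y^s z^t$ with $r+s+t$ even lies in the span of (\ref{eq:allowedwords}). Using Lemma \ref{lem:rel1} to write $yz = 1-q^{-1}\nu_x$, $zx = 1-q^{-1}\nu_y$, $xy = 1-q^{-1}\nu_z$, and Lemma \ref{lem:reform} to move things into standard order, one can successively "collapse" a standard monomial: a block $\cdots z y \cdots$ (with a $z$ immediately followed by a $y$) can be turned into $\cdots \nu_x \cdots$ plus lower-degree terms, and similarly for $xz \to \nu_y$ and $yx \to \nu_z$. Repeatedly applying these, together with the commutation relations from Lemma \ref{lem:rel2a} (which let $\nu_x$ pass $y^2$, $\nu_y$ pass $z^2$, $\nu_z$ pass $x^2$ up to powers of $q$) and Lemma \ref{lem:x2innu} (which expresses $x^2, y^2, z^2$ in the $\nu$'s — useful to reduce the number of distinct $\nu$-letters), one pushes any word into the two prescribed shapes $x^{2r}\nu_z^{\delta_1}y^{2s}\nu_x^{\delta_2}z^{2t}$ and $x^{2r}\nu_y z^{2t}$. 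The bookkeeping here is the routine part; the point is that every rewriting step strictly decreases degree in the "error" term, so induction closes. I would also use Lemma \ref{lem:rel4} / Lemma \ref{lem:defrel1} to handle the cases where two $\nu$-letters of the same "color" or an unwanted adjacency like $\nu_x \nu_z$ appears, rewriting it back down to the allowed list.

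Third, for \textbf{linear independence}: the words (\ref{eq:allowedwords}) are indexed by $(r,s,t,\delta_1,\delta_2)$ in the first family and $(r,t)$ in the second; I claim the map sending each such word to the standard monomial obtained from its top-degree part is injective on indices and degree-preserving, hence the transition matrix from (\ref{eq:allowedwords}) to (\ref{eq:Abasis}), ordered by degree, is block upper-triangular with invertible (indeed monomial, nonzero-scalar) diagonal blocks. Concretely, $x^{2r}\nu_z^{\delta_1}y^{2s}\nu_x^{\delta_2}z^{2t}$ has top term a scalar multiple of $x^{2r}(xy)^{\delta_1}y^{2s}(yz)^{\delta_2}z^{2t} = x^{2r+\delta_1}y^{2s+\delta_1+\delta_2}z^{2t+\delta_2}$, and $x^{2r}\nu_y z^{2t}$ has top term a scalar multiple of $x^{2r}(zx)z^{2t} = x^{2r+1}z^{2t+1}$ after reordering. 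One checks these standard monomials are pairwise distinct: within the first family $(2r+\delta_1, 2s+\delta_1+\delta_2, 2t+\delta_2)$ determines $(r,s,t,\delta_1,\delta_2)$ since $\delta_1 = $ parity of the first exponent, $\delta_2 = $ parity of the third; and a first-family top monomial has total exponent $2(r+s+t)+2\delta_1$... hmm, actually $2r+\delta_1 + 2s+\delta_1+\delta_2 + 2t+\delta_2 = 2(r+s+t+\delta_1+\delta_2)$ is even, while a second-family top monomial $x^{2r+1}z^{2t+1}$ has a middle exponent $0$ but first and third exponents odd — and an odd first exponent already forces $\delta_1 = 1$ in the first family, yet then the middle exponent $2s+1+\delta_2$ is odd, whereas the second family has middle exponent $0$; so no collision. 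Given injectivity of the index-to-top-monomial map and degree-preservation, independence of (\ref{eq:allowedwords}) follows from independence of the $x^r y^s z^t$ (Lemma \ref{lem:Abasis}) by the standard triangularity argument.

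\textbf{Main obstacle.} The genuine work is the spanning induction: organizing the rewriting of an arbitrary standard monomial $x^r y^s z^t$ (even total degree) into the two allowed shapes, making sure that (a) at most one $\nu_z$ and one $\nu_x$ and never a $\nu_y$ survive in the first family (or exactly one $\nu_y$ and nothing else in the second), and (b) every use of Lemmas \ref{lem:reform}, \ref{lem:rel1}, \ref{lem:rel2a}, \ref{lem:x2innu}, \ref{lem:rel4} produces only strictly-lower-degree correction terms so that the induction on degree is legitimate. Keeping the "at most one $\nu$ of each allowed color" bookkeeping under control — in particular knowing that a second occurrence of $\nu_z$ (or $\nu_x$, or any $\nu_y$) can always be traded, via Lemma \ref{lem:x2innu} and the relations of Lemma \ref{lem:rel4}, for lower-degree terms plus an allowed word — is where I expect to spend essentially all the effort; everything else (the filtration setup, the top-term computation, the triangularity) is formal.
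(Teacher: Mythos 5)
Your core mechanism is exactly the paper's: filter $\A$ by total degree in $x,y,z$, observe that each word in (\ref{eq:allowedwords}) equals a nonzero scalar times a leading monomial $x^{2r+\delta_1}y^{2s+\delta_1+\delta_2}z^{2t+\delta_2}$ (resp.\ $x^{2r+1}z^{2t+1}$) plus strictly lower-degree terms, and check that these leading monomials are exactly the basis (\ref{eq:Abasis}). The difference is that the paper stops there, because this single observation already yields \emph{both} conclusions: linear independence by the triangularity you describe, and spanning by induction on degree, since every basis element of (\ref{eq:Abasis}) of degree $N$ is hit bijectively by a leading term and the correction lies in $\sum_{n<N}W_n$, which is already spanned by induction. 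Your second paragraph --- the rewriting induction that collapses $zy\mapsto\nu_x$, $xz\mapsto\nu_y$, $yx\mapsto\nu_z$ and shuffles letters into the two allowed shapes, which you correctly identify as where ``essentially all the effort'' would go --- is therefore redundant; you have proved spanning the hard way and then independence the easy way, when the easy way gives both at once. (One small caveat for the bijectivity: you verify injectivity of the leading-term map carefully, but you should also note surjectivity onto (\ref{eq:Abasis}), which follows from the parity bookkeeping you already set up: given $x^ay^bz^c$ with $a+b+c$ even, read off $\delta_1,\delta_2$ from the parities of $a,c$ and check $b\ge\delta_1+\delta_2$, the exceptional case $a,c$ odd, $b=0$ being the second family.) The rewriting machinery you sketch is not wasted conceptually --- it is essentially what the paper must do later, in the proof of Lemma \ref{lem:spanningset}, where the same statement is needed for the abstractly presented algebra $\A^\prime$ and no ambient basis is available; but inside $\A$, where Lemma \ref{lem:Abasis} is already in hand, it is unnecessary.
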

\begin{proof}
For $n \in \N$, let $W_n$ denote the $\K$-subspace of $\A$ spanned by the elements $x^r y^s z^t$ of (\ref{eq:Abasis}) that satisfy $r+s+t = n$. By Lemma \ref{lem:Abasis}, the sum $\A = \sum_{n=0}^\infty W_n$ is direct.

\medskip
Let $w$ be an element of (\ref{eq:allowedwords}). First, assume $w = x^{2r}\nu_z^{\delta_1} y^{2s} \nu_x^{\delta_2} y^{2t}$ and let $N = 2(r+s+t+\delta_1+\delta_2)$. Using Definition \ref{def:nu}, we find that $w \in \sum_{n=0}^N W_n$ and

\[
	w - q^{\delta_1+\delta_2} x^{2r+\delta_1}y^{2s+\delta_1+\delta_2}z^{2t+\delta_2} \in \sum_{n=0}^{N-1}W_n.
\]

Next, assume $w = x^{2r} \nu_y z^{2t}$ and let $N = 2(r+t+1)$. Using Definition \ref{def:nu}, we find that $w \in \sum_{n=0}^N W_n$ and
\[
	w - q^{-1}x^{2r+1}z^{2t+1} \in \sum_{n=0}^{N-1} W_n.
\]

The set
\begin{align*}
x^{2r+\delta_1}y^{2s+\delta_1+\delta_2}z^{2t+\delta_2} \qquad &r, s,t \in \N, \ \delta_1, \delta_2 \in \{0, 1\}\\
x^{2r+1} z^{2t+1} \qquad &r, t \in \N
\end{align*}
is exactly the basis (\ref{eq:Abasis}). The result follows.
\end{proof}

% A Presentation for A
\section{A presentation for \texorpdfstring{$\A$}{A}}

We saw in Lemma \ref{lem:nugenset} that $\nu_x, \nu_y, \nu_z$ generate the $\K$-algebra $\A$. In Lemma \ref{lem:defrel1} and Lemma \ref{lem:defrel2}, we gave twelve relations (\ref{eq:defrel1a})--(\ref{eq:defrel2f}) satisfied by $\nu_x, \nu_y, \nu_z$. In this section, we show that the generators $\nu_x, \nu_y, \nu_z$ and relations (\ref{eq:defrel1a})--(\ref{eq:defrel2f}) give a presentation for the $\K$-algebra $\A$.

\begin{definition}\label{def:Aprime}\rm
Let $\A^\prime$ denote the unital associative $\K$-algebra with generators $\nu_x, \nu_y, \nu_z$ and relations {\rm (\ref{eq:defrel1a})--(\ref{eq:defrel2f})}.
\end{definition}

We will show that the $\K$-algebras $\A$ and $\A^\prime$ are isomorphic. We define elements $x^2, y^2, z^2$ of $\A^\prime$ in the following way.

\begin{definition}\label{def:squaredinAprime} \rm
Define elements $x^2, y^2, z^2$ of $\A^\prime$ by
\begin{align*}
x^2 &= 1 - \frac{q\nu_y\nu_z - q^{-1}\nu_z\nu_y}{q-q^{-1}},\\
y^2 &= 1 - \frac{q\nu_z\nu_x - q^{-1}\nu_x\nu_z}{q-q^{-1}},\\
z^2 &= 1 - \frac{q\nu_x\nu_y - q^{-1}\nu_y\nu_x}{q-q^{-1}}.
\end{align*}
\end{definition}

\begin{note}\rm
Referring to Definition \ref{def:squaredinAprime}, we emphasize that we are viewing $x^2, y^2, z^2$ as symbols representing elements of $\A^\prime$.  We do not attach any meaning to the symbols $x, y, z$ in the context of $\A^\prime$.
\end{note}

\medskip
In Lemmas \ref{lem:rel2} -- \ref{lem:x2innu}, we gave relations satisfied by the elements $\nu_x, \nu_y, \nu_z, x^2, y^2, z^2$ of $U_q(\mathfrak{sl}_2)$. We now show that these relations are satisfied by the corresponding elements of $\A^\prime$.

\begin{lemma}\label{lem:Aprimerels1a}
The relations {\rm (\ref{eq:x2innua})--(\ref{eq:x2innuc})} hold in $\A^\prime$.
\end{lemma}
\begin{proof}
Immediate from Definition \ref{def:squaredinAprime}.
\end{proof}

\begin{lemma}  \label{lem:Aprimerels1}
The relations {\rm (\ref{eq:rel2a})--(\ref{eq:rel2c})} hold in $\A^\prime$.
\end{lemma}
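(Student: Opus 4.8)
The goal is to show that the relations $x^2\nu_y = q^4\nu_y x^2$, $x^2\nu_z = q^{-4}\nu_z x^2$, and their cyclic permutations (\ref{eq:rel2a})--(\ref{eq:rel2c}) hold in $\A^\prime$, where now $x^2,y^2,z^2$ are the \emph{defined} elements from Definition \ref{def:squaredinAprime} rather than genuine squares of equitable generators. The plan is to verify one representative relation, say $x^2\nu_y = q^4 \nu_y x^2$, by direct computation using only the defining relations of $\A^\prime$ and Definition \ref{def:squaredinAprime}; the remaining five then follow by the cyclic symmetry $\nu_x \to \nu_y \to \nu_z \to \nu_x$ and the associated relabeling of (\ref{eq:defrel1a})--(\ref{eq:defrel2f}), which is built into the symmetric way those twelve relations are stated.

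First I would substitute the expression for $x^2$ from Definition \ref{def:squaredinAprime}, reducing the claim $x^2\nu_y = q^4\nu_y x^2$ to an identity purely among $\nu_x,\nu_y,\nu_z$:
\begin{equation*}
\left(1 - \frac{q\nu_y\nu_z - q^{-1}\nu_z\nu_y}{q-q^{-1}}\right)\nu_y = q^4 \nu_y\left(1 - \frac{q\nu_y\nu_z - q^{-1}\nu_z\nu_y}{q-q^{-1}}\right).
\end{equation*}
After clearing the factor $q-q^{-1}$ and moving everything to one side, this becomes a relation that is cubic in the $\nu$'s, with the degree-one terms $(1-q^4)\nu_y$ and the degree-three terms being $\K$-linear combinations of the monomials $\nu_y\nu_z\nu_y$, $\nu_z\nu_y^2$, $\nu_y^2\nu_z$. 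The key observation is that exactly these monomials appear in relations (\ref{eq:defrel1d}) and (\ref{eq:defrel1e}) (the $q^{\mp3}$ family involving $\nu_y$ on the right-hand side), so I would use those two defining relations to rewrite the cubic terms and check that everything collapses to an identity.

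The main obstacle is bookkeeping: matching the coefficients produced by expanding the left- and right-hand sides against the precise coefficients $q^3, -(q+q^{-1}), q^{-3}$ (and their reciprocals) in the relevant defining relations, so that the cubic contributions cancel and the surviving linear term reproduces $(1-q^4)\nu_y$ with the correct sign. Concretely, I expect to need to solve for the combination $\nu_y\nu_z\nu_y$ from one defining relation and substitute, then recognize the leftover as a scalar multiple of another defining relation or of $\nu_y$ itself. Once the representative case is done, I would simply remark that the other five relations in (\ref{eq:rel2a})--(\ref{eq:rel2c}) are obtained by the same calculation under cyclic rotation of the indices, since the defining relations (\ref{eq:defrel1a})--(\ref{eq:defrel1f}) and (\ref{eq:defrel2a})--(\ref{eq:defrel2f}) are invariant under this rotation.
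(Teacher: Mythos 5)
Your overall strategy is the paper's: substitute the defined element $x^2$ from Definition \ref{def:squaredinAprime} into $x^2\nu_y - q^4\nu_y x^2$, expand, and collapse the resulting cubic terms using the defining relations of $\A^\prime$. But your stated key step would fail. The cubic monomials that actually appear in the expansion are $\nu_y^2\nu_z$, $\nu_y\nu_z\nu_y$, $\nu_z\nu_y^2$, and these occur in \emph{neither} of the relations you cite: (\ref{eq:defrel1d}) involves only monomials in $\nu_x,\nu_y$ (so $\nu_x$ would have to appear in the expansion, and it does not), and (\ref{eq:defrel1e}) involves $\nu_z^2\nu_y$, $\nu_z\nu_y\nu_z$, $\nu_y\nu_z^2$, in which $\nu_z$ rather than $\nu_y$ is the quadratic variable. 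The unique defining relation containing your three monomials is (\ref{eq:defrel1b}), and with it the computation closes in one step:
\begin{align*}
x^2\nu_y - q^4\nu_y x^2 &= (1-q^4)\nu_y + q^2\,\frac{q^3\nu_y^2\nu_z - (q+q^{-1})\nu_y\nu_z\nu_y + q^{-3}\nu_z\nu_y^2}{q-q^{-1}}\\
&= (1-q^4)\nu_y + q^2(q^2-q^{-2})\nu_y = 0,
\end{align*}
which is exactly the paper's proof. So the gap is a misidentification of the needed relation rather than a flawed method; following your citations literally would lead nowhere, since no linear combination of (\ref{eq:defrel1d}) and (\ref{eq:defrel1e}) produces the monomials in question.

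A second, smaller gap: cyclic symmetry $\nu_x\to\nu_y\to\nu_z\to\nu_x$ does not reduce all six relations to one representative. Each of (\ref{eq:rel2a})--(\ref{eq:rel2c}) contains two relations, and the two members of a single line, e.g.\ $x^2\nu_y = q^4\nu_y x^2$ and $x^2\nu_z = q^{-4}\nu_z x^2$, are not cyclic images of one another. You therefore need two representative computations: the one above using (\ref{eq:defrel1b}), and a companion computation of $x^2\nu_z - q^{-4}\nu_z x^2$, whose cubic terms $\nu_y\nu_z^2$, $\nu_z\nu_y\nu_z$, $\nu_z^2\nu_y$ collapse via (\ref{eq:defrel1e}) (so that citation of yours is the right one, but for the other half of (\ref{eq:rel2a})). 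Cyclic rotation then yields the remaining four relations.
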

\begin{proof}
To show that the equation on the left in (\ref{eq:rel2a}) holds in $\A^\prime$, we show that $x^2 \nu_y - q^4 \nu_y x^2 = 0$. By Definition \ref{def:squaredinAprime} and (\ref{eq:defrel1b}),
 \begin{align*}
    x^2 \nu_y - q^4 \nu_y x^2 &= \left(1 - \frac{q\nu_y\nu_z - q^{-1}\nu_z\nu_y}{q-q^{-1}}\right)\nu_y - q^4 \nu_y \left(1 - \frac{q\nu_y\nu_z - q^{-1}\nu_z\nu_y}{q-q^{-1}}\right)\\
        &= (1-q^4)\nu_y + q^2 \frac{q^3\nu_y^2\nu_z - (q+q^{-1})\nu_y\nu_z\nu_y + q^{-3}\nu_z\nu_y^2}{q-q^{-1}}\\
        &= (1-q^4)\nu_y + q^2 \frac{(q^2-q^{-2})(q-q^{-1})\nu_y}{q-q^{-1}}\\
        &= 0.
 \end{align*}
The remaining relations are similarly obtained.
\end{proof}

\begin{lemma} \label{lem:Aprimerels2}
The relations {\rm (\ref{eq:x2nuxa})--(\ref{eq:x2nuxf})} hold in $\A^\prime$.
\end{lemma}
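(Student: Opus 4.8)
The goal is to verify that the six relations (\ref{eq:x2nuxa})--(\ref{eq:x2nuxf}) hold in the abstract algebra $\A^\prime$, where now $x^2, y^2, z^2$ are the specific elements of $\A^\prime$ introduced in Definition \ref{def:squaredinAprime}. The plan is to prove the first relation (\ref{eq:x2nuxa}) directly and then remark that the others follow by the same method, exploiting the cyclic symmetry $x \to y \to z \to x$ (equivalently $\nu_x \to \nu_y \to \nu_z \to \nu_x$) that permutes the defining relations among themselves, together with the reversal symmetry relating the first three relations to the second three.

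For (\ref{eq:x2nuxa}), I would start by substituting the expression for $x^2$ from Definition \ref{def:squaredinAprime} into the left-hand side, so that
\[
x^2 \nu_x = \nu_x - \frac{q\nu_y\nu_z - q^{-1}\nu_z\nu_y}{q-q^{-1}}\,\nu_x.
\]
The key move is that the second term is exactly the left-hand side of relation (\ref{eq:defrel2d}) from Lemma \ref{lem:defrel2}, which is one of the defining relations of $\A^\prime$. Substituting (\ref{eq:defrel2d}) gives
\[
x^2 \nu_x = \nu_x - \left(\nu_x - q^2\nu_y - q^{-2}\nu_z + \frac{q^2\nu_y\nu_z - q^{-2}\nu_z\nu_y}{q-q^{-1}}\right),
\]
after which the two copies of $\nu_x$ cancel and I am left with $q^2\nu_y + q^{-2}\nu_z$ minus the displayed fraction. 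The remaining task is purely formal: I must show this equals the target right-hand side $q^{-1}x^2 - q^{-1} + q^2\nu_y + q^{-2}\nu_z - q\nu_y\nu_z$. Using Definition \ref{def:squaredinAprime} once more to rewrite $q^{-1}x^2 - q^{-1}$ as $-q^{-1}\frac{q\nu_y\nu_z - q^{-1}\nu_z\nu_y}{q-q^{-1}}$, the verification reduces to the scalar identity
\[
-\frac{q^2\nu_y\nu_z - q^{-2}\nu_z\nu_y}{q-q^{-1}} = -q^{-1}\,\frac{q\nu_y\nu_z - q^{-1}\nu_z\nu_y}{q-q^{-1}} - q\nu_y\nu_z,
\]
which I would confirm by comparing the coefficients of $\nu_y\nu_z$ and of $\nu_z\nu_y$ on each side. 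This is the step where an arithmetic slip is most likely, but it is elementary.

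I do not expect any genuine obstacle here, since each relation of type (\ref{eq:x2nuxa})--(\ref{eq:x2nuxf}) is designed to be a direct consequence of pairing one relation of Definition \ref{def:squaredinAprime} with one relation from Lemma \ref{lem:defrel2}: the three ``left'' relations (\ref{eq:x2nuxa})--(\ref{eq:x2nuxc}) use (\ref{eq:defrel2d})--(\ref{eq:defrel2f}), while the three ``right'' relations (\ref{eq:x2nuxd})--(\ref{eq:x2nuxf}) use (\ref{eq:defrel2a})--(\ref{eq:defrel2c}). The only point requiring mild care is bookkeeping the exact powers of $q$ (for instance distinguishing the $q^2\nu_y + q^{-2}\nu_z$ pattern in the ``left'' case from the $q^{-2}\nu_y + q^2\nu_z$ pattern in the ``right'' case), and matching the appropriate cyclic image of the defining relations to each target. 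Having carried out (\ref{eq:x2nuxa}) in full, I would invoke the symmetry to dispatch the remaining five with the standard phrase that they are similarly obtained.
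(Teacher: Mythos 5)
Your proposal is correct and follows essentially the same route as the paper: substitute Definition \ref{def:squaredinAprime} for $x^2$, apply the defining relation (\ref{eq:defrel2d}) to the term $\frac{q\nu_y\nu_z - q^{-1}\nu_z\nu_y}{q-q^{-1}}\,\nu_x$, and then use Definition \ref{def:squaredinAprime} once more to reintroduce $x^2$, with the remaining five relations dispatched by symmetry. The only cosmetic difference is that the paper pushes the computation forward to the target expression, whereas you reduce the last step to a coefficient check on $\nu_y\nu_z$ and $\nu_z\nu_y$; the underlying algebra is identical.
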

\begin{proof}
We show that (\ref{eq:x2nuxa}) holds in $\A^\prime$. By Definition \ref{def:squaredinAprime} and (\ref{eq:defrel2d}),
\begin{align*}
x^2 \nu_x &= \left(1-\frac{q\nu_y\nu_z - q^{-1}\nu_z\nu_y}{q-q^{-1}}\right)\nu_x\\
        &= \nu_x - \left(\nu_x - q^2\nu_y - q^{-2}\nu_z + \frac{q^2\nu_y\nu_z - q^{-2}\nu_z\nu_y}{q-q^{-1}}\right)\\
        &= q^2 \nu_y + q^{-2}\nu_z - q^{-1}\frac{q\nu_y\nu_z-q^{-1}\nu_z\nu_y}{q-q^{-1}}-q\nu_y\nu_z\\
        &= q^2 \nu_y + q^{-2}\nu_z - q^{-1}(1-x^2)-q\nu_y\nu_z\\
        &= q^{-1}x^2 - q^{-1} + q^2 \nu_y + q^{-2}\nu_z - q\nu_y\nu_z.
\end{align*}
The remaining relations are similarly obtained.
\end{proof}

\begin{lemma}  \label{lem:Aprimerels3}
The relations {\rm (\ref{eq:squareda})--(\ref{eq:squaredf})} hold in $\A^\prime$.
\end{lemma}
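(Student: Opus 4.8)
The goal is to show that relations (\ref{eq:squareda})--(\ref{eq:squaredf}) hold in $\A^\prime$. Each of these expresses a product such as $x^2 y^2$ in terms of $\nu_z$ and $\nu_z^2$. The plan is to compute $x^2 y^2$ directly inside $\A^\prime$ by substituting the expressions from Definition \ref{def:squaredinAprime} and simplifying, using the relations already shown to hold in $\A^\prime$ (namely those from Lemmas \ref{lem:Aprimerels1a}, \ref{lem:Aprimerels1}, and \ref{lem:Aprimerels2}).

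First I would focus on (\ref{eq:squareda}), the relation $x^2 y^2 = 1 - q^{-2}(q+q^{-1})\nu_z + q^{-4}\nu_z^2$. The cleanest route is to mimic the proof of Lemma \ref{lem:rel3}, but now phrased entirely in $\A^\prime$ where we lack the generators $x, y, z$ individually. In $U_q(\mathfrak{sl}_2)$ the key identity used was $yx = q^2 xy - q^2 + 1$, which upon squaring via $(xy)^2 = x(yx)y$ produced the desired formula. Since in $\A^\prime$ we only have $x^2, y^2$ as defined symbols, I would instead work with the relation $x^2 \nu_z = q^{-4}\nu_z x^2$ from (\ref{eq:rel2a}) (now available by Lemma \ref{lem:Aprimerels1}) together with the expression for $y^2$ from Definition \ref{def:squaredinAprime}. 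The computation expands $x^2 y^2 = x^2\bigl(1 - \tfrac{q\nu_z\nu_x - q^{-1}\nu_x\nu_z}{q-q^{-1}}\bigr)$, and the cross terms $x^2\nu_z\nu_x$ and $x^2\nu_x\nu_z$ get rewritten using the commutation relation $x^2\nu_z = q^{-4}\nu_z x^2$ and the formula for $x^2\nu_x$ from (\ref{eq:x2nuxa}), eventually collapsing to a polynomial in $\nu_z$ alone.

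The main obstacle I anticipate is that, unlike in $U_q(\mathfrak{sl}_2)$ where one can freely manipulate the single generators $x, y, z$, here every step must be justified by one of the twelve defining relations (\ref{eq:defrel1a})--(\ref{eq:defrel2f}) or a previously established consequence. Tracking which combination of $x^2\nu_y$, $x^2\nu_z$, $x^2\nu_x$ relations to invoke, and in what order, so that the $\nu_x$ and $\nu_y$ contributions cancel and leave only terms in $\nu_z$ and $\nu_z^2$, is the delicate bookkeeping. A useful alternative that sidesteps some of this is to observe that Lemma \ref{lem:nuinx2} can be read as expressing $\nu_x, \nu_y, \nu_z$ in terms of the products $x^2 y^2, y^2 z^2$, etc.; since those identities are algebraic consequences of (\ref{eq:squareda})--(\ref{eq:squaredf}) by Lemma \ref{lem:nuinx2}'s proof, one may prefer to verify (\ref{eq:squareda})--(\ref{eq:squaredf}) and deduce consistency from the already-verified relations.

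Once (\ref{eq:squareda}) is established, relations (\ref{eq:squaredd})--(\ref{eq:squaredf}) and the remaining members of (\ref{eq:squareda})--(\ref{eq:squaredc}) follow by the cyclic symmetry $x \to y \to z \to x$ (equivalently $\nu_x \to \nu_y \to \nu_z \to \nu_x$) that permutes the defining relations among themselves, together with the $q \leftrightarrow q^{-1}$ symmetry that exchanges the two groups of relations; I would state that the remaining cases are handled analogously rather than repeat each computation. I expect the final writeup to verify (\ref{eq:squareda}) in full and then invoke symmetry, closing with a remark that all six relations thereby hold in $\A^\prime$.
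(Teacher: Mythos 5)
Your core computation is exactly the paper's own route: expand $x^2y^2 = x^2\bigl(1 - \tfrac{q\nu_z\nu_x - q^{-1}\nu_x\nu_z}{q-q^{-1}}\bigr)$ via Definition \ref{def:squaredinAprime} and push $x^2$ through the cross terms using (\ref{eq:rel2a}) and (\ref{eq:x2nuxa}). However, your claim that the expression then ``eventually collapses to a polynomial in $\nu_z$ alone'' under just those two relations is not accurate: after applying (\ref{eq:rel2a}) and (\ref{eq:x2nuxa}) one is left with the cubic terms $\nu_z\nu_y\nu_z$ and $\nu_y\nu_z^2$, and nothing in Lemmas \ref{lem:Aprimerels1a}, \ref{lem:Aprimerels1}, \ref{lem:Aprimerels2} (your stated toolkit) reduces a cubic in the $\nu$'s. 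The missing ingredient is one of the degree-three defining relations: the paper invokes (\ref{eq:defrel1e}) to rewrite $\tfrac{\nu_y\nu_z^2 - q^{-2}\nu_z\nu_y\nu_z}{q-q^{-1}}$ as $q^{-5}\nu_z\tfrac{q\nu_y\nu_z - q^{-1}\nu_z\nu_y}{q-q^{-1}} + q^{-3}(q^2-q^{-2})\nu_z$, and then substitutes $\tfrac{q\nu_y\nu_z - q^{-1}\nu_z\nu_y}{q-q^{-1}} = 1-x^2$ (Definition \ref{def:squaredinAprime} read in reverse) so that the $\nu_z x^2$ terms cancel. You do remark elsewhere that all twelve defining relations are available, so this is a repairable omission rather than a doomed strategy, but as written the computation stalls precisely at the cubic terms.

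Two further points. The aside proposing Lemma \ref{lem:nuinx2} as an alternative is circular: in $U_q(\mathfrak{sl}_2)$ that lemma is proved from Lemma \ref{lem:rel3}, i.e., from the very relations (\ref{eq:squareda})--(\ref{eq:squaredf}) you are trying to establish (and it also assumes $q^4 \neq 1$), so it sidesteps nothing and should be deleted. Your symmetry shortcut for the remaining five relations can be made to work but is imprecise as stated: the cyclic shift $\nu_x \to \nu_y \to \nu_z \to \nu_x$ does permute the defining relations (\ref{eq:defrel1a})--(\ref{eq:defrel2f}) among themselves and sends $x^2 \to y^2 \to z^2 \to x^2$, which yields (\ref{eq:squaredb}) and (\ref{eq:squaredc}); but $q \leftrightarrow q^{-1}$ by itself is \emph{not} a symmetry of the presentation --- it must be combined with the transposition $\nu_y \leftrightarrow \nu_z$, and since $q$ is a fixed scalar of $\K$ it gives an isomorphism between the algebra with parameter $q$ and the one with parameter $q^{-1}$, usable only because the verification of (\ref{eq:squareda}) is uniform in $q$. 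The paper avoids these subtleties by simply asserting that the other five relations are ``similarly obtained.''
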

\begin{proof}
We show that (\ref{eq:squareda}) holds in $\A^\prime$. By Definition \ref{def:squaredinAprime},
\begin{equation}\label{eq:Aprimerels31}
 x^2 y^2 = x^2 \left(1-\frac{q\nu_z\nu_x - q^{-1}\nu_x\nu_z}{q-q^{-1}}\right) = x^2 - \frac{qx^2 \nu_z\nu_x - q^{-1}x^2 \nu_x \nu_z}{q-q^{-1}}.
\end{equation}
By Lemma \ref{lem:Aprimerels1} and Lemma \ref{lem:Aprimerels2}, the relations  (\ref{eq:rel2a}) and (\ref{eq:x2nuxa}) hold in $\A^\prime$. By  (\ref{eq:rel2a}) and (\ref{eq:x2nuxa}),
\begin{align}
qx^2 \nu_z\nu_x &= q^{-4}\nu_z x^2 - q^{-4}\nu_z + q^{-1}\nu_z\nu_y + q^{-5}\nu_z^2 - q^{-2}\nu_z\nu_y\nu_z, \label{eq:Aprimerels32}\\
q^{-1}x^2 \nu_x \nu_z &= q^{-6}\nu_z x^2 - q^{-2}\nu_z + q\nu_y\nu_z + q^{-3}\nu_z^2 - \nu_y \nu_z^2. \label{eq:Aprimerels33}
\end{align}
By (\ref{eq:Aprimerels32}) and (\ref{eq:Aprimerels33}), it follows that
\begin{equation} \label{eq:Aprimerels34}
\frac{qx^2 \nu_z\nu_x - q^{-1}x^2 \nu_x \nu_z}{q-q^{-1}} = q^{-5}\nu_z x^2 + q^{-3}\nu_z - q^{-4}\nu_z^2 - \frac{q\nu_y \nu_z - q^{-1}\nu_z\nu_y}{q-q^{-1}} + \frac{\nu_y\nu_z^2-q^{-2}\nu_z\nu_y\nu_z}{q-q^{-1}}.
\end{equation}
By  (\ref{eq:defrel1e}),
\begin{equation}\label{eq:Aprimerels35}
\frac{\nu_y\nu_z^2-q^{-2}\nu_z\nu_y\nu_z}{q-q^{-1}} = q^{-5}\nu_z \frac{q\nu_y \nu_z-q^{-1}\nu_z\nu_y}{q-q^{-1}} + q^{-3}(q^2-q^{-2})\nu_z.
\end{equation}
By Definition \ref{def:squaredinAprime},
\begin{equation}\label{eq:Aprimerels36}
 \frac{q\nu_y \nu_z-q^{-1}\nu_z\nu_y}{q-q^{-1}} = 1-x^2.
\end{equation}
Simplifying (\ref{eq:Aprimerels34}) using (\ref{eq:Aprimerels35}) and (\ref{eq:Aprimerels36}), we get
\begin{align}
\frac{qx^2 \nu_z\nu_x - q^{-1}x^2 \nu_x \nu_z}{q-q^{-1}} &= q^{-5}\nu_z x^2 + (q^{-1}+q^{3}-q^{-5})\nu_z + q^{-4}\nu_z^2 + (q^{-5}\nu_z)(1-x^2)\notag\\
&= x^2 - (1-q^{-2}(q+q^{-1})\nu_z + q^{-4}\nu_z^2). \label{eq:Aprimerels37}
\end{align}
Combining (\ref{eq:Aprimerels31}) and (\ref{eq:Aprimerels37}), we get
\begin{equation*}
x^2 y^2 = 1-q^{-2}(q+q^{-1})\nu_z + q^{-4}\nu_z^2.
\end{equation*}
The remaining relations are similarly obtained.
\end{proof}

Consider the following elements of $\A^\prime$:
\begin{equation}\label{eq:wordelts}
\nu_x, \quad \nu_y, \quad \nu_z, \quad x^2, \quad y^2, \quad z^2.
\end{equation}

\begin{definition}\rm
For $n \ge 0$, define an $\A^\prime$-word of length $n$ to be a product $g_1g_2\cdots g_n$ in $\A^\prime$ such that $g_i$ is among (\ref{eq:wordelts}) for $1 \le i \le n$. We interpret the $\A^\prime$-word of length zero to be the multiplicative identity in $\A^\prime$.
\end{definition}

We now define two conditions on an $\A^\prime$-word, called the forbidden and allowed conditions. We begin by defining these conditions on an $\A^\prime$-word of length 2.
\begin{definition}\label{def:forbidden1}\rm
For any $\A^\prime$-word $g_1g_2$ of length 2, consider the entry in the following table with row $g_1$ and column $g_2$. We say that $g_1g_2$ is {\it forbidden} whenever the entry has a \xmark \ and {\it allowed} whenever the entry has a \cmark. Observe that an $\A^\prime$-word of length 2 is allowed whenever it is not forbidden.
\begin{center}
\begin{tabular}{c|ccc|ccc}
& $\nu_x$ & $\nu_y$ & $\nu_z$ & $x^2$ & $y^2$ & $z^2$\\
\hline
$\nu_x$ & \xmark & \xmark & \xmark & \xmark & \xmark & \cmark\\
$\nu_y$ & \xmark & \xmark & \xmark & \xmark & \xmark & \cmark\\
$\nu_z$ & \cmark & \xmark & \xmark & \xmark & \cmark & \cmark\\
\hline
$x^2$ & \cmark & \cmark & \cmark & \cmark & \cmark & \cmark\\
$y^2$ & \cmark & \xmark & \xmark & \xmark & \cmark & \cmark\\
$z^2$ & \xmark & \xmark & \xmark & \xmark & \xmark & \cmark
\end{tabular}
\end{center}
\end{definition}

\begin{definition}\label{def:forbidden2}\rm
For $n \ge 0$, we say that an $\A^\prime$-word $w = g_1 g_2 \cdots g_n$ is {\it forbidden} whenever there exists $1 \le i \le n-1$ such that the $\A^\prime$-word $g_i g_{i+1}$ is forbidden. We say that $w$ is {\it allowed} whenever $w$ is not forbidden.
\end{definition}

Referring to Definition \ref{def:forbidden2}, the notion of {\it allowed} has the following interpretation. Define a map from the set of $\A^\prime$-words to $\A$ that sends $w \mapsto \overline{w}$, where
\begin{align*}
	&\overline{x^2} = x^2, \quad \overline{y^2} = y^2, \quad \overline{z^2} = z^2,\\
	&\overline{\nu_x} = yz, \quad \overline{\nu_y} = xz, \quad \overline{\nu_z} = xy,
\end{align*}
and for an $\A^\prime$-word $w = g_1 g_2 \cdots g_n$,
\[
\overline{w} = \overline{g_1}\  \overline{g_2} \cdots \overline{g_n}.
\]
An $\A^\prime$-word $w$ is allowed whenever $\overline{w} = x^r y^s z^t$ for some $r, s, t \in \N$.  For example, $x^2\nu_y$ is allowed because $\overline{x^2 \nu_y} = x^3 z$, but $\nu_x\nu_y$ is forbidden because $\overline{\nu_x\nu_y} = yzxz$.

\begin{lemma}\label{lem:allowed}
An $\A^\prime$-word is allowed if and only if it appears in {\rm (\ref{eq:allowedwords})}.
\end{lemma}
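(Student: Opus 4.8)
The plan is to translate the combinatorial conditions of Definitions \ref{def:forbidden1} and \ref{def:forbidden2} into a single monotonicity statement about letters, and then read off the possible generator sequences directly. Order the alphabet by $x < y < z$, and to each generator in (\ref{eq:wordelts}) attach the pair (first letter, last letter) of its image under the bar map: the generators $x^2, y^2, z^2$ give the ``self-loops'' $(x,x), (y,y), (z,z)$, while $\nu_z, \nu_y, \nu_x$ give $(x,y), (x,z), (y,z)$ respectively. These are exactly the six pairs $(a,b)$ of letters with $a \le b$.

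The first step is to check, by inspecting the table in Definition \ref{def:forbidden1} row by row, that a length-two word $g_1 g_2$ is allowed precisely when the last letter of $\overline{g_1}$ is less than or equal to the first letter of $\overline{g_2}$. Since each $\overline{g_i}$ is itself a nondecreasing two-letter string, this shows that $w = g_1 \cdots g_n$ is allowed if and only if $\overline{w}$ is nondecreasing, equivalently $\overline{w} = x^r y^s z^t$ for some $r,s,t \in \N$. This recovers the interpretation stated after Definition \ref{def:forbidden2}, and the table is not needed again. For the ``if'' direction of the lemma I would then simply apply the bar map to the two families in (\ref{eq:allowedwords}): the first yields $x^{2r+\delta_1} y^{\delta_1 + 2s + \delta_2} z^{\delta_2 + 2t}$ and the second yields $x^{2r+1} z^{2t+1}$, both nondecreasing, so every listed word is allowed.

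The substantive part, and the main obstacle, is the converse. Let $w = g_1 \cdots g_n$ be allowed and write $(a_i, b_i)$ for its letter data, so that $a_1 \le b_1 \le a_2 \le b_2 \le \cdots \le a_n \le b_n$; in particular the first letters $a_i$ are nondecreasing. I would split on whether $\nu_y$ (the pair $(x,z)$) occurs. If $g_k = \nu_y$, then $b_{k-1} \le a_k = x$ forces every earlier generator to be $x^2$ and $a_{k+1} \ge b_k = z$ forces every later generator to be $z^2$; hence $w = x^{2r}\nu_y z^{2t}$ lies in the second family, and $\nu_y$ occurs at most once. If $\nu_y$ does not occur, then a repeated $\nu_z$ or a repeated $\nu_x$ would violate monotonicity of the $a_i$, so each occurs at most once; moreover whenever $a_i = a_{i+1}$ the chain forces $b_i = a_i$, so in each maximal run of generators sharing a first letter all but the last are self-loops.

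Reading the runs with first letter $x$, then $y$, then $z$, the word is forced into the shape $x^{2r} \nu_z^{\delta_1} y^{2s} \nu_x^{\delta_2} z^{2t}$, which is the first family. The only subtlety is the bookkeeping of degenerate runs in which some exponent vanishes, but these are all absorbed by permitting $r,s,t \in \N$ and $\delta_1, \delta_2 \in \{0,1\}$. I expect this forced-ordering argument to be the crux; once the monotone chain $a_1 \le b_1 \le \cdots \le b_n$ is in hand, the remaining deductions are routine.
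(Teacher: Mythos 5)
Your proposal is correct and takes essentially the same approach as the paper: the paper's proof is just ``immediate from the above comments,'' where those comments are exactly the bar-map interpretation (a word $w$ is allowed if and only if $\overline{w} = x^r y^s z^t$) that you establish rigorously from the table in Definition \ref{def:forbidden1}. Your monotone letter-chain and run analysis simply writes out in full the classification the paper treats as immediate, so there is nothing to criticize beyond noting that your level of detail exceeds the paper's.
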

\begin{proof}
Immediate from the above comments.
\end{proof}

\begin{lemma}\label{lem:spanningset}
The $\K$-vector space $\A^\prime$ is spanned by its allowed words.
\end{lemma}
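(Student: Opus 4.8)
The plan is to argue by a terminating rewriting procedure. Since $\A^\prime$ is generated by $\nu_x,\nu_y,\nu_z$ (Definition \ref{def:Aprime}), it is spanned by the products $\nu_{i_1}\cdots\nu_{i_k}$, each of which is an $\A^\prime$-word; so it suffices to show that every $\A^\prime$-word is a $\K$-linear combination of allowed words. The idea is to locate a forbidden adjacent pair $g_i g_{i+1}$ inside a word, replace it using the relations of Lemmas \ref{lem:Aprimerels1a}--\ref{lem:Aprimerels3}, and repeat until no forbidden pair remains; by Definition \ref{def:forbidden2} a word with no forbidden adjacent pair is allowed.

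First I would record, for each forbidden length-two word $g_1 g_2$ (the \xmark\ entries of the table in Definition \ref{def:forbidden1}), an explicit rewriting $g_1 g_2 = \sum_k c_k\, u_k$ in $\A^\prime$ with each $u_k$ an $\A^\prime$-word of length at most two. For the six ``transposition'' pairs such as $z^2\nu_x$ or $\nu_z x^2$ these come directly from the commutation relations (\ref{eq:rel2a})--(\ref{eq:rel2c}) and produce a single allowed word. For pairs $\nu_i\nu_i$ or two squares in the wrong order (e.g.\ $\nu_x^2$, $z^2x^2$) I would use (\ref{eq:squareda})--(\ref{eq:squaredf}); for mixed pairs such as $\nu_x x^2$ or $\nu_y\nu_z$ the relations (\ref{eq:x2nuxa})--(\ref{eq:x2nuxf}), in a few cases combined with (\ref{eq:x2innua})--(\ref{eq:x2innuc}). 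In every case the surviving length-two terms $u_k$ are ``more sorted'' than $g_1g_2$.

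To make this precise and force termination I would attach to each $\A^\prime$-word $w$ the pair $\mu(w)=(\mathrm{len}(w),\mathrm{dis}(w))$, ordered lexicographically, where $\mathrm{len}(w)$ is the word length and $\mathrm{dis}(w)$ is the number of inversions (with respect to $x<y<z$) in the $x,y,z$-string $\overline w$ produced by the map $w\mapsto\overline w$ defined before Lemma \ref{lem:allowed}; allowed words are exactly those with $\overline w=x^ry^sz^t$, i.e.\ with $\mathrm{dis}(w)=0$. The engine of the proof is the following feature of the case analysis: for each forbidden pair, every length-two $u_k$ has the \emph{same letter-multiset} as $\overline{g_1g_2}$ but strictly fewer inversions, while every $u_k$ of length $<2$ is strictly shorter. (Conceptually this is because, modulo lower-degree terms, $\nu_x,\nu_y,\nu_z,x^2,y^2,z^2$ behave like $yz,xz,xy,x^2,y^2,z^2$ in a $q$-commutative, hence $\Z^3$-content-graded, algebra, so the top parts of the relations are multihomogeneous while the remaining terms drop in length.) Substituting such a rule for $g_i g_{i+1}$ in $w=g_1\cdots g_n$ then replaces $w$ by words of strictly smaller $\mu$: a shorter $u_k$ lowers $\mathrm{len}$, and a length-two $u_k$ keeps $\mathrm{len}$ fixed while lowering $\mathrm{dis}$, since equality of the letter-multiset leaves the inversions with the unchanged surrounding letters untouched and strictly drops the internal inversions. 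As $\N\times\N$ is well-ordered under the lexicographic order, iterating terminates, necessarily at a $\K$-combination of words having no forbidden pair, that is, allowed words.

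The main obstacle is exactly this termination bookkeeping, and within it the length-preserving ``collision'' rewrites (for $\nu_x^2,\nu_y\nu_z,\nu_z\nu_y,z^2x^2$ and their cyclic variants), where the most obvious single relation still leaves a non-allowed length-two term and can even \emph{raise} the inversion count; there one must choose the rewriting, sometimes combining two of the established relations, so that $\mathrm{dis}$ genuinely decreases, and must confirm the letter-multiset is preserved so that $\mathrm{dis}$ stays a monovariant after substitution into a longer word. Checking the decrease of $\mu$ for all the forbidden pairs is routine but constitutes the bulk of the work; the cyclic symmetry means that, once three representative pairs of each type are verified, the rest are ``similarly obtained.''
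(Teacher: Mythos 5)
Your proposal is correct, and it follows the same skeleton as the paper's proof: generate $\A^\prime$ by $\nu_x,\nu_y,\nu_z$, extract from Lemmas \ref{lem:Aprimerels1a}--\ref{lem:Aprimerels3} a reduction rule for each of the 21 forbidden length-two words (each rule having exactly one surviving allowed length-two term plus terms of length $\le 1$), and then iterate rewriting with a well-founded measure. Where you genuinely differ is in the termination monovariant, which is the technical heart of the argument. The paper orders words first by length and then by the number of \emph{forbidden pairs} $(i,j)$, $i<j$, where $g_ig_j$ is forbidden even for non-adjacent positions; it must then split into cases: for the 12 rules whose length-two term is the transposition $g_2g_1$, the count visibly drops by one, while for the remaining 9 rules one verifies by hand that pairs formed with letters outside the replaced block do not increase. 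Your measure --- length, then the inversion number of the image string $\overline{w}$ --- replaces that case analysis with one uniform observation: in every one of the 21 rules the surviving length-two term $\tilde{g}_1\tilde{g}_2$ has the same letter multiset as $\overline{g_1g_2}$ (this checks out against the paper's appendix table, e.g.\ $\nu_x\nu_y \mapsto \nu_z z^2$ both have content $\{x,y,z,z\}$, and your grading heuristic correctly explains why), so inversions against surrounding letters are unchanged, while the internal count drops from $\ge 1$ to $0$ since the new pair is allowed. Both arguments are valid; yours buys a cleaner substitution step, since multiset preservation makes the cross-term bookkeeping automatic rather than a case-by-case verification, at the cost of checking content preservation for all 21 rules instead of only the 9 non-transposition ones --- a check that is immediate once the rules are written down.
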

\begin{proof}
The $\K$-algebra $\A^\prime$ is generated by $\nu_x, \nu_y, \nu_z$, so every element of $\A^\prime$ can be written as a linear combination of $\A^\prime$-words. It suffices to show that every $\A^\prime$-word can be expressed as a linear combination of allowed $\A^\prime$-words.

\medskip
We first show that every $\A^\prime$-word of length 2 can be expressed as a linear combination of allowed $\A^\prime$-words. By Lemmas \ref{lem:Aprimerels1a}--\ref{lem:Aprimerels3}, the relations (\ref{eq:rel2a})--(\ref{eq:squaredf}) and  (\ref{eq:x2nuxa})--(\ref{eq:x2innuc}) hold in $\A^\prime$.
Each forbidden $\A^\prime$-word of length 2 can be expressed as a linear combination of allowed $\A^\prime$-words by using the relations listed in the following table:

\medskip
\begin{center}
\begin{tabular}{c|ccc|ccc}
& $\nu_x$ & $\nu_y$ & $\nu_z$ & $x^2$ & $y^2$ & $z^2$\\
\hline
$\nu_x$ & (\ref{eq:squaredb}) & (\ref{eq:x2nuxf}) & (\ref{eq:x2innub}) & (\ref{eq:x2nuxd}), (\ref{eq:x2nuxa}) & (\ref{eq:rel2b}) & \cmark\\
$\nu_y$ & (\ref{eq:x2innuc}), (\ref{eq:x2nuxf}) & (\ref{eq:squaredf}) & (\ref{eq:x2nuxa}) & (\ref{eq:rel2a}) & (\ref{eq:x2nuxe}) & \cmark\\
$\nu_z$ & \cmark & (\ref{eq:x2innua}), (\ref{eq:x2nuxa}) & (\ref{eq:squareda}) & (\ref{eq:rel2a}) & \cmark & \cmark\\
\hline
$x^2$ & \cmark & \cmark & \cmark & \cmark & \cmark & \cmark\\
$y^2$ & \cmark & (\ref{eq:x2nuxb}) & (\ref{eq:rel2b}) & (\ref{eq:squaredd}), (\ref{eq:squareda}) & \cmark & \cmark\\
$z^2$ & (\ref{eq:rel2c}) & (\ref{eq:rel2c}) & (\ref{eq:x2nuxc}), (\ref{eq:x2nuxf}) & (\ref{eq:squaredc}), (\ref{eq:squaredf}) & (\ref{eq:squarede}), (\ref{eq:squaredb}) & \cmark
\end{tabular}
\end{center}

\medskip
For example, solving for $\nu_x^2$ in (\ref{eq:squaredb}) gives an expression for $\nu_x^2$ as a linear combination of allowed $\A^\prime$-words.  In cases where there are two relations listed, the expression resulting from the first relation contains a forbidden $\A^\prime$-word of length 2 that can be eliminated using the second relation.

\medskip
For each entry in the table above, we obtain an equation with a forbidden $\A^\prime$-word of length 2 on one side and an equivalent linear combination of allowed $\A^\prime$-words on the other side. We call these equations the {\it reduction rules} for $\A^\prime$. Specifically, for a forbidden $\A^\prime$-word $g_1 g_2$, the reduction rule for $g_1 g_2$ is a linear combination involving one allowed $\A^\prime$-word of length 2, which we denote $\tilde{g}_1 \tilde{g}_2$, and $\A^\prime$-words of length 0 and 1.
The 21 reduction rules can be found in Section \ref{sec:appendix}.

\medskip
Now let $w = g_1 g_2 \cdots g_n$ be an $\A^\prime$-word of length $n > 2$. By a {\it forbidden pair} for $w$, we mean an ordered pair of integers $(i,j)$ such that $1 \le i < j \le n$ and the word $g_i g_j$ is forbidden. For $n \ge 0$, let $W_n$ denote the set of $\A^\prime$-words of length $n$ and let $W = \bigcup_{n=0}^{\infty} W_n$ denote the set of all $\A^\prime$-words. We now define a partial order $<$ on $W$. The definition has two aspects. First, for $n > m \ge 0$, every word in $W_m$ is less than every word in $W_n$ with respect to $<$. Second, for $n > 0$ and for $w, w^\prime \in W_n$, $w < w^\prime$ whenever $w$ has fewer forbidden pairs than $w^\prime$.

\medskip
Let $w = g_1 g_2 \cdots g_n$ denote a forbidden $\A^\prime$-word. Then there exists an integer $1 \le i \le n-1$ such that $g_i g_{i+1}$ is forbidden.  Using the reduction rule for $g_i g_{i+1}$, we can replace $g_i g_{i+1}$ in $w$ with a linear combination of allowed $\A^\prime$-words. Let $w^\prime$ be a word appearing in the resulting linear combination. Then $w^\prime$ has length $n, n-1$, or $n-2$. First assume that $w^\prime$ has length $n-1$ or $n-2$. By definition of the partial order, $w^\prime < w$.

\medskip
Next, assume $w^\prime$ has length $n$. Then $w^\prime = g_1 \cdots g_{i-1} \tilde{g}_i \tilde{g}_{i+1} g_{i+2} \cdots g_{n}$. We show that $w^\prime$ has fewer forbidden pairs than $w$. For 12 of the 21 reduction rules, $\tilde{g}_1 \tilde{g}_2 = g_2 g_1$. In these cases, the forbidden pairs of $w^\prime$ are exactly the forbidden pairs of $w$ other than $(i, i+1)$. Thus $w^\prime$ has one fewer forbidden pair than $w$, so $w^\prime < w$.

\medskip
In the remaining 9 cases, let $1 \le j \le n$ with $j \not\in \{i, i+1\}$. It is routinely verified that if neither of $(j, i), (j,i+1)$ (resp. $(i, j), (i+1, j)$) is a forbidden pair in $w$, then neither is a forbidden pair in $w^\prime$. Similarly, it is routinely verified that if exactly one of $(j, i), (j, i+1)$ (resp. $(i, j), (i+1, j)$) is a forbidden pair in $w$, then $(j,i)$ (resp. $(i+1, j)$) is a forbidden pair in $w^\prime$ and $(j, i+1)$ (resp. $(i, j)$) is not a forbidden pair in $w^\prime$. Thus, the number of forbidden pairs of the form $(j, i), (j, i+1), (i, j), (i+1, j)$ in $w^\prime$ is less than or equal to the number of such forbidden pairs in $w$. Observe that the forbidden pairs of $w^\prime$ not involving $i, i+1$ are exactly the forbidden pairs of $w$ not involving $i, i+1$. Also, note that $(i, i+1)$ is a forbidden pair in $w$ and not a forbidden pair in $w^\prime$. Thus $w^\prime$ has strictly fewer forbidden pairs than $w$, so $w^\prime < w$.

\medskip
Therefore, each word in the linear combination obtained by applying the reduction rule is strictly less than $w$ with respect to the partial order $<$. As a result, we can iteratively replace forbidden $\A^\prime$-subwords of length 2 using the reduction rules and the process will terminate after a finite number of steps. The result of this process is an expression for $w$ as a linear combination of allowed $\A^\prime$-words.
\end{proof}

\begin{theorem}
The $\K$-algebra $\A^\prime$ from Definition \ref{def:Aprime} and the $\K$-algebra $\A$ from Definition \ref{def:A} are isomorphic. An isomorphism is given by $\nu_\eta \mapsto \nu_\eta$ for $\eta \in \{x, y, z\}$.
\end{theorem}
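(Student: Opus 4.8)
The plan is to produce a surjective $\K$-algebra homomorphism $\pi \colon \A^\prime \to \A$ and then to show that $\pi$ is injective, whence it is the desired isomorphism. Since the relations (\ref{eq:defrel1a})--(\ref{eq:defrel2f}) hold in $\A$ among $\nu_x, \nu_y, \nu_z$ by Lemma \ref{lem:defrel1} and Lemma \ref{lem:defrel2}, the universal property of the presentation in Definition \ref{def:Aprime} yields a unique unital $\K$-algebra homomorphism $\pi \colon \A^\prime \to \A$ with $\pi(\nu_\eta) = \nu_\eta$ for $\eta \in \{x, y, z\}$. By Lemma \ref{lem:nugenset} the elements $\nu_x, \nu_y, \nu_z$ generate $\A$, so $\pi$ is surjective.

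Next I would track where $\pi$ sends the defined elements $x^2, y^2, z^2$ of $\A^\prime$. By Definition \ref{def:squaredinAprime} the element $x^2 \in \A^\prime$ equals $1 - \frac{q\nu_y\nu_z - q^{-1}\nu_z\nu_y}{q-q^{-1}}$, so $\pi(x^2) = 1 - \frac{q\nu_y\nu_z - q^{-1}\nu_z\nu_y}{q-q^{-1}}$ in $\A$, which by (\ref{eq:x2innua}) is exactly $x^2 \in \A$; similarly $\pi(y^2) = y^2$ and $\pi(z^2) = z^2$. Consequently $\pi$ carries each allowed word of $\A^\prime$ to the corresponding element of $\A$ built from the same formal expression. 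By Lemma \ref{lem:allowed} the allowed words are exactly the elements (\ref{eq:allowedwords}), so $w \mapsto \pi(w)$ is a bijection from the set of allowed words of $\A^\prime$ onto the set of elements (\ref{eq:allowedwords}) regarded inside $\A$, both being indexed by the same data $(r,s,t,\delta_1,\delta_2)$.

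For injectivity I would run a spanning-versus-basis argument. By Lemma \ref{lem:spanningset} the allowed words span $\A^\prime$, while the elements (\ref{eq:allowedwords}) form a basis for $\A$. Suppose $a \in \A^\prime$ satisfies $\pi(a) = 0$. Writing $a$ as a finite $\K$-linear combination of distinct allowed words and applying $\pi$, the previous paragraph converts this into a $\K$-linear combination of distinct basis elements (\ref{eq:allowedwords}) equal to $0$ in $\A$; linear independence forces every coefficient to vanish, so $a = 0$. Hence $\pi$ is injective, and being surjective it is an isomorphism with $\nu_\eta \mapsto \nu_\eta$. As a byproduct the allowed words form a basis for $\A^\prime$.

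The genuine difficulty has already been discharged in Lemma \ref{lem:spanningset}, where the reduction rules together with the termination argument for the partial order $<$ establish that the allowed words span $\A^\prime$; this is what rules out the a priori possibility that $\A^\prime$ is strictly larger than $\A$. Given that spanning statement, the remaining work in this theorem is only the clean transfer of linear independence across $\pi$, for which the compatibility $\pi(x^2) = x^2$, $\pi(y^2) = y^2$, $\pi(z^2) = z^2$ is precisely what is required.
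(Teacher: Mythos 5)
Your proposal is correct and follows essentially the same route as the paper: both construct the homomorphism $\nu_\eta \mapsto \nu_\eta$ from the universal property via Lemmas \ref{lem:defrel1} and \ref{lem:defrel2}, and both reduce the isomorphism claim to Lemma \ref{lem:spanningset} (allowed words span $\A^\prime$) combined with the fact that the homomorphism carries allowed words to the linearly independent elements (\ref{eq:allowedwords}) of $\A$. The paper phrases the conclusion as ``maps a basis of $\A^\prime$ to a basis of $\A$'' while you phrase it as surjectivity plus injectivity, and you make explicit the compatibility $\pi(x^2)=x^2$, $\pi(y^2)=y^2$, $\pi(z^2)=z^2$ that the paper leaves implicit in its citation of Lemma \ref{lem:allowed}; these are presentational differences only.
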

\begin{proof}
By Lemma \ref{lem:defrel1} and Lemma \ref{lem:defrel2}, the elements $\nu_x, \nu_y, \nu_z$ of $\A$ satisfy the defining relations (\ref{eq:defrel1a})--(\ref{eq:defrel2f}) for $\A^\prime$. Therefore there exists a $\K$-algebra homomorphism $\phi : \A^\prime \to \A$ that sends $\nu_\eta \mapsto \nu_\eta$ for $\eta \in \{x, y, z\}$. To show that $\phi$ is an isomorphism, we show that $\phi$ maps a basis for $\A^\prime$ to a basis for $\A$.

\medskip
Let $W$ denote the set of allowed $\A^\prime$-words. By Lemma \ref{lem:allowed}, $\phi$ sends the elements of $W$ to the elements of the basis (\ref{eq:allowedwords}) for $\A$. Since the elements of (\ref{eq:allowedwords}) are linearly independent in $\A$, the elements of $W$ are linearly independent in $\A^\prime$. Together with Lemma \ref{lem:spanningset}, this shows that $W$ is a basis for $\A^\prime$. Consequently, $\phi$ sends a basis for $\A^\prime$ to a basis for $\A$, so $\phi$ is an isomorphism.
\end{proof}

% Finite-dimensional irreducible Uq(sl2)-modules
\section{Finite-dimensional irreducible \texorpdfstring{$U_q(\mathfrak{sl}_2)$}{Uq(sl2)}-modules}

Our next main goal is to classify up to isomorphism the finite-dimensional irreducible $\A$-modules.  We begin by recalling the finite-dimensional irreducible $U_q(\mathfrak{sl}_2)$-modules. For the rest of the paper, we assume that $q$ is not a root of unity.

\begin{lemma}{\rm \cite[Lemma 4.1, 4.2]{Ito2006}}\label{lem:uqsl2modules} %Also cite Jantzen book?
There exists a family of finite-dimensional irreducible $U_q(\mathfrak{sl}_2)$-modules
\begin{equation}\label{eq:Lde}
    L(d, \epsilon), \qquad \epsilon \in \{1, -1\}, \ d \in \N,
\end{equation}
with the following property: $L(d, \epsilon)$ has a basis $\{u_i\}_{i=0}^d$ such that for $0 \le i \le d$,
\begin{align*}
(\epsilon x - q^{d-2i})u_{i} &= (q^{-d}-q^{d-2i+2})u_{i-1},\\
(\epsilon y - q^{d-2i})u_{i} &= (q^d-q^{d-2i-2})u_{i+1}, \\
\epsilon z u_{i} &= q^{2i-d}u_{i},
\end{align*}
where $u_{-1} = 0, u_{d+1} = 0$. For {\rm char} $\K = 2$, we interpret the set $\{1, -1\}$ as $\{1\}$. Every finite-dimensional irreducible $U_q(\mathfrak{sl}_2)$-module is isomorphic to exactly one of the modules {\rm (\ref{eq:Lde})}.
\end{lemma}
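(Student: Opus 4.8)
The plan is to obtain this by combining a direct computation (for the existence and explicit action) with a transport of the standard classification of $U_q(\mathfrak{sl}_2)$-modules along the isomorphism between the equitable and Chevalley presentations (for the completeness assertion). Recall that the Chevalley presentation has generators $E, F, K^{\pm 1}$ with $KEK^{-1} = q^2 E$, $KFK^{-1} = q^{-2}F$, and $EF - FE = (K - K^{-1})/(q - q^{-1})$; by \cite{Ito2006} this is isomorphic to the equitable presentation, and under this isomorphism $z$ acts semisimply on finite-dimensional modules, with eigenvalues matching those of $K^{-1}$. I would record this correspondence first, since the completeness half of the argument rests on it.

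For existence, I would take a $(d+1)$-dimensional vector space with basis $\{u_i\}_{i=0}^d$, define linear operators by the three displayed formulas (with $u_{-1} = u_{d+1} = 0$), and verify directly that they satisfy the defining relations of $U_q(\mathfrak{sl}_2)$. Concretely, it suffices to check the three equitable relations (\ref{eq:equit1})--(\ref{eq:equit3}) together with the invertibility of $y$. Reading off the formulas, $z$ acts diagonally with eigenvalue $\epsilon q^{2i-d}$ on $u_i$, while $x$ and $y$ are triangular with diagonal entry $\epsilon q^{d-2i}$ and a single nonzero off-diagonal entry on $u_{i-1}$, resp.\ $u_{i+1}$; since each diagonal entry of $y$ is nonzero, $y$ is invertible. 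Evaluating, say, $qxy - q^{-1}yx$ on each $u_i$ then reduces (\ref{eq:equit1}) to a collection of Laurent-polynomial identities in $q$, and likewise for (\ref{eq:equit2}) and (\ref{eq:equit3}). This is routine but exponent-heavy bookkeeping, and matching the off-diagonal coefficients $q^{-d} - q^{d-2i+2}$ and $q^d - q^{d-2i-2}$ exactly is the step most prone to error.

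Irreducibility of $L(d,\epsilon)$ follows from the eigenvalue structure. Since $q$ is not a root of unity, the scalars $q^{2i-d}$ for $0 \le i \le d$ are distinct, so $z$ has $d+1$ distinct eigenvalues and the $u_i$ are, up to scalars, its only eigenvectors. Hence any nonzero submodule is spanned by a subset of $\{u_i\}$. But the displayed action of $y$ sends $u_i$ to a combination involving $u_{i+1}$ with coefficient $\epsilon(q^d - q^{d-2i-2}) = \epsilon q^{d-2i-2}(q^{2i+2}-1) \ne 0$, and $x$ sends $u_i$ to a combination involving $u_{i-1}$ with a coefficient that is likewise nonzero, again because $q$ is not a root of unity. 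Starting from any $u_i$ lying in a nonzero submodule, one reaches all the others, so the submodule is everything and $L(d,\epsilon)$ is irreducible.

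Finally, for the classification I would transport the standard result. The algebra isomorphism of \cite{Ito2006} identifies the module categories of the two presentations, and \cite[Theorem 2.6]{Jantzen1996} states that, for $q$ not a root of unity, every finite-dimensional irreducible module in the Chevalley picture is a highest-weight module of dimension $d+1$ and type $\epsilon \in \{1,-1\}$; matching the eigenvalues of $z$ (equivalently $K^{-1}$) identifies it with $L(d,\epsilon)$. The $L(d,\epsilon)$ are pairwise non-isomorphic because $\dim L(d,\epsilon) = d+1$ recovers $d$ and the common sign of the $z$-eigenvalues $\epsilon q^{2i-d}$ recovers $\epsilon$ --- except when $\mathrm{char}\,\K = 2$, where $1 = -1$ forces the two types to coincide, which is precisely why we read $\{1,-1\}$ as $\{1\}$ there. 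Thus every finite-dimensional irreducible module is isomorphic to exactly one $L(d,\epsilon)$. The main obstacle I anticipate is twofold: pinning down the exact isomorphism between presentations so that the type parameter $\epsilon$ is correctly matched with the sign data visible in the equitable action, and carrying the coefficient computations in the existence step through without sign or exponent errors.
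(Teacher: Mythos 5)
This lemma is not proved in the paper at all; it is recalled verbatim from \cite[Lemmas 4.1, 4.2]{Ito2006}, with the classification clause resting on \cite[Theorem 2.6]{Jantzen1996}, so there is no internal proof to compare against. Your proposal --- verifying the equitable relations directly for existence, deducing irreducibility from the distinct eigenvalues of $z$ (valid since $q$ is not a root of unity), and obtaining completeness by transporting Jantzen's classification through the equitable/Chevalley isomorphism while separating $L(d,1)$ from $L(d,-1)$ via their $z$-eigenvalue sets when ${\rm char}\,\K \ne 2$ --- is correct and is essentially the argument of the cited sources.
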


\medskip
We now consider the action of $\A$ on the $U_q(\mathfrak{sl}_2)$-module $L(d, \epsilon)$. Since $q$ is not a root of unity, we have two generating sets of interest for $\A$: the generating set $\nu_x, \nu_y, \nu_z$ from Lemma \ref{lem:nugenset} and the generating set $x^2, y^2, z^2$ from Corollary \ref{cor:x2genset}. We focus on the actions of these $\A$-generators.
\begin{lemma}\label{lem:uqsl2actions1}
For $d \in \N$ and $\epsilon \in \{1, -1\}$, the $\A$-generators $\nu_x, \nu_y, \nu_z$ act on $L(d, \epsilon)$ in the following way. For $0 \le i \le d$,
\begin{enumerate}
\item[{\rm (i)}] $\nu_x u_i = q^{-1}(1 - q^{2(i+1)})u_{i+1}$,
\item[{\rm (ii)}] $\nu_y u_i = q(1-q^{2(i-d-1)})u_{i-1}$,
\item[{\rm (iii)}] $\nu_z u_i$ is a linear combination of $u_{i-1}, u_i, u_{i+1}$ with the following coefficients:
\begin{center}
\begin{tabular}{c|l}
{\rm term} & {\rm coefficient}\\
\hline
$u_{i-1}$ & $q^{2d-4i+3}(1-q^{2(i-d-1)})$\\
$u_{i}$ & $(q^{2d-2i+1}+q^{-2i-1}-q^{2d-4i+1}-q^{2d-4i-1})$\\
$u_{i+1}$ & $q^{2d-4i-3}(1-q^{2(i+1)})$
\end{tabular}
\end{center}	
\end{enumerate}
\end{lemma}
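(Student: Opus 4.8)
The plan is to directly compute the action of each $\A$-generator $\nu_x, \nu_y, \nu_z$ on the basis vector $u_i$ by rewriting each $\nu_\eta$ in terms of the equitable generators $x, y, z$, using Definition \ref{def:nu}, and then applying the known action formulas for $x, y, z$ from Lemma \ref{lem:uqsl2modules}. The key observation is that the three parts of the lemma differ dramatically in difficulty, because the action formulas in Lemma \ref{lem:uqsl2modules} are asymmetric: $\epsilon z$ acts diagonally, while $\epsilon x$ and $\epsilon y$ act in a (lower/upper) bidiagonal fashion. This asymmetry should be exploited to keep the computations short.

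For part (i), I would use $\nu_x = q(1-yz)$. Since $\epsilon z u_i = q^{2i-d}u_i$, applying $yz$ to $u_i$ reduces to applying $\epsilon y$ to a scalar multiple of $u_i$ (up to a factor of $\epsilon^2 = 1$), and the $y$-action formula $(\epsilon y - q^{d-2i})u_i = (q^d - q^{d-2i-2})u_{i+1}$ produces only $u_i$ and $u_{i+1}$ terms. The plan is to verify that the $u_i$ coefficient cancels, leaving a clean multiple of $u_{i+1}$; the stated answer $q^{-1}(1-q^{2(i+1)})u_{i+1}$ strongly suggests this cancellation, so the main work is a short bookkeeping of the surviving $u_{i+1}$ coefficient. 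For part (ii), I would instead use $\nu_y = q^{-1}(1-xz)$, so that again $z$ acts diagonally first and then $\epsilon x$ is applied; since the $x$-action formula $(\epsilon x - q^{d-2i})u_i = (q^{-d}-q^{d-2i+2})u_{i-1}$ produces only $u_i$ and $u_{i-1}$ terms, I expect the $u_i$ term to cancel and leave the stated multiple of $u_{i-1}$. Choosing the $z$-on-the-right form of each $\nu$ (i.e. $\nu_x = q(1-yz)$ and $\nu_y = q^{-1}(1-xz)$) is the trick that makes both (i) and (ii) essentially one line each, since it lets the diagonal action of $z$ hit the basis vector first and convert the product into a single bidiagonal application.

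The genuinely involved case is part (iii). Here no choice of representation for $\nu_z = q(1-xy) = q^{-1}(1-yx)$ avoids a tridiagonal result, because neither of $x,y$ acts diagonally: the product $xy$ (or $yx$) applied to $u_i$ necessarily spreads across $u_{i-1}, u_i, u_{i+1}$. The plan is to apply, say, $\epsilon y$ to $u_i$ using the $y$-formula to land in the span of $u_i, u_{i+1}$, then apply $\epsilon x$ to each of those two vectors using the $x$-formula, collecting the resulting contributions to $u_{i-1}, u_i, u_{i+1}$. This is the step I expect to be the main obstacle: it is a bona fide product of two nondiagonal operators, so there are several $q$-power products to track and combine, and the intermediate coefficients (for instance, the $u_i$ coefficient receives contributions from two separate paths) must be simplified to match the stated form. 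I anticipate that verifying the $u_i$ coefficient $q^{2d-2i+1}+q^{-2i-1}-q^{2d-4i+1}-q^{2d-4i-1}$ will be the most delicate algebraic simplification, and I would double-check it by also computing $\nu_z u_i$ via the alternate expression $q^{-1}(1-yx)$ as an independent consistency check. The factors of $\epsilon$ all appear in even powers throughout, which is why the final formulas are independent of $\epsilon$, consistent with the later claim that $L(d,1)$ and $L(d,-1)$ are isomorphic as $\A$-modules.
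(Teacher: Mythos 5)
Your proposal is correct and matches the paper's approach: the paper's proof is simply ``Follows from Definition \ref{def:nu} and Lemma \ref{lem:uqsl2modules},'' i.e.\ exactly the direct computation you outline, and your coefficient bookkeeping (including the cancellation of the $u_i$ terms in (i), (ii) and the tridiagonal expansion in (iii)) checks out. Your choice of the representations $\nu_x = q(1-yz)$, $\nu_y = q^{-1}(1-xz)$ so that the diagonal $z$-action hits $u_i$ first, and your tracking of even powers of $\epsilon$, are precisely what makes the paper's one-line proof legitimate.
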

\begin{proof}
Follows from Definition \ref{def:nu} and Lemma \ref{lem:uqsl2modules}.
\end{proof}

\begin{lemma}\label{lem:uqsl2actions2}
For $d \in \N$ and $\epsilon \in \{1, -1\}$, the $\A$-generators $x^2, y^2, z^2$ act on $L(d, \epsilon)$ in the following way. For $0 \le i \le d$,
\begin{enumerate}
\item[{\rm (i)}] $x^2u_i$ is a linear combination of $u_{i-2}, u_{i-1}, u_{i}$ with the following coefficients:
\begin{center}
\begin{tabular}{c|l}
{\rm term} & {\rm coefficient}\\
\hline
$u_{i-2}$ & $(q^{-d}-q^{d-2i+2})(q^{-d}-q^{d-2i+4})$\\
$u_{i-1}$ & $q^{d-2i+1}(q+q^{-1})(q^{-d}-d^{d-2i+2})$\\
$u_{i}$ & $q^{2d-4i}$
\end{tabular}
\end{center}

\item[{\rm (ii)}] $y^2u_i$ is a linear combination of $u_{i}, u_{i+1}, u_{i+2}$ with the following coefficients:
\begin{center}
\begin{tabular}{c|l}
{\rm term} & {\rm coefficient}\\
\hline
$u_i$ & $q^{2d-4i}$\\
$u_{i+1}$ & $q^{d-2i-1}(q+q^{-1})(q^d - q^{d-2i-2})$\\
$u_{i+2}$ & $(q^d-q^{d-2i-2})(q^d-q^{d-2i-4})$
\end{tabular}
\end{center}

\item[{\rm (iii)}] $z^2u_i = q^{4i-2d} u_i$.
\end{enumerate}

\end{lemma}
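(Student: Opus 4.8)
The plan is to compute directly from the explicit actions of $x, y, z$ on the basis $\{u_i\}_{i=0}^d$ recorded in Lemma \ref{lem:uqsl2modules}. The crucial observation is that $\epsilon \in \{1, -1\}$ forces $\epsilon^2 = 1$, so that $x^2 = (\epsilon x)^2$, $y^2 = (\epsilon y)^2$, and $z^2 = (\epsilon z)^2$. Thus each desired action is obtained by applying the relevant generator twice, and in doing so the factor of $\epsilon$ cancels. In particular, the resulting formulas are independent of $\epsilon$, exactly as the statement asserts; this independence foreshadows the later identification of $L(d,1)$ and $L(d,-1)$ as $\A$-modules.

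First I would rewrite the relations of Lemma \ref{lem:uqsl2modules} to isolate the single-step actions:
\begin{align*}
\epsilon x u_i &= q^{d-2i}u_i + (q^{-d}-q^{d-2i+2})u_{i-1},\\
\epsilon y u_i &= q^{d-2i}u_i + (q^d-q^{d-2i-2})u_{i+1},\\
\epsilon z u_i &= q^{2i-d}u_i,
\end{align*}
with $u_{-1} = u_{d+1} = 0$. Part (iii) is then immediate: since $\epsilon z$ acts diagonally, $z^2 u_i = (\epsilon z)^2 u_i = q^{2(2i-d)}u_i = q^{4i-2d}u_i$.

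For part (i), I would apply $\epsilon x$ to the expression $\epsilon x u_i$ above, invoking the displayed action on both $u_i$ and $u_{i-1}$, and then collect the coefficients of $u_i, u_{i-1}, u_{i-2}$. The coefficient of $u_i$ is $q^{2(d-2i)} = q^{2d-4i}$ and that of $u_{i-2}$ is $(q^{-d}-q^{d-2i+2})(q^{-d}-q^{d-2i+4})$. The coefficient of $u_{i-1}$ comes out as $(q^{-d}-q^{d-2i+2})(q^{d-2i}+q^{d-2i+2})$, which I would then put into the stated factored form using the identity $q^{d-2i}+q^{d-2i+2} = q^{d-2i+1}(q+q^{-1})$. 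Part (ii) is handled identically, applying $\epsilon y$ twice and collecting the coefficients of $u_i, u_{i+1}, u_{i+2}$, where the off-diagonal term is factored via $q^{d-2i}+q^{d-2i-2} = q^{d-2i-1}(q+q^{-1})$.

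I do not anticipate any genuine obstacle here: the computation is entirely a matter of careful bookkeeping once the single-step actions are written down. The only step that is not purely mechanical is recognizing the elementary identities of the form $q^{a}+q^{a+2} = q^{a+1}(q+q^{-1})$ that convert the off-diagonal coefficients into the factored expressions displayed in the statement.
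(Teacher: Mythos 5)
Your proposal is correct and matches the paper's proof, which simply states that the lemma follows from Lemma \ref{lem:uqsl2modules}: you carry out exactly that computation, applying the single-step actions of $\epsilon x$, $\epsilon y$, $\epsilon z$ twice so that $\epsilon^2 = 1$ cancels, and your coefficients agree with the statement (noting that the paper's $d^{d-2i+2}$ in part (i) is a typo for $q^{d-2i+2}$).
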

\begin{proof}
Follows from Lemma \ref{lem:uqsl2modules}.
\end{proof}

% A-modules from Uq(sl2)-modules
\section{From \texorpdfstring{$U_q(\mathfrak{sl}_2)$}{Uq(sl2)}-modules to \texorpdfstring{$\A$}{A}-modules}

Let $V$ denote a $U_q(\mathfrak{sl}_2)$-module. By restricting from $U_q(\mathfrak{sl}_2)$ to $\A$, the $U_q(\mathfrak{sl}_2)$-module $V$ becomes an $\A$-module. We say that the $U_q(\mathfrak{sl}_2)$-module $V$ {\it extends} the $\A$-module $V$. Recall the $U_q(\mathfrak{sl}_2)$-module $L(d, \epsilon)$ from Lemma \ref{lem:uqsl2modules}. In this section, we discuss the $\A$-module $L(d, \epsilon)$.

\begin{lemma}\label{lem:isoAmods}
For $d \in \N$, there exists an $\A$-module isomorphism $L(d, 1) \to L(d, -1)$ that sends $u_i \mapsto u_i$ for $0 \le i \le d$.
\end{lemma}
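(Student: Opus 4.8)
The plan is to exhibit an explicit $\K$-linear map $\psi \colon L(d,1) \to L(d,-1)$, namely the one sending $u_i \mapsto u_i$ for $0 \le i \le d$, and verify that it commutes with the action of the $\A$-generators $\nu_x, \nu_y, \nu_z$. Since $\nu_x, \nu_y, \nu_z$ generate $\A$ by Lemma \ref{lem:nugenset}, commuting with these three elements suffices to make $\psi$ an $\A$-module homomorphism; and since $\psi$ sends a basis bijectively to a basis, it is automatically a $\K$-vector space isomorphism, hence an $\A$-module isomorphism. The key observation driving the whole argument is that, by Lemma \ref{lem:uqsl2actions1}, the actions of $\nu_x, \nu_y, \nu_z$ on $L(d,\epsilon)$ are given by coefficient formulas that do \emph{not} involve $\epsilon$ at all. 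This is the crucial point: although the generators $x, y, z$ of $U_q(\mathfrak{sl}_2)$ act on $L(d,\epsilon)$ with an explicit dependence on $\epsilon$ (see Lemma \ref{lem:uqsl2modules}, where $\epsilon$ multiplies $x$, $y$, and $z$), the $\A$-generators $\nu_x = q(1-yz)$, $\nu_y = q(1-zx)$, $\nu_z = q(1-xy)$ are built from \emph{products} of two equitable generators, so the two factors of $\epsilon$ cancel.

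First I would fix $d \in \N$ and let $\{u_i\}_{i=0}^d$ denote the standard basis of $L(d,1)$ and let $\{u_i'\}_{i=0}^d$ denote the standard basis of $L(d,-1)$, each as described in Lemma \ref{lem:uqsl2modules}. Define the $\K$-linear map $\psi$ by $\psi(u_i) = u_i'$ for $0 \le i \le d$; this is visibly a vector-space isomorphism. To check that $\psi$ is an $\A$-module map, it remains to verify $\psi(\nu_\eta u_i) = \nu_\eta \psi(u_i)$ for $\eta \in \{x,y,z\}$ and all $0 \le i \le d$. For each generator I would simply compare the two sides using Lemma \ref{lem:uqsl2actions1}: the action of $\nu_\eta$ on $u_i$ in $L(d,1)$ and its action on $u_i'$ in $L(d,-1)$ are given by identical coefficients, because those coefficients are independent of $\epsilon$. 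For instance, $\nu_x u_i = q^{-1}(1-q^{2(i+1)})u_{i+1}$ in both modules, so $\psi(\nu_x u_i) = q^{-1}(1-q^{2(i+1)})u_{i+1}' = \nu_x u_i' = \nu_x \psi(u_i)$, and the same matching holds verbatim for $\nu_y$ and $\nu_z$ using parts (ii) and (iii) of Lemma \ref{lem:uqsl2actions1}.

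There is essentially no obstacle here: the substance of the lemma is the $\epsilon$-independence recorded in Lemma \ref{lem:uqsl2actions1}, which has already been established. The only thing to be careful about is the logical structure of the reduction, namely that it is enough to check the three algebra generators rather than all of $\A$. The work is purely a matter of reading off three pairs of coefficient formulas and observing they coincide, so I would present it compactly by invoking Lemma \ref{lem:uqsl2actions1} directly rather than re-deriving the actions. If I wanted to emphasize conceptual clarity, I could alternatively note that the $\A$-action on the underlying vector space is literally the same in $L(d,1)$ and $L(d,-1)$, making the identity map on the common basis an isomorphism; but the explicit generator-by-generator check is the cleanest way to make the argument self-contained and to pin down precisely which earlier results are being used.
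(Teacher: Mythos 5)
Your proof is correct and follows the same route as the paper's: both rest on the $\epsilon$-independence of the $\nu_x,\nu_y,\nu_z$ actions (Lemma \ref{lem:uqsl2actions1}) together with the fact that these elements generate $\A$ (Lemma \ref{lem:nugenset}). The paper states this in three sentences; you have merely spelled out the generator-by-generator verification that it implicitly contains.
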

\begin{proof}
By Lemma \ref{lem:uqsl2actions1}, the actions of $\nu_x, \nu_y, \nu_z$ on $L(d, \epsilon)$ are independent of $\epsilon$. By Lemma \ref{lem:nugenset}, $\nu_x, \nu_y, \nu_z$ generate $\A$, so the action of $\A$ on $L(d, \epsilon)$ is independent of $\epsilon$. The result follows.
\end{proof}

\begin{definition}\label{def:Ld}\rm
For $d \in \N$, we identify $\A$-modules $L(d,1)$, $L(d,-1)$ via the isomorphism in Lemma \ref{lem:isoAmods}. We call the resulting $\A$-module $L(d)$.
\end{definition}

Observe that $L(d)$ has a basis $\{u_i\}_{i=0}^d$ on which $\nu_x, \nu_y, \nu_z$ act as in Lemma \ref{lem:uqsl2actions1} and $x^2, y^2, z^2$ act as in Lemma \ref{lem:uqsl2actions2}.

\begin{lemma}
For $d \in \N$, the $\A$-module $L(d)$ is irreducible.
\end{lemma}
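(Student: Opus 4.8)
The plan is to show that any nonzero $\A$-submodule $M \subseteq L(d)$ must equal $L(d)$, by exploiting the explicit actions of the generators $\nu_x, \nu_y$ recorded in Lemma~\ref{lem:uqsl2actions1}. The key observation is that, up to nonzero scalars, $\nu_x$ acts as a raising operator and $\nu_y$ acts as a lowering operator on the basis $\{u_i\}_{i=0}^d$. Specifically, from part~(i) we have $\nu_x u_i = q^{-1}(1-q^{2(i+1)})u_{i+1}$, and from part~(ii) we have $\nu_y u_i = q(1-q^{2(i-d-1)})u_{i-1}$. Since $q$ is not a root of unity, the scalar $1-q^{2(i+1)}$ is nonzero for $0 \le i \le d-1$ (as $2(i+1)$ ranges over nonzero even integers bounded by $2d$), and similarly $1-q^{2(i-d-1)}$ is nonzero for $1 \le i \le d$. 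Thus $\nu_x$ carries $u_i$ to a nonzero multiple of $u_{i+1}$ whenever $i < d$, and $\nu_y$ carries $u_i$ to a nonzero multiple of $u_{i-1}$ whenever $i > 0$.

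First I would take a nonzero vector $v \in M$ and write it in the basis as $v = \sum_{i=0}^d c_i u_i$ with not all $c_i$ zero. Let $j$ be the largest index with $c_j \ne 0$. Applying $\nu_x$ repeatedly annihilates every term $u_i$ with $i = d$ and strictly raises the others; more usefully, I would apply the lowering operator $\nu_y$ to isolate a single basis vector. Concretely, applying $\nu_y^{\,j}$ to $v$ kills all terms $u_i$ with $i < j$ (since $\nu_y$ eventually sends them to $u_{-1}=0$) and sends $c_j u_j$ to a nonzero scalar multiple of $u_0$, because each application of $\nu_y$ contributes a nonzero factor as $i$ decreases from $j$ down to $1$. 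Hence $u_0 \in M$. Then applying $\nu_x$ successively, and using that $\nu_x u_i$ is a nonzero multiple of $u_{i+1}$ for each $0 \le i \le d-1$, shows $u_i \in M$ for all $0 \le i \le d$. Therefore $M = L(d)$, and $L(d)$ is irreducible.

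The main point to handle carefully is the nonvanishing of the scalar coefficients, which is exactly where the hypothesis that $q$ is not a root of unity enters: I must check that none of the factors $1-q^{2(i+1)}$ (for $0 \le i \le d-1$) or $1-q^{2(i-d-1)}$ (for $1 \le i \le d$) vanishes, so that the raising and lowering chains never break prematurely. This is straightforward since each exponent is a nonzero even integer of absolute value at most $2d$. I would carry out the isolation argument via the lowering operator first (to reach $u_0$) and the raising operator second (to recover all of $L(d)$); no other generators are needed, since $\nu_x, \nu_y, \nu_z$ generate $\A$ by Lemma~\ref{lem:nugenset} and the submodule $M$ is already closed under the full action. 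The argument involves no substantial obstacle beyond this bookkeeping of nonzero scalars.
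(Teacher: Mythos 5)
Your proof is correct, but it isolates a basis vector inside the submodule by a different mechanism than the paper. Both arguments share the same second half: once a single basis vector is known to lie in the nonzero submodule, the raising and lowering actions of $\nu_x,\nu_y$ from Lemma \ref{lem:uqsl2actions1}, whose coefficients are nonzero because $q$ is not a root of unity, generate all of $\{u_i\}_{i=0}^d$. The difference is in the first half. The paper uses the $z^2$-action: by Lemma \ref{lem:uqsl2actions2}, $z^2$ acts diagonalizably on $L(d)$ with distinct eigenvalues $q^{4i-2d}$, so a nonzero $\A$-submodule $W$ must contain one of the eigenvectors $u_k$ (implicitly, the projection onto each eigenspace is a polynomial in $z^2$ and hence preserves $W$). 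You instead take a nonzero $v \in M$, locate its top term $c_j u_j$, and apply $\nu_y^{\,j}$: this annihilates every lower term (each reaches $u_{-1}=0$ after at most $j$ steps) and sends $c_j u_j$ to a nonzero multiple of $u_0$, so $u_0 \in M$. Your route is more elementary and self-contained---it uses only the $\nu_x,\nu_y$ actions, never invokes $z^2$, and avoids the eigenspace--projection step that the paper leaves unjustified---while the paper's route is the more conceptual weight-space argument. Your attention to the nonvanishing of the scalars $1-q^{2(i+1)}$ and $1-q^{2(i-d-1)}$ is exactly the right point to check, and you handle it correctly.
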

\begin{proof}
Let $W$ denote an nonzero $\A$-submodule of $L(d)$. We show that $W = L(d)$.
For $0 \le i \le d$, let $V_i$ denote the eigenspace of the $z^2$-action on $L(d)$ with eigenvalue $q^{4i-2d}$. By Lemma \ref{lem:uqsl2actions2}, $V = \sum_{i=0}^d V_i$ (direct sum) and $V_i = \text{span}\{u_i\}$ for $0 \le i \le d$. Let $W_i$ denote the projection of $W$ onto $V_i$. Since $W \ne 0$, there exists $k$ such that $W_k \ne 0$. Then $u_k \in W_k$, so $u_k \in W$. Since $W$ is an $\A$-submodule, $\nu_x^n u_k \in W$ and $\nu_y^n u_k \in W$ for all $n \in \N$. By Lemma \ref{lem:uqsl2actions1}, it follows that $u_i \in W$ for all $0 \le i \le d$. Therefore $W = L(d)$.
\end{proof}

\section{Finite-dimensional irreducible \texorpdfstring{$\A$}{A}-modules}

In this section, we classify up to isomorphism the finite-dimensional irreducible $\A$-modules.

\begin{lemma}\label{lem:nilpotent}
Let $V$ be a finite-dimensional $\A$-module. Then the actions of $\nu_x, \nu_y, \nu_z$ on $V$ are nilpotent.
\end{lemma}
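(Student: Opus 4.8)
The plan is to establish nilpotency of each of $\nu_x, \nu_y, \nu_z$ on $V$ by exploiting the commutation relations between the $\nu$'s and the squares $x^2, y^2, z^2$ recorded in Lemma \ref{lem:rel2}, which say that conjugation of a $\nu$ by an appropriate square acts by multiplication by a power of $q$. Since $V$ is finite-dimensional, the actions of $x^2, y^2, z^2$ each have a finite spectrum, and because $q$ is not a root of unity the relations of Lemma \ref{lem:rel2} force the $\nu$'s to shift eigenvalues by a fixed multiplicative factor. The key point is that an operator that maps each eigenspace of a fixed diagonalizable (or at least spectrum-finite) operator into an eigenspace with a strictly scaled eigenvalue must be nilpotent in finite dimensions.

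First I would focus on $\nu_x$ and use the relation $z^2 \nu_x = q^4 \nu_x z^2$ from (\ref{eq:rel2c}). Rearranging, this gives $\nu_x z^2 = q^{-4} z^2 \nu_x$, so for any generalized eigenvector $v$ of $z^2$ with eigenvalue $\lambda$, the vector $\nu_x v$ lies in the generalized eigenspace of $z^2$ for eigenvalue $q^{-4}\lambda$ (one checks this by applying $(z^2 - q^{-4}\lambda)$ repeatedly and commuting $z^2$ past $\nu_x$ using the relation). Let $S$ denote the set of eigenvalues of the $z^2$-action on $V$; since $V$ is finite-dimensional, $S$ is finite. Because $q$ is not a root of unity, the scalars $q^{-4}$ are not roots of unity, so the orbit of any nonzero eigenvalue $\lambda$ under multiplication by $q^{-4}$ is infinite and can meet the finite set $S$ only finitely often. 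Hence applying $\nu_x$ enough times to any vector eventually lands in a $z^2$-eigenspace whose eigenvalue is not in $S$, forcing that image to be zero; iterating over a basis of generalized $z^2$-eigenvectors yields a uniform $N$ with $\nu_x^N = 0$ on $V$.

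The same argument applies verbatim to $\nu_y$ using $z^2 \nu_y = q^{-4}\nu_y z^2$ (the second half of (\ref{eq:rel2c})) and to $\nu_z$ using $x^2 \nu_z = q^{-4}\nu_z x^2$ from (\ref{eq:rel2a}); in each case one commutation relation with a square generator provides the eigenvalue-scaling needed. I would present the $\nu_x$ case in full and remark that the remaining two are obtained similarly by the symmetry of the relations in Lemma \ref{lem:rel2}.

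The main obstacle is the possibility that the $z^2$-action is not diagonalizable on $V$, so that one must work with generalized eigenspaces rather than genuine eigenvectors. Handling this cleanly requires verifying that $\nu_x$ maps the generalized eigenspace of $z^2$ for $\lambda$ into the generalized eigenspace for $q^{-4}\lambda$, which follows from the intertwining relation $\nu_x (z^2 - \mu) = (q^{-4}z^2 - \mu)\nu_x = q^{-4}(z^2 - q^{4}\mu)\nu_x$ but needs to be iterated carefully to cover the full generalized eigenspace. Once this intertwining is in hand, the finiteness of $S$ together with the non-root-of-unity hypothesis closes the argument immediately.
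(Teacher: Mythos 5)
There is a genuine gap: your argument says nothing about the generalized eigenspace of $z^2$ for the eigenvalue $0$, and this case cannot be waved away. (Also, the scaling direction is reversed: from $z^2\nu_x = q^4\nu_x z^2$, the operator $\nu_x$ maps the generalized $\lambda$-eigenspace of $z^2$ into the generalized $q^{4}\lambda$-eigenspace --- your own intertwining identity shows this --- not the $q^{-4}\lambda$-eigenspace; this slip is harmless.) Your orbit argument only applies to \emph{nonzero} eigenvalues: the orbit of $0$ under multiplication by $q^{\pm4}$ is $\{0\}$, so the generalized $0$-eigenspace of $z^2$ is $\nu_x$-invariant and the argument gives no information there. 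Worse, on that subspace nilpotency actually \emph{fails}: if $v \in \ker z^2$, then by (\ref{eq:squaredb}) we get $0 = y^2z^2 v = (1-q^{-1}\nu_x)(1-q^{-3}\nu_x)v$, and $\ker z^2$ is $\nu_x$-invariant (since $z^2\nu_x v = q^4\nu_x z^2 v = 0$), so the restriction of $\nu_x$ to $\ker z^2$ satisfies the polynomial $(1-q^{-1}t)(1-q^{-3}t)$, which has nonzero constant term, and hence is invertible on $\ker z^2$. So if $z^2$ had eigenvalue $0$ on some finite-dimensional module $V$, then $\nu_x$ would not be nilpotent on $V$. Consequently your approach \emph{requires} showing that $z^2$ (and likewise $x^2$, for the $\nu_z$ case) acts injectively on every finite-dimensional $\A$-module, which you never address. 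Within the paper's logical order this cannot be quoted from the classification of irreducible modules, since that comes later and depends on this very lemma.

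The missing argument is, in effect, the paper's proof, which proceeds differently: it works with the spectrum of $\nu_y$ itself. Assuming $\nu_y$ is not nilpotent, it has a nonzero eigenvalue $\lambda$; the relations of Lemma \ref{lem:rel2} show that $z^2$ and $x^2$ shift $\nu_y$-eigenvectors up and down the ladder $q^{4\Z}\lambda$, so by finite-dimensionality there are extremal eigenvectors $v, w$ with $z^2 v = 0$ and $x^2 w = 0$; then the quadratic relations of Lemma \ref{lem:rel3} force $\lambda$ to be simultaneously a negative power and a positive power of $q$, contradicting that $q$ is not a root of unity. The essential ingredient your proposal omits is Lemma \ref{lem:rel3}: it is exactly what controls the boundary of the eigenvalue ladder, i.e.\ the kernels of the squares. (A shared subtlety, which the paper also glosses over: over a non-algebraically closed $\K$ one should pass to $\overline{\K}$ in order to speak of eigenvalues or generalized eigenspaces; nilpotency is unaffected by this extension.)
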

\begin{proof}
We show that the $\nu_y$-action on $V$ is nilpotent. Suppose that the $\nu_y$-action on $V$ is not nilpotent.  Then there exists $0 \ne \lambda \in \K$ such that $\lambda$ is an eigenvalue for $\nu_y$.  Since $V$ is finite-dimensional, there exist $M, N \ge 0$ maximal such that $q^{-4M}\lambda$ and $q^{4N}\lambda$ are eigenvalues for $\nu_y$.

\medskip
Let $v \in V$ be a nonzero eigenvector for $\nu_y$ corresponding to eigenvalue $q^{4N}\lambda$.  By Lemma \ref{lem:rel1},
\[
\nu_y z^2 v = q^4 z^2 \nu_y v = q^{4(N+1)}\lambda z^2 v.
\]
By the maximality of $N$, $q^{4(N+1)}\lambda$ is not an eigenvalue for $\nu_y$, so $z^2 v = 0$.  By Lemma \ref{lem:rel2},
\begin{align*}
0 = x^2 z^2 v &= (1-q^2(q+q^{-1})\nu_y + q^4 \nu_y^2) v\\
            &= (1-q^{4N+2}(q+q^{-1})\lambda + q^{8N+4}\lambda^2)v\\
            &= (1-q^{4N+3}\lambda)(1-q^{4N+1}\lambda) v.
\end{align*}
Therefore $\lambda \in \{q^{-(4N+3)},q^{-(4N+1)}\}$.

\medskip
Now let $w \in V$ be a nonzero eigenvector for $\nu_y$ corresponding to eigenvalue $q^{-4M}\lambda$.  By Lemma \ref{lem:rel1},
\[
\nu_y x^2 w = q^{-4} x^2 \nu_y w = q^{-4(M+1)}\lambda x^2 w.
\]
By the maximality of $M$, $q^{-4(M+1)}\lambda$ is not an eigenvalue for $\nu_y$, so $x^2 w = 0$.  By Lemma \ref{lem:rel2},
\begin{align*}
0 = z^2x^2  w &= (1-q^{-2}(q+q^{-1})\nu_y + q^{-4}\nu_y^2) w\\
            &= (1-q^{-4M-2}(q+q^{-1})\lambda + q^{-8M-4}\lambda^2) w\\
            &= (1 - q^{-4M-3}\lambda)(1-q^{-4M-1}\lambda) w.
\end{align*}
Therefore $\lambda \in \{q^{4M+3}, q^{4M+1}\}$.  However, since $-(4N+3), -(4N+1)$ are negative,  $4M+3, 4M+1$ are positive, and $q$ is not a root of unity, it cannot be the case that $\lambda \in \{q^{-(4N+3)},q^{-(4N+1)}\}$ and $\lambda \in \{q^{4M+3}, q^{4M+1}\}$.  This is a contradiction.  Therefore the action of $\nu_y$ on $V$ is nilpotent.

\medskip
A similar argument shows that the actions of $\nu_x$ and $\nu_z$ on $V$ are nilpotent.
\end{proof}

\begin{lemma}\label{lem:commonevec}
Let $V$ denote a finite-dimensional $\A$-module.  Then the kernel of the $\nu_y$-action on $V$ contains a common eigenvector for $z^2$ and $\nu_y\nu_x$.
\end{lemma}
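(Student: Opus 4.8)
The plan is to work entirely inside $K := \ker(\nu_y|_V)$, which is nonzero because the $\nu_y$-action is nilpotent by Lemma~\ref{lem:nilpotent}. First I would record two facts about the $z^2$-action on $K$. Using the relation $z^2\nu_y = q^{-4}\nu_y z^2$ from (\ref{eq:rel2c}), for $v\in K$ we get $\nu_y(z^2v) = q^4 z^2(\nu_y v)=0$, so $z^2$ preserves $K$. Next, evaluating (\ref{eq:x2innuc}) on a vector $v\in K$ and using $\nu_y v=0$ (so that $\nu_x\nu_y v = 0$) collapses that relation to $z^2 v = v + \tfrac{q^{-1}}{q-q^{-1}}\,\nu_y\nu_x v$. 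Hence on $K$ the operators $z^2$ and $\nu_y\nu_x$ differ by an invertible affine transformation, so they have exactly the same eigenvectors in $K$. It therefore suffices to exhibit a single eigenvector of $z^2$ lying in $K$; since $\K$ need not be algebraically closed, the only real content is to show that $z^2|_K$ has an eigenvalue in $\K$.

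The main step, which I expect to be the crux, is to show that every eigenvalue of $z^2|_K$ is an integer power of $q$ and hence lies in $\K$. I would pass to $\overline{\K}$ and take a nonzero $w\in K$ with $z^2 w=\lambda w$. Form the ladder $v_n=\nu_x^{\,n} w$. The relation $z^2\nu_x=q^4\nu_x z^2$ gives $z^2 v_n=q^{4n}\lambda\,v_n$, and since $\nu_x$ acts nilpotently (Lemma~\ref{lem:nilpotent}) there is a largest $N$ with $v_N\neq0$, so that $v_{N+1}=0$. Rewriting (\ref{eq:x2innuc}) as the $q$-commutator $\nu_y\nu_x=q^2\nu_x\nu_y-q(q-q^{-1})(1-z^2)$ and inducting on $n$ shows $\nu_y v_n=a_n v_{n-1}$, where $a_0=0$ and $a_n=q^2a_{n-1}-q(q-q^{-1})\bigl(1-q^{4(n-1)}\lambda\bigr)$. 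One checks that this recursion is solved in closed form by $a_n=(1-q^{2n})(1-\lambda q^{2n-2})$. Applying $\nu_y$ to $v_{N+1}=0$ forces $a_{N+1}=0$; since $q$ is not a root of unity the factor $1-q^{2(N+1)}$ is nonzero, so $1-\lambda q^{2N}=0$, i.e.\ $\lambda=q^{-2N}\in\K$.

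To finish, every root of the characteristic polynomial of $z^2|_K$ is of the form $q^{-2N}\in\K$, so that polynomial splits over $\K$ and $z^2|_K$ has an eigenvector $w\in K$. By the affine relation from the first paragraph, this $w$ is automatically an eigenvector for $\nu_y\nu_x$ as well, so $w$ is the desired common eigenvector for $z^2$ and $\nu_y\nu_x$ lying in $\ker(\nu_y)$. The main obstacle is the second paragraph: organizing the $\nu_x$-ladder and evaluating the recursion so that the nilpotency of $\nu_x$ combines with the hypothesis that $q$ is not a root of unity to pin the eigenvalues of $z^2|_K$ to powers of $q$; everything else follows directly from the relations in Section~\ref{sec:nus}.
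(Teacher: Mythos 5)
Your proof is correct, and after the shared opening ($K=\ker(\nu_y|_V)\neq 0$ by Lemma \ref{lem:nilpotent}, and $z^2K\subseteq K$ via (\ref{eq:rel2c})) it follows a genuinely different route from the paper's. The paper shows that $\nu_y\nu_x$ also preserves $K$ --- using the cubic relation (\ref{eq:defrel1d}), which collapses to $\nu_y(\nu_y\nu_x w)=0$ when $\nu_y w=0$ --- and that $z^2$ and $\nu_y\nu_x$ commute on $K$, and then concludes; that last step is automatic only when commuting operators are guaranteed eigenvectors, e.g.\ over an algebraically closed field, whereas the paper assumes only that $\K$ is a field and $q$ is not a root of unity. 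You instead restrict (\ref{eq:x2innuc}) to $K$, where the $\nu_x\nu_y$ term vanishes, getting $z^2 = 1 + \tfrac{q^{-1}}{q-q^{-1}}\nu_y\nu_x$ as operators on $K$; this single identity yields the $\nu_y\nu_x$-invariance of $K$, the commutation, and the stronger fact that $z^2$ and $\nu_y\nu_x$ have exactly the same eigenvectors in $K$. Your second paragraph then supplies precisely what the paper's ``the result follows'' elides: the ladder $v_n=\nu_x^n w$ over $\overline{\K}$, with recursion $a_n = q^2a_{n-1}-q(q-q^{-1})(1-q^{4(n-1)}\lambda)$ and closed form $a_n=(1-q^{2n})(1-\lambda q^{2n-2})$ (both check out), pins every $\overline{\K}$-eigenvalue of $z^2|_K$ to the form $q^{-2N}\in\K$, so the characteristic polynomial of $z^2|_K$ splits over $\K$ and an eigenvector exists in $K$ itself. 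The trade-off: your argument is longer but valid over an arbitrary field; the paper's is shorter but airtight only when eigenvalues are known to lie in $\K$. Notably, your eigenvalue-pinning computation is essentially the one the paper performs later in proving Theorem \ref{thm:actions} (compare (\ref{eq:recurrence})--(\ref{eq:recsolution}), where $\lambda=q^{-2d}$ is derived), so you have in effect front-loaded that calculation to secure the existence step.
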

\begin{proof}
Let $W$ denote the kernel of the $\nu_y$-action on $V$. By Lemma \ref{lem:nilpotent},   $\nu_y$ acts nilpotently on $V$, so $W \ne 0$.  It suffices to show that $W$ is fixed by both $z^2$ and $\nu_y\nu_x$ and that $z^2$ and $\nu_y\nu_x$ commute on $W$.

\medskip
Let $w \in W$.  By Lemma \ref{lem:rel2},
\[
	0 = z^2 \nu_y w = q^{-4}\nu_y z^2 w.
\]
Therefore $\nu_y z^2 w = 0$, so $z^2 w \in W$.

\medskip
By Lemma \ref{lem:defrel1},
\begin{equation}\label{eq:tosimplify}
q^{-3}\nu_y^2\nu_x w- (q+q^{-1})\nu_y\nu_x\nu_y w+ q^3\nu_x\nu_y^2 w= (q^2 - q^{-2})(q-q^{-1})\nu_y w.
\end{equation}
Since $\nu_y w = 0$, the equation (\ref{eq:tosimplify}) simplifies to $q^{-3}\nu_y^2\nu_x w= 0$.  Then  $\nu_y (\nu_y \nu_x w ) = 0$, so $\nu_y \nu_x w \in W$.
Therefore $W$ is fixed by both $z^2$ and $\nu_y\nu_x$.

\medskip
By Lemma \ref{lem:rel2},
\[
	z^2 \nu_y \nu_x = q^{-4} \nu_y z^2 \nu_x = \nu_y \nu_x z^2.
\]
Therefore $z^2$ and $\nu_y\nu_x$ commute in $\A$, so they commute on $W$.  The result follows.
\end{proof}

For the rest of this section, the following notation will be in effect.
\begin{notation}\label{not:setup} \rm
Let $V$ denote a finite-dimensional irreducible $\A$-module. By Lemma \ref{lem:commonevec}, there exists  $0 \ne v_0 \in V$ such that $\nu_y v_0 = 0$ and $v_0$ is a common eigenvector for $z^2$ and $\nu_y\nu_x$.  Define $v_i = \nu_x^i v_0$ for $i \in \N$.  Define $v_{-1} = 0$. Since $\nu_x$ is nilpotent, only finitely many $v_i$ are nonzero. Let $d \in \N$ be maximal such that $v_{d} \ne 0$.
\end{notation}

We will show that $\{v_i\}_{i=0}^d$ is a basis for $V$. To do this, we consider the actions of certain elements of $\A$ on $\{v_i\}_{i=0}^d$.

\begin{lemma}\label{lem:eigenvectors}
With reference to Notation \ref{not:setup}, there exists $0 \ne \lambda \in \K$ such that $z^2 v_i = q^{4i}\lambda v_i$ for $0 \le i \le d$.
\end{lemma}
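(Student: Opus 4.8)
Let $\lambda \in \K$ denote the eigenvalue of the $z^2$-action on $v_0$, which exists by Notation \ref{not:setup} since $v_0$ is a $z^2$-eigenvector; thus $z^2 v_0 = \lambda v_0$. The plan is to establish the desired formula for all $i$ by induction, driven entirely by the single commutation relation \eqref{eq:rel2c}, which asserts that $z^2 \nu_x = q^4 \nu_x z^2$ in $\A$. This relation says that applying $\nu_x$ multiplies the $z^2$-eigenvalue by $q^4$, which is exactly the scaling appearing in the statement. The only genuinely nontrivial point will be verifying that $\lambda \ne 0$, and I would handle that separately at the end.

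\textbf{The inductive step.} For the induction, the base case $i=0$ is immediate from the definition of $\lambda$. For the inductive step, suppose $z^2 v_{i-1} = q^{4(i-1)}\lambda v_{i-1}$ for some $1 \le i \le d$. Since $v_i = \nu_x v_{i-1}$ by Notation \ref{not:setup}, I would compute, using \eqref{eq:rel2c},
\[
z^2 v_i = z^2 \nu_x v_{i-1} = q^4 \nu_x z^2 v_{i-1} = q^4 \nu_x \bigl(q^{4(i-1)}\lambda v_{i-1}\bigr) = q^{4i}\lambda\, \nu_x v_{i-1} = q^{4i}\lambda v_i.
\]
This closes the induction and gives $z^2 v_i = q^{4i}\lambda v_i$ for $0 \le i \le d$. (Equivalently, one may observe directly that $z^2 v_i = z^2 \nu_x^i v_0 = q^{4i}\nu_x^i z^2 v_0 = q^{4i}\lambda v_i$, iterating \eqref{eq:rel2c} through the $i$ factors of $\nu_x$.)

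\textbf{Nonvanishing of $\lambda$.} The remaining task, which I expect to be the main obstacle, is to rule out $\lambda = 0$. For this I would use the relation \eqref{eq:squaredf}, namely $x^2 z^2 = 1 - q^2(q+q^{-1})\nu_y + q^4 \nu_y^2$, together with the defining property $\nu_y v_0 = 0$ from Notation \ref{not:setup}. Applying both sides to $v_0$, the two $\nu_y$-terms annihilate $v_0$, leaving $x^2 z^2 v_0 = v_0$. Substituting $z^2 v_0 = \lambda v_0$ then yields $\lambda\, x^2 v_0 = v_0$. Were $\lambda = 0$, this would force $v_0 = 0$, contradicting $v_0 \ne 0$. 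Hence $\lambda \ne 0$, completing the proof.
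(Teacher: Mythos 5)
Your proposal is correct and follows essentially the same route as the paper: define $\lambda$ via $z^2 v_0 = \lambda v_0$, propagate it along $v_i = \nu_x^i v_0$ using the relation $z^2\nu_x = q^4\nu_x z^2$ from Lemma \ref{lem:rel2}, and rule out $\lambda = 0$ by applying \eqref{eq:squaredf} to $v_0$ and using $\nu_y v_0 = 0$. The only cosmetic difference is that you phrase the eigenvalue propagation as an induction (the paper iterates directly) and derive the contradiction from $\lambda\, x^2 v_0 = v_0$ rather than from $x^2 z^2 v_0 = 0$, which is the same computation.
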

\begin{proof}
Since $v_0$ is an eigenvector for $z^2$, there exists $\lambda \in \K$ such that $z^2 v_0 = \lambda v_0$. By Lemma \ref{lem:rel2}, for all $0 \le i \le d$,
\[
	z^2 v_i = z^2 \nu_x^{i} v_0 = q^{4i} \nu_x^{i} z^2 v_0 = q^{4i}\lambda \nu_x^i v_0 = q^{4i}\lambda v_i.
\]
Suppose that $\lambda = 0$.  Then by Lemma \ref{lem:rel3},
\[
	0 = x^2 z^2 v_0 = \left(1 - q^2(q+q^{-1})\nu_y + q^4 \nu_y^2\right)v_0 = v_0.
\]
Since $v_0 \ne 0$, this is a contradiction. Therefore $\lambda \ne 0$.
\end{proof}

\begin{lemma}\label{lem:nuyaction}
With reference to Notation \ref{not:setup}, $\nu_y v_i \in {\rm span}\{v_{i-1}\}$ for $0 \le i \le d$.
\end{lemma}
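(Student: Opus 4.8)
The plan is to prove the statement by induction on $i$, using the defining cubic relation (\ref{eq:defrel1a}) as the main engine and exploiting the recursive structure $v_j = \nu_x^j v_0$, so that $\nu_x v_j = v_{j+1}$ for $0 \le j < d$ and $\nu_x v_d = 0$. For the base cases, when $i = 0$ we have $\nu_y v_0 = 0$ by the choice of $v_0$ in Notation \ref{not:setup}, so $\nu_y v_0 \in {\rm span}\{v_{-1}\} = 0$; and when $i = 1$ we use that $v_0$ is an eigenvector for $\nu_y\nu_x$, whence $\nu_y v_1 = \nu_y\nu_x v_0 \in {\rm span}\{v_0\}$. Here the $\nu_y\nu_x$-eigenvector hypothesis built into Notation \ref{not:setup} is exactly what makes the $i=1$ case work, since (\ref{eq:defrel1a}) evaluated at $v_{-1}=0$ is vacuous.

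For the inductive step, I would fix $i \ge 2$, assume $\nu_y v_j \in {\rm span}\{v_{j-1}\}$ for all $j < i$, and write $\nu_y v_{i-1} = c_{i-1} v_{i-2}$ and $\nu_y v_{i-2} = c_{i-2} v_{i-3}$. The key move is to apply (\ref{eq:defrel1a}) to the vector $v_{i-2}$. The term $\nu_y \nu_x^2 v_{i-2}$ is precisely $\nu_y v_i$; the term $\nu_x \nu_y \nu_x v_{i-2} = \nu_x \nu_y v_{i-1}$ collapses to a scalar multiple of $v_{i-1}$ by the induction hypothesis; the term $\nu_x^2 \nu_y v_{i-2} = c_{i-2}\,\nu_x^2 v_{i-3}$ is likewise a scalar multiple of $v_{i-1}$; and the right-hand side $(q^2-q^{-2})(q-q^{-1})\nu_x v_{i-2}$ is a multiple of $v_{i-1}$. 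Solving the resulting scalar identity for $\nu_y v_i$ exhibits it as a multiple of $v_{i-1}$, completing the induction. One should verify the boundary case $i=2$ separately only to the extent of noting that $\nu_y v_0 = 0$ and $v_{-1}=0$ make the lowest terms vanish consistently.

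I expect the main obstacle to be conceptual rather than computational: identifying the correct relation among (\ref{eq:defrel1a})--(\ref{eq:defrel1f}). A tempting but inadequate route is the $z^2$-eigenvalue argument: by Lemma \ref{lem:eigenvectors} together with $z^2\nu_y = q^{-4}\nu_y z^2$ from Lemma \ref{lem:rel2}, the vector $\nu_y v_i$ lies in the $z^2$-eigenspace of eigenvalue $q^{4(i-1)}\lambda$, the same eigenspace as $v_{i-1}$. Since $q$ is not a root of unity and $\lambda \ne 0$, the $v_i$ have distinct $z^2$-eigenvalues, but that eigenspace is not yet known to be one-dimensional, so this does not pin $\nu_y v_i$ down to a scalar multiple of $v_{i-1}$. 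The genuine content is the observation that (\ref{eq:defrel1a}), evaluated at $v_{i-2}$, isolates $\nu_y v_i$ through its $\nu_y \nu_x^2$ term against quantities that the induction has already placed in ${\rm span}\{v_{i-1}\}$; by contrast, the relation (\ref{eq:defrel1d}) would only control $\nu_y^2 v_i$, which is weaker than what is needed.
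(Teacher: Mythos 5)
Your proposal is correct and follows essentially the same route as the paper: induction on $i$ with the same two base cases, and an inductive step that evaluates the relation (\ref{eq:defrel1a}) on $v_{i-2}$ to isolate $\nu_y v_i = \nu_y\nu_x^2 v_{i-2}$ against terms the induction hypothesis already places in ${\rm span}\{v_{i-1}\}$ (the paper phrases this as right-multiplying (\ref{eq:defrel1a}) by $\nu_x^{m-2}$ and applying to $v_0$, which is the identical computation). Your handling of the $i=2$ boundary and your remark on why the $z^2$-eigenvalue argument alone is insufficient are both accurate.
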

\begin{proof}
We proceed by induction on $i$.  Since $\nu_y v_0 = 0$, $\nu_y v_0 \in {\rm span}\{v_{-1}\}$. Since $v_0$ is an eigenvector of $\nu_y\nu_x$, $\nu_y v_1 = \nu_y \nu_x v_0 \in {\rm span}\{v_0\}$.

\medskip
Now let $2 \le m \le d$ and assume that $\nu_y v_i \in {\rm span}\{v_{i-1}\}$ for $0 \le i < m$.  We show that $\nu_y v_m \in {\rm span}\{v_{m-1}\}$.  Multiplying by $\nu_x^{m-2}$ on the right on both sides of (\ref{eq:defrel1a}), we have
\begin{equation}\label{eq:dimproof1}
\nu_y \nu_x^m = q^3(q^2-q^{-2})(q-q^{-1})\nu_x^{m-1} - q^6 \nu_x^{2}\nu_y\nu_x^{m-2} + q^3(q+q^{-1})\nu_x \nu_y \nu_x^{m-1}.
\end{equation}
Applying both sides of (\ref{eq:dimproof1}) to $v_0$, we have
\begin{equation}\label{eq:nuyvm}
\nu_y v_m = q^3(q^2-q^{-2})(q-q^{-1})v_{m-1} - q^6 \nu_x^{2}\nu_y v_{m-2}+q^3(q+q^{-1})\nu_x \nu_y v_{m-1}.
\end{equation}
By the induction hypothesis, $\nu_y v_{m-2}$ is a scalar multiple of $v_{m-3}$, so $\nu_x^{2}\nu_y v_{m-2}$ is a scalar multiple of $v_{m-1}$.  Similarly, $\nu_y v_{m-1}$ is a scalar multiple of $v_{m-2}$, so $\nu_x \nu_y v_{m-1}$ is a scalar multiple of $v_{m-1}$.  Therefore, the right-hand side of (\ref{eq:nuyvm}) is a scalar multiple of $v_{m-1}$, so $\nu_y v_m \in {\rm span}\{v_{m-1}\}$.
\end{proof}

\begin{lemma}
With reference to Notation \ref{not:setup}, $\{v_i\}_{i=0}^d$ is a basis for $V$.
\end{lemma}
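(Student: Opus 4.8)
The plan is to prove the two defining properties of a basis separately: first that $\{v_i\}_{i=0}^d$ is linearly independent, and then that it spans $V$. For spanning I will show that $U := {\rm span}\{v_i\}_{i=0}^d$ is a nonzero $\A$-submodule of $V$ and invoke the irreducibility of $V$ to conclude $U = V$. Since $\A$ is generated by $\nu_x, \nu_y, \nu_z$ by Lemma \ref{lem:nugenset}, it suffices to check that $U$ is invariant under these three generators.

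For linear independence, I would first observe that every $v_i$ with $0 \le i \le d$ is nonzero: if $v_k = 0$ for some $k \le d$, then $v_d = \nu_x^{d-k} v_k = 0$, contradicting the maximality of $d$ in Notation \ref{not:setup}. By Lemma \ref{lem:eigenvectors}, each $v_i$ is a $z^2$-eigenvector with eigenvalue $q^{4i}\lambda$, where $\lambda \ne 0$. Because $q$ is not a root of unity, the scalars $q^{4i}\lambda$ for $0 \le i \le d$ are pairwise distinct: if $q^{4i}\lambda = q^{4j}\lambda$ then $q^{4(i-j)} = 1$, forcing $i = j$. Nonzero eigenvectors with distinct eigenvalues are linearly independent, so $\{v_i\}_{i=0}^d$ is linearly independent.

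For invariance of $U$, the actions of $\nu_x$ and $\nu_y$ are immediate: $\nu_x v_i = v_{i+1} \in U$ (with $v_{d+1} = 0$ from the maximality of $d$), and $\nu_y v_i \in {\rm span}\{v_{i-1}\} \subseteq U$ by Lemma \ref{lem:nuyaction}. The only real work is showing $\nu_z U \subseteq U$, and this is the step I expect to be the main obstacle, since none of the established relations expresses $\nu_z v_i$ directly in terms of the $v_j$. My plan is to sidestep this by applying the relation (\ref{eq:x2nuxf}), namely $\nu_z z^2 = q^{-1}z^2 - q^{-1} + q^{-2}\nu_x + q^2\nu_y - q\nu_x\nu_y$, to the vector $v_i$. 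On the left-hand side, $z^2$ acts first on the eigenvector $v_i$, giving $\nu_z z^2 v_i = q^{4i}\lambda\,\nu_z v_i$. On the right-hand side, every term lands in ${\rm span}\{v_{i-1}, v_i, v_{i+1}\} \subseteq U$: the terms $q^{-1}z^2 v_i$ and $-q^{-1}v_i$ are multiples of $v_i$, the term $q^{-2}\nu_x v_i$ is a multiple of $v_{i+1}$, the term $q^2\nu_y v_i$ is a multiple of $v_{i-1}$ by Lemma \ref{lem:nuyaction}, and $-q\nu_x\nu_y v_i$ is a multiple of $v_i$ since $\nu_y v_i \in {\rm span}\{v_{i-1}\}$ and $\nu_x v_{i-1} = v_i$. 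Hence $q^{4i}\lambda\,\nu_z v_i \in U$, and dividing by the nonzero scalar $q^{4i}\lambda$ yields $\nu_z v_i \in U$.

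Combining these observations, $U$ is invariant under $\nu_x, \nu_y, \nu_z$ and is therefore an $\A$-submodule; since $v_0 \ne 0$ it is nonzero, and the irreducibility of $V$ forces $U = V$. Together with the linear independence established above, this shows $\{v_i\}_{i=0}^d$ is a basis for $V$. The crucial idea is that multiplying the $\nu_z$ relation through by $z^2$ converts the obstruction into multiplication by a nonzero eigenvalue, which can be inverted scalar-wise on each $v_i$ without requiring $z^2$ to be globally invertible on $V$.
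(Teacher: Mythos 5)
Your proposal is correct and follows essentially the same route as the paper: linear independence via the distinct $z^2$-eigenvalues from Lemma \ref{lem:eigenvectors}, and spanning by showing ${\rm span}\{v_i\}_{i=0}^d$ is invariant under the generators $\nu_x, \nu_y, \nu_z$, with the $\nu_z$-invariance obtained exactly as in the paper by applying (\ref{eq:x2nuxf}) to the eigenvector $v_i$ and dividing by the nonzero eigenvalue $q^{4i}\lambda$. Your explicit verification that each $v_i \ne 0$ and your appeal to irreducibility merely spell out steps the paper leaves implicit.
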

\begin{proof}
By Lemma \ref{lem:eigenvectors}, the elements of $\{v_i\}_{i=0}^d$ are eigenvectors for $z^2$ corresponding to distinct eigenvalues, so they are linearly independent. Thus, it remains to show that $\{v_i\}_{i=0}^d$ span $V$.

\medskip
Let $V^\prime = {\rm span}\{v_i\}_{i=0}^d$. We show that $V^\prime = V$. By Lemma \ref{lem:nugenset}, $\nu_x, \nu_y, \nu_z$ generate $\A$. Thus, it suffices to show that $V^\prime$ is fixed by $\nu_x, \nu_y, \nu_z$.

\medskip
By Notation \ref{not:setup}, $V^\prime$ is fixed by $\nu_x$. By Lemma \ref{lem:nuyaction}, $V^\prime$ is fixed by $\nu_y$.
To show that $V^\prime$ is fixed by $\nu_z$, we consider the $z^2$-action on $V^\prime$.  By Lemma \ref{lem:eigenvectors}, there exists $\lambda \ne 0$ such that $z^2 v_i = q^{4i}\lambda v_i$ for $0 \le i \le d$. Then by Lemma \ref{lem:rel4},
\begin{equation}\label{eq:y2vi}
	\nu_z v_i = q^{-4i}\lambda^{-1} \nu_z z^2 v_i = q^{-4i}\lambda^{-1}\left(q^{-1}z^2-q^{-1}+q^{-2}\nu_x+q^2\nu_y-q\nu_x\nu_y\right)v_i.
\end{equation}
The right-hand side of (\ref{eq:y2vi}) is contained in $V^\prime$ since $V^\prime$ is fixed by $\nu_x, \nu_y$, and $z^2$.  Therefore $V^\prime$ is fixed by $\nu_z$. The result follows.
\end{proof}

We now compute the actions of the elements $\nu_x, \nu_y, \nu_z, x^2, y^2, z^2$ of $\A$ on $\{v_i\}_{i=0}^d$. It is convenient for us to start with $\nu_x, \nu_y, z^2$.

\begin{theorem}\label{thm:actions}
With reference to Notation \ref{not:setup}, the elements $\nu_x, \nu_y, z^2$ of $\A$ act on the basis $\{v_i\}_{i=0}^d$ for $V$ in the following way. For $0 \le i \le d$,
\begin{align}
\nu_x v_i &= v_{i+1} \label{eq:actionsnux},\\
\nu_y v_i &= (q^{2i}-1)(q^{2(i-d-1)}-1)v_{i-1} \label{eq:actionsnuy},\\
z^2 v_i &= q^{4i-2d}v_i \label{eq:actionsz2},
\end{align}
where $v_{-1} = v_{d+1} = 0$.
\end{theorem}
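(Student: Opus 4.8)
The plan is to handle the three formulas in turn, deducing each from relations that act on $V$ because $\A$ is a subalgebra of $U_q(\mathfrak{sl}_2)$. Equation (\ref{eq:actionsnux}) is essentially built into Notation \ref{not:setup}: since $v_i = \nu_x^i v_0$ we have $\nu_x v_i = v_{i+1}$ for $0 \le i \le d-1$, while for $i = d$ the maximality of $d$ gives $v_{d+1} = \nu_x^{d+1} v_0 = 0$, so that $\nu_x v_d = 0 = v_{d+1}$. By Lemma \ref{lem:nuyaction} we may write $\nu_y v_i = c_i v_{i-1}$ for scalars $c_i$, with $c_0 = 0$ because $\nu_y v_0 = 0$; the heart of the proof is to show $c_i = (q^{2i}-1)(q^{2(i-d-1)}-1)$, which is exactly (\ref{eq:actionsnuy}).

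To pin down the $c_i$, I would apply relation (\ref{eq:defrel1a}) to the basis vector $v_j$. Using (\ref{eq:actionsnux}) together with $\nu_y v_i = c_i v_{i-1}$ repeatedly, each of the three terms on the left-hand side becomes a scalar multiple of $v_{j+1}$ (namely $\nu_x^2\nu_y v_j = c_j v_{j+1}$, $\nu_x\nu_y\nu_x v_j = c_{j+1} v_{j+1}$, and $\nu_y\nu_x^2 v_j = c_{j+2} v_{j+1}$), and comparing coefficients of $v_{j+1}$ yields the three-term recurrence $q^3 c_j - (q+q^{-1})c_{j+1} + q^{-3}c_{j+2} = (q^2-q^{-2})(q-q^{-1})$ for $0 \le j \le d-1$. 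At the top end $j = d-1$ the term $\nu_y\nu_x^2 v_{d-1} = \nu_y v_{d+1}$ vanishes, which is exactly the boundary condition $c_{d+1} = 0$. The characteristic polynomial of the homogeneous recurrence factors as $(r-q^2)(r-q^4)$, so the general solution is $c_j = 1 + A q^{4j} + B q^{2j}$, the constant $1$ being a particular solution. Imposing $c_0 = 0$ and $c_{d+1} = 0$ gives two linear equations for $A, B$ whose unique solution is $A = q^{-2(d+1)}$ and $B = -1 - q^{-2(d+1)}$, and substituting these reproduces the claimed closed form. I expect this to be the main obstacle: the scalar $c_1$ (the $\nu_y\nu_x$-eigenvalue on $v_0$) remains genuinely free after imposing $c_0 = 0$, so it can only be fixed by the condition at the top, and the resulting $2 \times 2$ system is nonsingular precisely because $q^{2(d+1)} \ne 1$, which is where the hypothesis that $q$ is not a root of unity enters.

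For (\ref{eq:actionsz2}) I would avoid reintroducing the unknown eigenvalue $\lambda$ of Lemma \ref{lem:eigenvectors} and instead read the $z^2$-action off the $\nu$'s directly. Relation (\ref{eq:x2innuc}) gives $z^2 = 1 - \frac{q\nu_x\nu_y - q^{-1}\nu_y\nu_x}{q-q^{-1}}$. Since $\nu_x\nu_y v_i = c_i v_i$ and $\nu_y\nu_x v_i = c_{i+1} v_i$, this yields $z^2 v_i = \bigl(1 - \frac{q c_i - q^{-1}c_{i+1}}{q-q^{-1}}\bigr) v_i$, and substituting the closed form for $c_i$ and $c_{i+1}$ collapses the bracket to $q^{4i-2d}$ once the middle powers of $q$ cancel. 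The value $c_{d+1} = 0$ needed at $i = d$ is consistent, since $\nu_y\nu_x v_d = \nu_y v_{d+1} = 0$. This final step is a routine simplification once the $c_i$ are in hand, the only care needed being the bookkeeping of exponents of $q$.
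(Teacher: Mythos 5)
Your proof is correct, but it takes a genuinely different route from the paper's. The paper keeps the unknown $z^2$-eigenvalue $\lambda$ from Lemma \ref{lem:eigenvectors} in play: applying (\ref{eq:x2innuc}) to $v_i$ couples the coefficients $\alpha_i$ (your $c_i$) to $\lambda$ through a \emph{first-order} recurrence $\alpha_{i+1} = q^2\alpha_i + (q^2-1)(q^{4i}\lambda - 1)$, which is solved from $\alpha_0 = 0$ alone; the top condition $\alpha_{d+1} = 0$ is then used to force $\lambda = q^{-2d}$, and (\ref{eq:actionsnuy}) and (\ref{eq:actionsz2}) drop out together. You instead decouple the two problems: the cubic relation (\ref{eq:defrel1a}) applied to $v_j$ gives a constant-coefficient \emph{three-term} recurrence in the $c_j$ alone (the same device the paper uses only qualitatively in the proof of Lemma \ref{lem:nuyaction}), which you solve via the characteristic roots $q^2, q^4$ together with the two boundary conditions $c_0 = c_{d+1} = 0$; the $z^2$-action is then computed afterwards from (\ref{eq:x2innuc}). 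What your route buys: Lemma \ref{lem:eigenvectors} is not needed inside the theorem's proof at all, since the diagonal action of $z^2$ on the $v_i$ is derived rather than assumed (that lemma is still needed earlier, to prove the $v_i$ are linearly independent). What it costs: a second-order rather than first-order recurrence, so you must also verify that the $2\times 2$ boundary-value system is nonsingular, which you correctly trace to $q^{2(d+1)} \ne 1$; the paper needs the same non-root-of-unity hypothesis but only to divide once by $1-q^{2(d+1)}$. Two small points to tidy up: say explicitly that comparing coefficients of $v_{j+1}$ is legitimate because $v_{j+1} \ne 0$ for $0 \le j \le d-1$, and note that when $d = 0$ there are no recurrence equations, so that case is handled directly by $c_0 = c_1 = 0$ (your $z^2$ computation still goes through).
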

\begin{proof}
Observe that (\ref{eq:actionsnux}) holds by Notation \ref{not:setup}.  By Lemma \ref{lem:eigenvectors}, there exists $0 \ne \lambda \in \K$ such that $z^2 v_i = q^{4i}\lambda v_i$ for $0 \le i \le d$.  By Lemma \ref{lem:nuyaction}, there exist $\alpha_i \in \K$ such that $\nu_y v_i = \alpha_i v_{i-1}$ for $0 \le i \le d$. Since $v_{-1} = v_{d+1} = 0$, we set $\alpha_0 = \alpha_{d+1} = 0$. To verify (\ref{eq:actionsnuy}) and $(\ref{eq:actionsz2})$, it suffices to show that $\lambda = q^{-2d}$ and $\alpha_i = (q^{2i}-1)(q^{2(i-d-1)}-1)$ for $0 \le i \le d$.

\medskip
By Lemma \ref{lem:x2innu},
\begin{equation*}
	q^{4i} \lambda v_i = z^2 v_i = \left(1 - \frac{q \nu_x \nu_y - q^{-1}\nu_y\nu_x}{q-q^{-1}}\right)v_i = \left(1 - \frac{q\alpha_i - q^{-1} \alpha_{i+1}}{q-q^{-1}}\right)v_i .
\end{equation*}

This yields the following recurrence:
\begin{equation}\label{eq:recurrence}
\alpha_{i+1} = q^2 \alpha_i + (q^2 - 1)(q^{4i}\lambda -1) \qquad (0 \le i \le d).
\end{equation}

\medskip
It is easily verified that the solution to the recurrence (\ref{eq:recurrence}) with initial condition $\alpha_0 = 0$ is
\begin{equation}\label{eq:recsolution}
	\alpha_i = 1 - q^{2i} + \lambda (q^{4i-2} - q^{2i-2}) \qquad (0 \le i \le d).
\end{equation}

Setting $i = d+1$ in (\ref{eq:recsolution}) and factoring, we get
\begin{align*}
	\alpha_{d+1} &= (1 - q^{2(d+1)})(1 - \lambda q^{2d}).
\end{align*}
Since $\alpha_{d+1} = 0$, this gives $\lambda = q^{-2d}$. Plugging $\lambda = q^{-2d}$  into (\ref{eq:recsolution}), we get
\begin{align*}
	\alpha_i &= 1 - q^{2i} + q^{-2d}(q^{4i-2} - q^{2i-2})\\
		&= (q^{2i}-1)(q^{2(i-d-1)}-1).
\end{align*}

The result follows.
\end{proof}

\begin{lemma}\label{lem:otheractions}
With reference to Notation \ref{not:setup}, the elements $x^2, y^2, \nu_z$ of $\A$ act on the basis $\{v_i\}_{i=0}^d$ in the following way. For $0 \le i \le d$,
\begin{enumerate}
\item[{\rm (i)}] $x^2v_i$ is a linear combination of $v_{i-2}, v_{i-1}, v_{i}$ with the following coefficients:
\begin{center}
\begin{tabular}{c|l}
{\rm term} & {\rm coefficient}\\
\hline
$v_{i-2}$ & $q^4(q^{2i}-1)(q^{2(i-d-1)}-1)(q^{2(i-1)}-1)(q^{2(i-d-2)}-1)$\\
$v_{i-1}$ & $-q^2(q+q^{-1})(q^{2i}-1)(q^{2(i-d-1)}-1)$\\
$v_{i}$ & $q^{2d-4i}$
\end{tabular}
\end{center}

\item[{\rm (ii)}] $y^2v_i$ is a linear combination of $v_{i}, v_{i+1}, v_{i+2}$ with the following coefficients:
\begin{center}
\begin{tabular}{c|l}
{\rm term} & {\rm coefficient}\\
\hline
$v_i$ & $q^{2d-4i}$\\
$v_{i+1}$ & $- q^{2d-4i-2}(q+q^{-1})$\\
$v_{i+2}$ & $q^{2d-4i-4}$
\end{tabular}
\end{center}

\item[{\rm (iii)}] $\nu_z v_i$ is a linear combination of $v_{i-1}, v_{i}, v_{i+1}$ with the following coefficients:
\begin{center}
\begin{tabular}{c|l}
{\rm term} & {\rm coefficient}\\
\hline
$v_{i-1}$ & $(q^{-2i}-1)(q^{-2(i-d-1)}-1)$\\
$v_{i}$ & $q^{2d-2i+1}+q^{-2i-1}-q^{2d-4i+1}-q^{2d-4i-1}$\\
$v_{i+1}$ & $q^{2d-4i-2}$
\end{tabular}
\end{center}
\end{enumerate}
\end{lemma}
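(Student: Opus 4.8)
The three parts share a single mechanism. By Theorem \ref{thm:actions}, $z^2$ acts on each basis vector $v_i$ as the scalar $q^{4i-2d}$, which is nonzero since $q$ is not a root of unity, hence $z^2$ acts invertibly. At the same time, each of the operators $x^2$, $y^2$, $\nu_z$ satisfies a relation (established earlier in $U_q(\mathfrak{sl}_2)$, hence valid in $\A$) expressing its product with $z^2$ as a polynomial in $\nu_x$, $\nu_y$, and $z^2$: namely (\ref{eq:squaredf}) gives $x^2 z^2 = 1 - q^2(q+q^{-1})\nu_y + q^4 \nu_y^2$, (\ref{eq:squaredb}) gives $y^2 z^2 = 1 - q^{-2}(q+q^{-1})\nu_x + q^{-4}\nu_x^2$, and (\ref{eq:x2nuxf}) gives $\nu_z z^2 = q^{-1}z^2 - q^{-1} + q^{-2}\nu_x + q^2 \nu_y - q\nu_x\nu_y$. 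The plan is to apply each of these relations to $v_i$: since $z^2 v_i = q^{4i-2d}v_i$, the left-hand side becomes $q^{4i-2d}$ times the action I want, so I can solve for that action by multiplying through by $q^{2d-4i}$.

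To evaluate the right-hand sides I need only the actions already recorded in Theorem \ref{thm:actions}, iterated once. Writing $\alpha_i = (q^{2i}-1)(q^{2(i-d-1)}-1)$, these give $\nu_x v_i = v_{i+1}$ and $\nu_x^2 v_i = v_{i+2}$; $\nu_y v_i = \alpha_i v_{i-1}$ and $\nu_y^2 v_i = \alpha_i \alpha_{i-1} v_{i-2}$; and, for part (iii), $\nu_x \nu_y v_i = \alpha_i v_i$. For part (ii) this immediately yields $y^2 v_i = q^{2d-4i}\bigl(v_i - q^{-2}(q+q^{-1})v_{i+1} + q^{-4}v_{i+2}\bigr)$, which is the asserted linear combination of $v_i, v_{i+1}, v_{i+2}$; part (i) is the mirror-image computation using (\ref{eq:squaredf}) and the $\nu_y$-action, producing a combination of $v_{i-2}, v_{i-1}, v_i$ whose coefficients are quadratic in $\alpha_i$. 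Part (iii) produces $\nu_z v_i = q^{2d-4i}\bigl(q^{-1}q^{4i-2d}v_i - q^{-1}v_i + q^{-2}v_{i+1} + q^2\alpha_i v_{i-1} - q\alpha_i v_i\bigr)$, a combination of $v_{i-1}, v_i, v_{i+1}$.

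The only step that is more than bookkeeping is recognizing the simplified coefficients as the factored expressions in the tables. For instance, the $v_{i-1}$ coefficient of $\nu_z v_i$ comes out as $q^{2d-4i+2}\alpha_i$, and matching this to the stated $(q^{-2i}-1)(q^{-2(i-d-1)}-1)$ requires verifying the identity $q^{2d-4i+2}(q^{2i}-1)(q^{2(i-d-1)}-1) = (q^{-2i}-1)(q^{-2(i-d-1)}-1)$ by expanding both sides as Laurent polynomials in $q$; likewise the diagonal coefficient must be collected into the four-term form displayed in the table. I expect this factoring and power-of-$q$ tracking to be the main (though still routine) obstacle, precisely because the $x^2$ and $\nu_z$ coefficients are genuinely quadratic in $\alpha_i$ and the overall factor $q^{2d-4i}$ must be distributed carefully. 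Once these elementary identities are confirmed, the three formulas follow, completing the proof.
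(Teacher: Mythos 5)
Your method is exactly the paper's own proof: use $z^2 v_i = q^{4i-2d}v_i$ from Theorem \ref{thm:actions} to get $\phi v_i = q^{2d-4i}\phi z^2 v_i$ for $\phi \in \{x^2, y^2, \nu_z\}$, then evaluate $x^2z^2$, $y^2z^2$, $\nu_z z^2$ via (\ref{eq:squaredf}), (\ref{eq:squaredb}), (\ref{eq:x2nuxf}) and simplify using Theorem \ref{thm:actions}. Your treatment of parts (ii) and (iii) is correct, including the identity $q^{2d-4i+2}(q^{2i}-1)(q^{2(i-d-1)}-1) = (q^{-2i}-1)(q^{-2(i-d-1)}-1)$ needed to put the $v_{i-1}$-coefficient of $\nu_z v_i$ into the displayed form; that identity does hold.

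However, your claim that part (i) then follows by the same ``routine'' coefficient-matching fails, and the failure point is concrete. Your own computation gives $x^2 v_i = q^{2d-4i}\left(1 - q^2(q+q^{-1})\nu_y + q^4\nu_y^2\right)v_i$, so with $\alpha_i = (q^{2i}-1)(q^{2(i-d-1)}-1)$ the coefficients are $q^{2d-4i}$ for $v_i$, $-q^{2d-4i+2}(q+q^{-1})\alpha_i$ for $v_{i-1}$, and $q^{2d-4i+4}\alpha_i\alpha_{i-1}$ for $v_{i-2}$. The table displayed in part (i) omits the factor $q^{2d-4i}$ from the $v_{i-1}$ and $v_{i-2}$ entries, and---unlike part (iii), where that factor is exactly absorbed by inverting the exponents---no identity equates $q^{2d-4i+2}(q+q^{-1})\alpha_i$ with $q^{2}(q+q^{-1})\alpha_i$ unless $d = 2i$. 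A concrete check: for $d = i = 1$, both your computation and the $L(1)$-action of Lemma \ref{lem:uqsl2actions2} transported through the isomorphism $v_i \mapsto \gamma_i u_i$ of the paper's final theorem give $v_0$-coefficient $(q+q^{-1})(q-q^{-1})^2$, whereas the table gives $q^2(q+q^{-1})(q-q^{-1})^2$. So the coefficients your method produces are the correct ones (and are also what the paper's own proof actually yields); the displayed part (i) table is in error. As a proof of the statement as displayed, your part (i) cannot be completed: you must either record the corrected coefficients or flag the discrepancy, rather than assert that the factoring works out.
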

\begin{proof}
By Theorem \ref{thm:actions}, we have $z^2 v_i = q^{4i-2d}v_i$ for $0 \le i \le d$. Thus, $\phi v_i = q^{2d-4i} \phi z^2 v_i$ for each $\phi \in \{x^2, y^2, \nu_z\}$.
Then by Lemma \ref{lem:rel3} and Lemma \ref{lem:rel4},
\begin{align*}
x^2 v_i &= q^{2d-4i}x^2z^2v_i = q^{2d-4i}\left(1-q^2(q+q^{-1})\nu_y+q^4\nu_y^2\right)v_i,\\
y^2 v_i &= q^{2d-4i} y^2 z^2 v_i = q^{2d-4i}\left(1 - q^{-2}(q+q^{-1})\nu_x + q^{-4}\nu_x^2\right)v_i,\\
\nu_zv_i &= q^{2d-4i} \nu_z z^2 v_i = q^{2d-4i}\left(q^{-1}z^2 - q^{-1}+q^{-2}\nu_x + q^2 \nu_y - q \nu_x \nu_y\right)v_i.
\end{align*}

The result follows by simplifying these equations using Theorem \ref{thm:actions}.
\end{proof}

\begin{theorem}
The $\A$-module $V$ from Notation \ref{not:setup} is isomorphic to the $\A$-module $L(d)$ from Definition \ref{def:Ld}. An isomorphism is given by $v_i \mapsto \gamma_i u_i$, where $\gamma_0 = 1$ and $\gamma_{i+1}/\gamma_i = q^{-1}(1 - q^{2(i+1)})$ for $0 \le i \le d-1$.
\end{theorem}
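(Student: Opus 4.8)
The plan is to exhibit an explicit linear isomorphism $\psi \colon V \to L(d)$ and verify that it intertwines the action of a generating set for $\A$. Since both $V$ and $L(d)$ have dimension $d+1$, with bases $\{v_i\}_{i=0}^d$ and $\{u_i\}_{i=0}^d$ respectively, I define $\psi(v_i) = \gamma_i u_i$ using the prescribed scalars $\gamma_0 = 1$ and $\gamma_{i+1} = q^{-1}(1 - q^{2(i+1)})\gamma_i$. Because $q$ is not a root of unity, each factor $q^{-1}(1 - q^{2(i+1)})$ is nonzero for $0 \le i \le d-1$, so every $\gamma_i$ is nonzero and $\psi$ is a bijection between bases, hence a $\K$-linear isomorphism of vector spaces.

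Next I would check that $\psi$ commutes with the $\A$-action. By Lemma \ref{lem:nugenset}, $\A$ is generated by $\nu_x, \nu_y, \nu_z$, so it suffices to verify the intertwining relation $\psi(\eta v_i) = \eta \psi(v_i)$ for $\eta \in \{\nu_x, \nu_y, \nu_z\}$ and all $0 \le i \le d$. The action of these generators on $\{v_i\}_{i=0}^d$ is recorded in Theorem \ref{thm:actions} (for $\nu_x, \nu_y$) and in Lemma \ref{lem:otheractions}(iii) (for $\nu_z$), while their action on $\{u_i\}_{i=0}^d$ is recorded in Lemma \ref{lem:uqsl2actions1}. For $\nu_x$, one side gives $\psi(\nu_x v_i) = \psi(v_{i+1}) = \gamma_{i+1} u_{i+1}$, and the other gives $\nu_x \psi(v_i) = \gamma_i \nu_x u_i = \gamma_i q^{-1}(1 - q^{2(i+1)}) u_{i+1}$; these agree precisely because of the defining recurrence $\gamma_{i+1} = q^{-1}(1 - q^{2(i+1)})\gamma_i$, which is exactly how $\gamma_i$ was chosen. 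The scalars $\gamma_i$ are therefore forced by the requirement that $\nu_x$ be intertwined.

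The remaining verifications reduce to scalar identities. For $\nu_y$, comparing Theorem \ref{thm:actions} with Lemma \ref{lem:uqsl2actions1}(ii) amounts to checking the ratio identity
\[
(q^{2i}-1)(q^{2(i-d-1)}-1)\,\gamma_{i-1} = q(1-q^{2(i-d-1)})\,\gamma_i,
\]
which follows by substituting $\gamma_i = q^{-1}(1-q^{2i})\gamma_{i-1}$ and simplifying. For $\nu_z$, comparing Lemma \ref{lem:otheractions}(iii) with Lemma \ref{lem:uqsl2actions1}(iii), the middle (diagonal) coefficients match outright, while the off-diagonal coefficients require the same type of $\gamma$-ratio cancellation, again handled by the recurrence. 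I do not anticipate a serious obstacle: the only real content is that the single recurrence defining $\gamma_i$ simultaneously reconciles all three generators, and this is guaranteed because $V$ and $L(d)$ satisfy the same relations in $\A$. The main point to be careful about is bookkeeping of the $q$-powers in the $\nu_y$ and $\nu_z$ comparisons, ensuring that the index shifts in $\gamma_{i\pm 1}/\gamma_i$ produce exactly the factors appearing in Lemma \ref{lem:uqsl2actions1}. Once all three generator relations are confirmed, $\psi$ is an $\A$-module isomorphism and the proof is complete.
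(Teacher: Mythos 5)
Your proposal is correct and follows essentially the same route as the paper: define $\phi(v_i)=\gamma_i u_i$, note it is a linear isomorphism since $q$ is not a root of unity, and verify the intertwining relation on the generators $\nu_x,\nu_y,\nu_z$ by comparing Theorem \ref{thm:actions} and Lemma \ref{lem:otheractions}(iii) with Lemma \ref{lem:uqsl2actions1}. The scalar identities you outline (including the $u_{i\pm1}$ coefficient comparisons for $\nu_z$) are exactly the computations carried out in the paper's proof.
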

\begin{proof}
Let $\phi : V \to L(d)$ be the map defined by $v_i \mapsto \gamma_i u_i$. Since $q$ is not a root of unity, $\phi$ is a vector space isomorphism. To show that $\phi$ is an $\A$-module isomorphism, it suffices to show that $\phi \nu_\eta = \nu_\eta \phi$ for all $\eta \in \{x, y, z\}$.

\medskip
By Lemma \ref{lem:uqsl2actions1} and Theorem \ref{thm:actions},
\begin{align*}
    \phi \nu_x v_i &= \phi v_{i+1} = \gamma_{i+1}u_{i+1} = \gamma_{i} q^{-1}(1 - q^{2(i+1)}) u_{i+1},\\
    \nu_x \phi v_i &= \gamma_i \nu_x u_i =  \gamma_i q^{-1}(1 - q^{2(i+1)}) u_{i+1}.
\end{align*}
Thus $\phi \nu_x = \nu_x \phi$.

\medskip
By Lemma \ref{lem:uqsl2actions1} and Theorem \ref{thm:actions},
\begin{align*}
\phi \nu_y v_i &= (q^{2i}-1)(q^{2(i-d-1)}-1)\phi v_{i-1} = \gamma_{i-1} (q^{2i}-1)(q^{2(i-d-1)}-1) u_{i-1},\\
\nu_y \phi v_i &= \gamma_i \nu_y v_i = \gamma_i q(1-q^{2i-d-1}) u_{i-1} = \gamma_{i-1} (q^{2i}-1)(q^{2(i-d-1)}-1) u_{i-1}.
\end{align*}
Thus $\phi \nu_y = \nu_y \phi$.

\medskip
By Lemma \ref{lem:uqsl2actions1} and Lemma \ref{lem:otheractions}, $\phi \nu_z v_i$ is a linear combination of $u_{i-1}, u_{i}, u_{i+1}$ with the following coefficients:
\begin{center}
\begin{tabular}{c|l}
{\rm term} & {\rm coefficient}\\
\hline
$u_{i-1}$ & $\gamma_{i-1}(q^{-2i}-1)(q^{-2(i-d-1)}-1)$\\
$u_{i}$ & $\gamma_{i}(q^{2d-2i+1}+q^{-2i-1}-q^{2d-4i+1}-q^{2d-4i-1})$\\
$u_{i+1}$ & $\gamma_{i+1}q^{2d-4i-2}$
\end{tabular}
\end{center}
and $\nu_z \phi v_i$ is a linear combination of $u_{i-1}, u_{i}, u_{i+1}$ with the following coefficients:
\begin{center}
\begin{tabular}{c|l}
{\rm term} & {\rm coefficient}\\
\hline
$u_{i-1}$ & $\gamma_i q^{2d-4i+3}(1-q^{2(i-d-1)})$\\
$u_{i}$ & $\gamma_i (q^{2d-2i+1}+q^{-2i-1}-q^{2d-4i+1}-q^{2d-4i-1})$\\
$u_{i+1}$ & $\gamma_i q^{2d-4i-3}(1-q^{2(i+1)})$
\end{tabular}
\end{center}
Using $\gamma_{i+1}/\gamma_i = q^{-1}(1 - q^{2(i+1)})$, we see that these coefficients are equal. Thus $\phi \nu_z = \nu_z \phi$. The result follows.
\end{proof}

\begin{corollary}\label{cor:unique}
For $d \in \N$, up to isomorphism there exists a unique irreducible $\A$-module of dimension $d+1$.
\end{corollary}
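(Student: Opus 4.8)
The plan is to deduce both existence and uniqueness directly from the results already established in this section, since the corollary is essentially a repackaging of the preceding theorem. For existence, I would observe that the $\A$-module $L(d)$ from Definition~\ref{def:Ld} has dimension $d+1$, because $\{u_i\}_{i=0}^d$ is a basis for it, and that $L(d)$ is irreducible by the lemma asserting this at the end of the previous section. Thus $L(d)$ is an irreducible $\A$-module of the required dimension $d+1$, settling existence.

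For uniqueness, I would take an arbitrary finite-dimensional irreducible $\A$-module $V$ with $\dim V = d+1$ and show that $V \cong L(d)$. The key step is to apply the construction of Notation~\ref{not:setup} to this particular $V$: by Lemma~\ref{lem:commonevec} there is a nonzero $v_0$ with $\nu_y v_0 = 0$ that is a common eigenvector for $z^2$ and $\nu_y\nu_x$, and setting $v_i = \nu_x^i v_0$ produces a maximal index $d'$ with $v_{d'} \ne 0$ (finite since $\nu_x$ acts nilpotently by Lemma~\ref{lem:nilpotent}).

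The crucial bookkeeping point is to identify the intrinsic parameter $d'$ coming from Notation~\ref{not:setup} with the given dimension parameter. The lemma showing that $\{v_i\}_{i=0}^{d'}$ is a basis for $V$ gives $\dim V = d'+1$, and comparing with the hypothesis $\dim V = d+1$ forces $d' = d$. The theorem immediately preceding this corollary then supplies an explicit $\A$-module isomorphism $V \to L(d')$, and since $d' = d$ this yields $V \cong L(d)$. Combining this with the existence statement shows that $L(d)$ is, up to isomorphism, the unique irreducible $\A$-module of dimension $d+1$.

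I do not expect any real obstacle here: the entire content has been carried by the earlier lemmas and the preceding theorem, and the only thing meriting a moment's care is the matching of $d'$ with $d$ via the basis lemma. Once that identification is made, the corollary follows formally.
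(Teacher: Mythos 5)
Your proposal is correct and follows exactly the route the paper intends: the paper states this corollary without proof because it is immediate from the preceding theorem (any finite-dimensional irreducible $\A$-module is isomorphic to $L(d)$ for its intrinsic parameter $d$) together with the earlier lemma that $L(d)$ is irreducible of dimension $d+1$. Your explicit matching of the parameter $d'$ from Notation \ref{not:setup} with the given dimension via the basis lemma is precisely the bookkeeping the paper leaves implicit, so there is nothing to add.
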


\begin{theorem}
Let $V$ denote an irreducible $\A$-module of dimension $d+1$.
\begin{enumerate}
\item[{\rm (i)}] If {\rm char} $\K = 2$, then $V$ extends to a unique irreducible $U_q(\mathfrak{sl}_2)$-module.
\item[{\rm (ii)}] If {\rm char} $\K \ne 2$, then $V$ extends to two non-isomorphic irreducible $U_q(\mathfrak{sl}_2)$-modules.
\end{enumerate}
\end{theorem}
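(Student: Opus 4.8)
The plan is to take an irreducible $\A$-module $V$ of dimension $d+1$, which by Corollary \ref{cor:unique} we may identify with $L(d)$, and count the ways to promote its $\A$-action to a $U_q(\mathfrak{sl}_2)$-action. The key observation is that a $U_q(\mathfrak{sl}_2)$-module structure extending the given $\A$-action amounts to choosing actions of the equitable generators $x, y, z$ (with $y$ invertible) satisfying relations (\ref{eq:equit1})--(\ref{eq:equit3}), subject to the constraint that the induced actions of $\nu_x,\nu_y,\nu_z$ (equivalently of $x^2, y^2, z^2$) agree with the already-fixed $\A$-action on $V=L(d)$. So the strategy is: first pin down how much freedom there is in choosing $x, y, z$, then show this freedom is exactly a sign choice $\epsilon\in\{1,-1\}$ when $\mathrm{char}\,\K\ne 2$ and is trivial when $\mathrm{char}\,\K=2$.

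First I would use the basis $\{v_i\}_{i=0}^d$ of $V$ from Notation \ref{not:setup}, on which $z^2$ acts diagonally with distinct eigenvalues $q^{4i-2d}$ (Theorem \ref{thm:actions}). Any extending $z$ must commute with $z^2$ in the sense that $z$ is a square root of the fixed operator $z^2$; since $z^2$ is diagonalizable with distinct nonzero eigenvalues $q^{4i-2d}$, the operator $z$ must be diagonal in the same basis, with $z v_i = \zeta_i v_i$ where $\zeta_i^2 = q^{4i-2d}$. Comparing with Lemma \ref{lem:uqsl2modules}, where $\epsilon z u_i = q^{2i-d} u_i$, one sees $\zeta_i = \pm q^{2i-d}$, and the equitable relation (\ref{eq:equit3}) relating $z$ and $x$ (together with the relation between $x^2$ and $z^2$) forces all the signs $\zeta_i/q^{2i-d}$ to be equal to a single global sign $\epsilon$. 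Thus $z$ is determined up to an overall sign, and once $\epsilon$ is chosen, relations (\ref{eq:equit2}), (\ref{eq:equit3}) together with invertibility of $y$ and the fixed actions of $x^2,y^2$ determine $x$ and $y$ uniquely (this is essentially reading Lemma \ref{lem:uqsl2modules} backwards). This gives the two candidate extensions $L(d,1)$ and $L(d,-1)$.

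The main steps are then: (1) show that each sign $\epsilon$ yields a genuine $U_q(\mathfrak{sl}_2)$-module structure extending the $\A$-action, which is exactly the content of Lemma \ref{lem:uqsl2modules} together with Lemma \ref{lem:isoAmods} asserting that both restrict to the same $\A$-module $L(d)$; (2) show no other extension exists, via the rigidity argument above that $z$ is forced to be diagonal with entries $\pm q^{2i-d}$ and the signs are globally coupled; and (3) decide when the two extensions are isomorphic as $U_q(\mathfrak{sl}_2)$-modules. For (3) I would invoke the classification in Lemma \ref{lem:uqsl2modules}: $L(d,1)$ and $L(d,-1)$ are non-isomorphic $U_q(\mathfrak{sl}_2)$-modules whenever $\mathrm{char}\,\K\ne 2$ (they are distinct members of the family (\ref{eq:Lde})), whereas when $\mathrm{char}\,\K=2$ the set $\{1,-1\}$ collapses to $\{1\}$, so there is only one module and the two sign choices coincide. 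Combining (1)--(3) gives precisely statement (i) in characteristic $2$ and statement (ii) otherwise.

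The step I expect to be the main obstacle is (2), the rigidity/uniqueness argument that the $\A$-action determines $x, y, z$ up to the single global sign $\epsilon$. The subtlety is that a priori the signs $\zeta_i/q^{2i-d}$ could be chosen independently at each $i$, and one must show the equitable relations rule this out; the cleanest route is to use (\ref{eq:equit3}) in the form $qzx - q^{-1}xz = (q-q^{-1})\cdot 1$ to express $x$-matrix entries in terms of $z$-eigenvalues and show that consistency with the fixed action of $x^2$ (Lemma \ref{lem:otheractions}(i)), which is off-diagonal and nonzero on the relevant superdiagonals since $q$ is not a root of unity, forces consecutive signs $\zeta_i$ to be related by a fixed ratio, hence all equal to one global sign. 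Once this coupling is established the rest is bookkeeping against Lemma \ref{lem:uqsl2modules}.
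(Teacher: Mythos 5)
Your proposal is correct, but it takes a genuinely different and much heavier route to uniqueness than the paper does. The paper's proof is three lines: identify $V\cong L(d)$ by Corollary \ref{cor:unique}; observe that any irreducible $U_q(\mathfrak{sl}_2)$-module $W$ extending $V$ has dimension $d+1$, so the classification in Lemma \ref{lem:uqsl2modules} immediately gives $W\cong L(d,1)$ or $W\cong L(d,-1)$; and note that these are isomorphic as $U_q(\mathfrak{sl}_2)$-modules exactly when ${\rm char}\,\K=2$. Thus the rigidity analysis you single out as the main obstacle is precisely the step the paper never needs: the classification you already invoke in your step (3) also settles your step (2), since an extension of $V$ is automatically an irreducible $(d+1)$-dimensional $U_q(\mathfrak{sl}_2)$-module. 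That said, your operator-level argument is sound and does go through: $z$ commutes with $z^2$, whose $d+1$ eigenvalues $q^{4i-2d}$ are distinct, so $zv_i=\zeta_i v_i$ with $\zeta_i=\pm q^{2i-d}$; relation (\ref{eq:equit3}) forces the matrix entry $x_{ji}$ to vanish unless $j=i$ (where $x_{ii}=\zeta_i^{-1}$) or $j=i-1$ with $\zeta_i=q^2\zeta_{i-1}$; and since the $v_{i-1}$-coefficient of $x^2v_i$ in Lemma \ref{lem:otheractions}(i) is nonzero for $1\le i\le d$ (because $q$ is not a root of unity), consecutive signs must agree, giving one global sign $\epsilon$. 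What your route buys is a slightly stronger statement than the paper makes: there are exactly two (one, in characteristic $2$) $U_q(\mathfrak{sl}_2)$-module structures extending the given $\A$-action on the nose, not merely two isomorphism classes, and irreducibility of the extension need not be assumed. What the paper's route buys is brevity: existence (your step (1)) and the non-isomorphism count (your step (3)) already rest on Lemma \ref{lem:uqsl2modules}, and once that lemma is cited, uniqueness comes for free from the dimension count.
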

\begin{proof}
By Corollary \ref{cor:unique}, $V$ is isomorphic to the $\A$-module $L(d)$ from Definition \ref{def:Ld}. Thus the $U_q(\mathfrak{sl}_2)$-modules $L(d, 1)$ and $L(d,-1)$ from Lemma \ref{lem:uqsl2modules} extend $V$. Let $W$ be an irreducible  $U_q(\mathfrak{sl}_2)$-module that extends $V$. Then $W$ has dimension $d+1$, so by Lemma \ref{lem:uqsl2modules}, $W$ is isomorphic to $L(d,1)$ or $L(d,-1)$.  Therefore, up to isomorphism, $L(d,1)$ and $L(d,-1)$ are the unique irreducible $U_q(\mathfrak{sl}_2)$-modules that extend $V$. Observe that $L(d,1)$ and $L(d,-1)$ are isomorphic as $U_q(\mathfrak{sl}_2)$-modules if and only if {\rm char} $\K = 2$, so the result follows.
\end{proof}

\pagebreak 

\section{Appendix}\label{sec:appendix}

In this appendix, we give the explicit reduction rules used in the proof of Lemma \ref{lem:spanningset}. For each forbidden $\A^\prime$-word $w$ of length 2, the reduction rule for $w$ is an equation that expresses $w$ as a linear combination of allowed $\A^\prime$-words of length 0, 1, 2. For each forbidden $\A^\prime$-word $g_1 g_2$, the linear combination given by the reduction rule contains exactly one allowed $\A$-prime-word of length 2, which we denote by $\tilde{g}_1 \tilde{g}_2$.

\

\begin{tabular}{c|l|c}
Forbidden word $g_1g_2$ & Reduction rule for $g_1g_2$ & $\tilde{g}_1 \tilde{g}_2$\\
\hline
$\nu_x^2$ & $\nu_x^2 = q^4y^2 z^2  + q^{2}(q+q^{-1})\nu_x - q^4$ & $y^2 z^2$\\
$\nu_x \nu_y$ & $\nu_x \nu_y = -q^{-1}\nu_z z^2 + q^{-2} z^2  + q^{-3} \nu_x + q\nu_y-q^{-2}$ & $\nu_z z^2$\\
$\nu_x \nu_z$ & $\nu_x \nu_z = q^2 \nu_z \nu_x + (q^2-1)y^2 - (q^2-1)$ & $\nu_z \nu_x$\\
$\nu_x x^2$ & $\nu_x x^2 = x^2 \nu_x  - (q^2-q^{-2})\nu_y + (q^2-q^{-2})\nu_z$  & $x^2 \nu_x$ \\
$\nu_x y^2$ & $\nu_x y^2 = q^4 y^2 \nu_x$ & $y^2 \nu_x$\\
\hline
$\nu_y \nu_x$ & $\nu_y \nu_x = -q \nu_z z^2 + q^2 z^2 + q^{-1} \nu_x + q^3 \nu_y - q^2$ & $\nu_z z^2$\\
$\nu_y^2$ & $\nu_y^2 = q^{-4}x^2z^2 + q^{-2}(q+q^{-1})\nu_y - q^{-4}$ & $x^2z^2$ \\
$\nu_y \nu_z$ & $\nu_y \nu_z = -q^{-1}x^2 \nu_x + q^{-2} x^2+ q \nu_y + q^{-3}\nu_z  - q^{-2}$ & $x^2 \nu_x$\\
$\nu_y x^2$ & $\nu_y x^2 = q^{-4}x^2 \nu_y$ & $x^2 \nu_y$ \\
$\nu_y y^2$ & $\nu_y y^2 = -q \nu_z \nu_x + q^{-1} y^2  + q^{-2} \nu_z + q^{2} \nu_x - q^{-1}$ & $\nu_z \nu_x$\\
\hline
$\nu_z \nu_y$ & $\nu_z \nu_y = -qx^2 \nu_x  + q^2 x^2  + q^3 \nu_y + q^{-1}\nu_z  - q^2 $ & $x^2 \nu_x$\\
$\nu_z^2$ & $\nu_z^2 = q^4 x^2 y^2 + q^2 (q+q^{-1})\nu_z - q^4$ & $x^2 y^2$\\
$\nu_z x^2$ & $\nu_z x^2 = q^4 x^2 \nu_z $ & $x^2 \nu_z$\\
\hline
$y^2 \nu_y$ & $y^2 \nu_y = -q \nu_z \nu_x + q^{-1}y^2+ q^2\nu_z + q^{-2} \nu_x  - q^{-1}$ & $\nu_z \nu_x$ \\
$y^2 \nu_z$ & $y^2 \nu_z = q^4 \nu_z y^2$ & $\nu_z y^2$\\
$y^2 x^2$ & $y^2 x^2 = q^8 x^2 y^2 + (q^6-q^{2})(q+q^{-1})\nu_z +(1 - q^8)$ & $x^2 y^2$\\
\hline
$z^2 \nu_x$ & $z^2 \nu_x = q^4 \nu_x z^2$ & $\nu_x z^2$\\
$z^2 \nu_y$ & $z^2 \nu_y = q^{-4}\nu_y z^2$ & $\nu_y z^2$\\
$z^2 \nu_z$ & $z^2 \nu_z = \nu_z z^2 + (q^2 - q^{-2})\nu_x - (q^2 - q^{-2})\nu_y$ & $\nu_z z^2$\\
$z^2 x^2$ & $z^2 x^2 = q^{-8}x^2 z^2 + (q^{-6}-q^{-2})(q+q^{-1})\nu_y + (1-q^{-8})$ & $x^2 z^2$\\
$z^2 y^2$ & $z^2 y^2 = q^8y^2 z^2  + (q^{6}-q^{2})(q+q^{-1})\nu_x + (1 - q^8)$ & $y^2 z^2$
\end{tabular}

\section{Acknowledgements}

This paper was written while the author was a graduate student at the University of Wisconsin-Madison. The author would like to thank her advisor, Paul Terwilliger, for offering many valuable ideas and suggestions.

\bibliography{Positive_even_subalgebra_of_Uqsl2}{}
\bibliographystyle{plain}
\end{document}